\definecolor{black}{RGB}{0,0,0}
\definecolor{a}{RGB}{0,150,0}
\definecolor{b}{RGB}{150,0,0}
\definecolor{c}{RGB}{0,0,150}
\definecolor{d}{RGB}{180,80,0}
\newcounter{theorem}
\newtheorem{thm}[theorem]{Theorem}
\newtheorem{lemma}[theorem]{Lemma}
\newtheorem{prop}[theorem]{Proposition}
\newtheorem{cor}[theorem]{Corollary}
\newtheorem{conjecture}[theorem]{Conjecture}
\newtheorem{claim}[theorem]{Claim}
\newtheorem{property}[theorem]{Property}
\theoremstyle{remark}
\newtheorem*{remark*}{Remark}
\newtheorem{remark}[theorem]{Remark}
\theoremstyle{definition}
\newtheorem{defn}[theorem]{Definition}
\newtheorem{notation}[theorem]{Notation}
\numberwithin{equation}{section}
\numberwithin{theorem}{section}
\renewcommand{\setminus}{\backslash}
\renewcommand{\emptyset}{\varnothing}
\newcommand{\id}{\mathrm{id}}
\newcommand{\Q}{\mathcal Q}
\newcommand{\leb}{\mathrm{Leb}}
\title[Quasidiagonality of nuclear $\mathrm{C}^{*}$-algebras]{Quasidiagonality of \\ nuclear C$^{*}$-algebras}
\begin{document}

\author[A.\ Tikuisis]{Aaron Tikuisis}
\address{\hskip-\parindent Aaron Tikuisis, Institute of Mathematics, School of Natural and Computing Sciences, University of Aberdeen, AB24 3UE, Scotland.}
\email{a.tikuisis@abdn.ac.uk}
\author[S.\ White]{Stuart White}
\address{\hskip-\parindent Stuart White, School of Mathematics and Statistics, University of Glasgow, Glasgow, G12 8QW, Scotland and Mathematisches Institut der WWU M\"unster, Einsteinstra\ss{}e 62, 48149 M\"unster, Deutschland.}
\email{stuart.white@glasgow.ac.uk}
\author[W.\ Winter]{Wilhelm Winter}
\address{\hskip-\parindent
Wilhelm Winter, Mathematisches Institut der WWU M\"unster, Einsteinstra\ss{}e 62, 48149 M\"unster, Deutschland.}
\email{wwinter@uni-muenster.de}
\thanks{Research partially supported by EPSRC (EP/N002377), NSERC (PDF, held by AT), by an Alexander von Humboldt foundation fellowship (held by SW) and by the DFG (SFB 878).}

\date{\today}

\maketitle

\begin{abstract}
We prove that faithful traces on separable and nuclear $\mathrm{C}^*$-algebras in the UCT class are quasidiagonal.  This has a number of consequences.  Firstly, by results of many hands, the classification of unital, separable, simple and nuclear $\mathrm{C}^*$-algebras of finite nuclear dimension which satisfy the UCT is now complete. Secondly, our result links the finite to the general version of the Toms--Winter conjecture in the expected way and hence clarifies the relation between decomposition rank and nuclear dimension. Finally, we confirm the Rosenberg conjecture: discrete, amenable groups have quasidiagonal $\mathrm{C}^*$-algebras. 
\end{abstract}

\renewcommand*{\thetheorem}{\Alph{theorem}}
\section*{Introduction}

\noindent Quasidiagonality was first introduced by Halmos for sets of operators on Hilbert space; see \cite[Section 4]{Hal:BAMS}. An abstract $\mathrm{C}^{*}$-algebra is called quasidiagonal if it has a faithful representation that is quasidiagonal, i.e., for which there is an approximately central net of finite rank projections converging  strongly to the unit (see \cite{Thayer,V:IEOT}, or \cite{Bro:survey}, or \cite[Chapter 7]{BrOz}). In \cite{Voi:Duke}, Voiculescu characterised quasidiagonality in terms of the existence of almost isometric and almost multiplicative completely positive contractive (c.p.c.) maps into finite dimensional $\mathrm{C}^{*}$-algebras. It was observed in  \cite[2.4]{V:IEOT} that unital quasidiagonal $\mathrm{C}^*$-algebras always have traces (see also \cite[Proposition 7.1.6]{BrOz}). Traces which can be witnessed by quasidiagonal approximations are called quasidiagonal (see Definition~\ref{def:QDTraces} below); these were introduced and systematically investigated in \cite{B:MAMS}.

In Hadwin's paper \cite{Hadwin:JOT} quasidiagonality was linked to nuclearity of $\mathrm{C}^{*}$-algebras. The latter can be expressed in terms of c.p.c.\ approximations through finite dimensional $\mathrm{C}^{*}$-algebras; it is in many respects analogous to amenability for discrete groups. This connection was further exploited in \cite{BlaKir:MathAnn}; it also features prominently in Elliott's programme to classify simple nuclear $\mathrm{C}^{*}$-algebras by $K$-theoretic data. 

In \cite{RS:DMJ}, Rosenberg and Schochet established what they called the universal coefficient theorem (UCT), relating Kasparov's bivariant $KK$-theory to homomorphisms between $K$-groups. They showed that the UCT holds for all separable nuclear $\mathrm{C}^*$-algebras which are $KK$-equivalent to abelian ones.  It remains an important open problem dating back to \cite{RS:DMJ} whether this class in fact contains all separable nuclear $\mathrm{C}^{*}$-algebras. At first sight the UCT does not seem related to quasidiagonality, but one link was noted by L.~Brown in his work \cite{B:OAGR} on universal coefficient theorems for $\mathrm{Ext}$; see \cite[2.3]{V:IEOT}.  Our main result provides a new connection.

\begin{thm}
\label{thm:MainThm}
Let $A$ be a separable, nuclear $\mathrm C^*$-algebra which satisfies the UCT.
Then every faithful trace on $A$ is quasidiagonal.
\end{thm}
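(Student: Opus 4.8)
The natural strategy is to reduce the general statement to a concrete, tractable model case and then transport quasidiagonality along a $KK$-equivalence. First I would observe that a faithful trace $\tau$ on $A$ gives rise to a unital inclusion of $A$ (or rather $A^\sim$) into the tracial von Neumann algebra $\pi_\tau(A)''$, and that the obstruction to quasidiagonality of $\tau$ lives at the level of the induced map on $KK$-theory / $K$-homology: one wants to show that the class of $\tau$ in $KL(A, \mathbb{R})$ (or an appropriate total-invariant refinement) can be realized by a genuine $*$-homomorphism into an ultrapower of a UHF algebra, or into a suitable sequence algebra. The UCT is what makes this $K$-theoretic obstruction the \emph{only} obstruction.

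**Reduction via the cone and the model algebra.** Concretely, I expect the argument to run as follows. One fixes a UHF algebra $\mathcal Q$ of infinite type (the universal one) with its unique trace $\tau_{\mathcal Q}$, and one aims to produce, for the given faithful $\tau$ on $A$, a sequence of c.p.c.\ maps $\phi_n \colon A \to \mathcal Q$ that are approximately multiplicative, approximately isometric \emph{in the $\|\cdot\|_{2,\tau_{\mathcal Q}}$-seminorm}, and asymptotically trace-preserving, i.e.\ $\tau_{\mathcal Q}\circ\phi_n \to \tau$ pointwise. Since $A$ is nuclear, the approximate multiplicativity/isometry can be arranged from a point-norm approximation provided one has a single \emph{exact} unital trace-preserving embedding $A \hookrightarrow \prod \mathcal Q \big/ \bigoplus \mathcal Q$ (a sequence-algebra, or ultrapower, version); then a diagonal/reindexing argument extracts the $\phi_n$. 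So the crux is: construct a unital $*$-homomorphism $A \to \mathcal Q_\omega$ (the ultrapower) lifting the trace. For that one uses the UCT together with a classification-type uniqueness/existence result for maps into $\mathcal Q_\omega$: maps $A \to \mathcal Q_\omega$ are classified up to (approximate) unitary equivalence by their $KL$-class plus tracial data, and because $\mathcal Q_\omega$ has the "right" $K$-theory and trace space, every admissible invariant — in particular the one determined by $\tau$ — is attained. Here one needs to know $KK(A,\mathcal Q_\omega)$ and the induced pairing with traces are computable via the UCT, and that there is an existence theorem producing a homomorphism with prescribed invariant; this is the step that genuinely uses "UCT", via $KK(A, \cdot)$ being an Ext-by-Hom extension.

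**The hard part.** I expect the main obstacle to be precisely the \emph{existence} half: producing an honest $*$-homomorphism $A \to \mathcal Q_\omega$ (equivalently, a quasidiagonal sequence of c.p.\ maps into matrix algebras) realizing the class of $\tau$, rather than just an element of $KK$ or a c.p.\ map that is multiplicative only modulo a trace-small error. The subtlety is that quasidiagonality requires control of the \emph{operator-norm} defect of multiplicativity on a compressed copy, not merely the $2$-norm defect; bridging from the von Neumann algebra picture (where $2$-norm control is automatic) back to a norm-quasidiagonal approximation is where one must work. I anticipate the resolution uses a stably-finite / quasidiagonal structure already present on the target — e.g.\ that $\mathcal Q_\omega$, or the relevant cone/suspension model, is itself quasidiagonal and has enough projections — combined with a careful $K$-theoretic bookkeeping (possibly passing through $A\otimes\mathcal Q$, or through a mapping-cone exact sequence) so that the UCT can be invoked to lift. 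A secondary technical point will be handling the non-unital and non-simple cases and the passage from a faithful trace to a faithful \emph{representation} witnessing quasidiagonality, but those should be formal once the model case is in hand.
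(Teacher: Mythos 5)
Your proposal correctly isolates the target (a unital trace-preserving $^*$-homomorphism $A\to\Q_\omega$, as in Proposition~\ref{Prop.EasyQDT}~(\ref{EasyQDT})~(\ref{EasyQDT.2})) and correctly locates the difficulty (passing from $2$-norm control coming out of $\pi_\tau(A)''$ back to operator-norm multiplicativity) and the role of the UCT (a $K$-theoretic obstruction is the only obstruction). However, the crucial step --- ``construct a unital $^*$-homomorphism $A\to\Q_\omega$ lifting the trace'' by invoking a ``classification-type uniqueness/existence result for maps into $\Q_\omega$'' --- is essentially circular. Existence theorems of that shape (maps into $\Q_\omega$ with prescribed $KL$-class and tracial data) are, in the required generality, either unavailable or themselves hinge on quasidiagonality of traces as a hypothesis; that such a homomorphism exists is precisely what the theorem must produce, so it cannot be imported as a black box.

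The paper gets around this by \emph{not} trying to produce a $^*$-homomorphism on $A$ directly. The concrete starting input, which your sketch does not identify, is the order-zero tracial result of \cite{SWW:Invent} (ultimately Connes' uniqueness of the injective II$_1$ factor): it gives a c.p.c.\ order zero map $\Omega:A\to\Q_\omega$ realizing $\tau_A$, which in turn gives two honest $^*$-homomorphisms on the \emph{cones}, $\grave\Phi:C_0([0,1),A)\to\Q_\omega$ and $\acute\Phi:C_0((0,1],A)\to\Q_\omega$, both inducing $\tau_{\leb}\otimes\tau_A$ and with a common scalar part $\Theta:C([0,1])\to\Q_\omega$ (Lemma~\ref{MapsExist}). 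This is the ``2-coloured'' structure. The second missing ingredient is the mechanism for welding the two cones into a map on $A$: a ``stable uniqueness across the interval'' argument, in which $[0,1]$ is cut into $N=2n$ overlapping subintervals, a controlled variant of the Dadarlat--Eilers stable uniqueness theorem (Theorem~\ref{thm:StableUniqueness}, where the UCT enters to control the $KK$-classes in a sequence-of-counterexamples argument) is applied on each middle third to interpolate between $\grave\Phi^{\oplus(N-i+1)}\oplus\acute\Phi^{\oplus(i-1)}$ and $\grave\Phi^{\oplus(N-i)}\oplus\acute\Phi^{\oplus i}$, and a patching lemma (Lemma~\ref{lem:Patching}) plus a partition of unity produces the approximately multiplicative c.p.\ map $\Psi:A\to\Q_\omega\otimes M_{2N}$ satisfying~(\ref{EasyQDT.3}). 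Your sketch gestures at cones, mapping cones, and $A\otimes\Q$, but the interval-patching argument --- the actual engine --- is absent, so the proposal as written would not close the gap it correctly identifies.
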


\bigskip

Before outlining the proof at the end of this introduction, let us describe some consequences.  Blackadar and Kirchberg in \cite{BlaKir:MathAnn} analysed the relation between quasidiagonality and other approximation properties, introducing the notions of generalised inductive limits and of MF and NF algebras. They asked whether every stably finite nuclear $\mathrm{C}^{*}$-algebra is quasidiagonal (now known as the Blackadar--Kirchberg problem). As the existence of a faithful quasidiagonal trace entails quasidiagonality, Theorem \ref{thm:MainThm} can be used to show that many nuclear $\mathrm{C}^{*}$-algebras are quasidiagonal. A variety of natural constructions of stably finite unital nuclear $\mathrm{C}^*$-algebras, such as the reduced group $\mathrm{C}^*$-algebra of a discrete group, come readily equipped with a faithful trace, but in general one needs Haagerup's spectacular theorem (\cite{Haa:quasitraces}) to obtain a trace on stably finite unital nuclear $\mathrm{C}^*$-algebra; when the algebra is simple every trace must be faithful. With this, we obtain:
\begin{cor}
\label{cor:QD}
Every separable, nuclear $\mathrm{C}^{*}$-algebra in the UCT class with a faithful trace is quasidiagonal. In particular, the Blackadar--Kirchberg problem has an affirmative answer for simple $\mathrm{C}^*$-algebras satisfying the UCT.
\end{cor}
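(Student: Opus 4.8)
The plan is to obtain both assertions of Corollary~\ref{cor:QD} from Theorem~\ref{thm:MainThm} by combining it with one elementary observation --- that a \emph{faithful} quasidiagonal trace already witnesses quasidiagonality of the ambient algebra --- and, for the simple case, with Haagerup's theorem on quasitraces \cite{Haa:quasitraces}. Granting the observation, the first assertion is immediate: if $A$ is separable, nuclear, satisfies the UCT and carries a faithful trace $\tau$, then $\tau$ is quasidiagonal by Theorem~\ref{thm:MainThm}, and faithfulness upgrades this to quasidiagonality of $A$.

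For the observation, suppose $\tau$ is a faithful quasidiagonal trace on a separable $\mathrm{C}^*$-algebra $A$. By Definition~\ref{def:QDTraces} there are c.p.c.\ maps $\phi_n\colon A\to M_{k(n)}$ which are asymptotically multiplicative and satisfy $\mathrm{tr}_{k(n)}\circ\phi_n\to\tau$ pointwise; by Voiculescu's criterion \cite{Voi:Duke} it suffices to show that the $\phi_n$ are also asymptotically isometric. Each $\phi_n$ is positive and contractive, hence $*$-preserving with $\mathrm{spec}(\phi_n(a))\subseteq[0,\|a\|]$ for $a\in A_+$; approximating continuous functions on $[0,\|a\|]$ vanishing at $0$ by polynomials and invoking asymptotic multiplicativity gives $\|\phi_n(g(a))-g(\phi_n(a))\|\to 0$ for such $g$. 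Fix $a\in A_+$ with $\|a\|=1$ and $\varepsilon\in(0,1)$, and choose $g\colon[0,1]\to[0,1]$ continuous with $g|_{[0,1-\varepsilon]}=0$ and $g(1)=1$. Then $g(a)\neq 0$, so $\tau(g(a))>0$ by faithfulness, whence $\mathrm{tr}_{k(n)}(g(\phi_n(a)))\to\tau(g(a))>0$; thus $g(\phi_n(a))\neq 0$ for large $n$, which forces an eigenvalue of $\phi_n(a)$ to lie in $(1-\varepsilon,1]$ and hence $\|\phi_n(a)\|>1-\varepsilon$ eventually. Combined with $\|\phi_n(a)\|\le\|a\|$ this yields $\|\phi_n(a)\|\to\|a\|$ for $a\in A_+$, and then $\|\phi_n(a)\|^2=\|\phi_n(a)^*\phi_n(a)\|\to\|a^*a\|=\|a\|^2$ for arbitrary $a$, again by asymptotic multiplicativity. (This argument is essentially contained in \cite{B:MAMS}, from which one could equally quote it.)

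For the second assertion, let $A$ be simple, separable, nuclear, stably finite and satisfy the UCT. If $A$ is unital then, being unital, exact and stably finite, it admits a tracial state by Haagerup's theorem \cite{Haa:quasitraces}; simplicity forces the associated trace-kernel ideal to be zero, so this trace is faithful, and the first assertion gives quasidiagonality. The non-unital case reduces to this one by standard arguments --- if $A$ contains a nonzero projection $p$, then $p$ is full, $pAp$ is unital, simple, nuclear, stably finite and $KK$-equivalent to $A$ (hence in the UCT class), so $pAp$, and with it $A$, is quasidiagonal; the stably projectionless case is handled along the same lines, using that such an $A$ still carries a faithful trace by simplicity --- and we do not dwell on it. The only genuinely substantial input in all of this is Theorem~\ref{thm:MainThm} itself: everything else is assembly of known results, the sole new point requiring an argument being the short bridge lemma of the previous paragraph.
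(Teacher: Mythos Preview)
Your argument is correct and follows the same route as the paper: Theorem~\ref{thm:MainThm} plus the observation that a faithful quasidiagonal trace forces quasidiagonality, together with Haagerup's theorem (strictly, Blackadar--Handelman \cite{BlaHan:dimtraces} for the existence of a quasitrace and Haagerup \cite{Haa:quasitraces} to upgrade it to a trace) in the simple stably finite case. Your direct functional-calculus proof of the bridge lemma is fine; the paper instead invokes the $\Q_\omega$-embedding characterisation of Proposition~\ref{Prop.EasyQDT} together with Remark~\ref{QDNonUnital}~(\ref{QDNonUnital3}), which is shorter given that machinery but amounts to the same thing.

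The one place that needs tightening is the stably projectionless case, which you explicitly gloss over. The phrase ``such an $A$ still carries a faithful trace by simplicity'' is not self-evident: simplicity makes any trace faithful, but the \emph{existence} of a bounded trace on a non-unital, stably projectionless, simple, nuclear $\mathrm{C}^*$-algebra is not automatic and does not follow ``along the same lines'' as the projection case. The paper handles this by passing (via \cite[Corollary~2.2]{T:MA}) to a stably isomorphic, algebraically simple, stably finite algebra and then invoking \cite{K:FI} and \cite[Corollary~2.5]{T:MA} to produce a faithful trace there; quasidiagonality then transfers back along the stable isomorphism. You should either cite these results or supply an equivalent argument rather than dismissing the case.
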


\bigskip

In \cite{Hadwin:JOT}, Hadwin asked which locally compact groups have (strongly) quasidiagonal $\mathrm{C}^{*}$-algebras. In the appendix to \cite{Hadwin:JOT}, Rosenberg showed that any countable discrete group $G$ with quasidiagonal left regular representation is ame\-nable. As pointed out in \cite[Corollary~7.1.17]{BrOz}, the argument does not really depend on the specific representation, and $G$ is amenable provided $\mathrm{C}^{*}_{\mathrm{r}}(G)$ is quasidiagonal. The converse to this statement -- is $\mathrm{C}^{*}_{\mathrm{r}}(G)$ quasidiagonal if $G$ is amenable -- has received substantial attention over the decades and is now commonly referred to as the Rosenberg conjecture; cf.\ \cite[Section 3]{V:IEOT} and \cite[Section~7.1]{BrOz}. Recently, Carri{\'o}n, Dadarlat, and Eckhardt gave a positive answer for groups which are locally embeddable into finite groups (LEF groups) in \cite{CDE:JFA}. Even more recently, Ozawa, R\o{}rdam, and Sato handled the case of elementary amenable groups and extensions of LEF groups by elementary amenable groups in \cite{ORS:GAFA}. In this paper we settle the matter, as a consequence of  Theorem~\ref{thm:MainThm}:

\begin{cor}
\label{cor:rosenberg}
Let $G$ be a discrete, amenable group.  Then $\mathrm{C}^*_{\mathrm{r}}(G)$ is quasidiagonal.
\end{cor}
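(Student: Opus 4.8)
The plan is to derive Corollary~\ref{cor:rosenberg} from Corollary~\ref{cor:QD}: for an amenable group $G$ one checks that $\mathrm{C}^*_{\mathrm{r}}(G)$ is separable (after a reduction), nuclear, lies in the UCT class, and carries a faithful trace, and then applies Corollary~\ref{cor:QD}.

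\emph{Reduction to the countable case.} A general discrete group is the directed union of its countable subgroups, and subgroups of amenable groups are amenable; since the inclusion $\mathrm{C}^*_{\mathrm{r}}(H) \hookrightarrow \mathrm{C}^*_{\mathrm{r}}(G)$ is isometric for every subgroup $H \leq G$, we have $\mathrm{C}^*_{\mathrm{r}}(G) = \varinjlim_H \mathrm{C}^*_{\mathrm{r}}(H)$. Quasidiagonality is governed by Voiculescu's local approximation property (existence, for each finite subset and each tolerance, of an almost multiplicative and almost isometric c.p.c.\ map into a matrix algebra), and since finite-dimensional $\mathrm{C}^*$-algebras are injective, such a map defined on $\mathrm{C}^*_{\mathrm{r}}(H)$ extends to $\mathrm{C}^*_{\mathrm{r}}(G)$; a finite subset of $\mathrm{C}^*_{\mathrm{r}}(G)$ is approximately contained in $\mathrm{C}^*_{\mathrm{r}}(H)$ for a suitable countable $H$, so quasidiagonality of all the $\mathrm{C}^*_{\mathrm{r}}(H)$ forces that of $\mathrm{C}^*_{\mathrm{r}}(G)$. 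We may therefore assume $G$ is countable.

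\emph{Verifying the hypotheses.} For $G$ countable amenable, $\mathrm{C}^*_{\mathrm{r}}(G)$ agrees with the full group $\mathrm{C}^*$-algebra and is separable and nuclear. The canonical tracial state $\tau$, determined by $\tau(\lambda_g) = \delta_{g,e}$, is faithful: in the left regular representation $\tau(x) = \langle x\delta_e, \delta_e\rangle_{\ell^2 G}$, and as $x$ commutes with the right regular representation, $\tau(x^*x) = 0$ gives $x\delta_g = \rho_{g^{-1}}x\delta_e = 0$ for all $g$, whence $x = 0$. Finally, amenable groups have the Haagerup approximation property, so by the Higson--Kasparov theorem they satisfy the Baum--Connes conjecture with coefficients; it follows that $\mathrm{C}^*_{\mathrm{r}}(G)$ is $KK$-equivalent to an abelian $\mathrm{C}^*$-algebra and hence lies in the UCT class (alternatively, Tu's theorem that reduced $\mathrm{C}^*$-algebras of amenable groupoids satisfy the UCT applies directly). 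All the hypotheses of Corollary~\ref{cor:QD} are in place, so $\mathrm{C}^*_{\mathrm{r}}(G)$ is quasidiagonal.

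The substantive content is entirely carried by Theorem~\ref{thm:MainThm}; the only step needing genuine care is that membership of $\mathrm{C}^*_{\mathrm{r}}(G)$ in the UCT class is not formal for arbitrary amenable $G$ and must be imported from the Baum--Connes machinery for a-T-menable groups, while the passage from countable to general $G$ relies on the stability of quasidiagonality under inductive limits with injective connecting maps.
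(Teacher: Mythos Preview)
Your proof is correct and follows essentially the same route as the paper: reduce to countable $G$ using that quasidiagonality is a local property (the paper invokes Arveson's extension theorem for the same purpose), then invoke Tu's result for the UCT, nuclearity, and faithfulness of the canonical trace to apply Theorem~\ref{thm:MainThm}/Corollary~\ref{cor:QD}. The only cosmetic difference is that you offer the Higson--Kasparov route to the UCT as an alternative, whereas the paper cites only Tu.
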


Using results from classification, we actually get the stronger statement that $\mathrm{C}^{*}_{\mathrm{r}}(G)$ is AF-embeddable if $G$ is countable, discrete, and amenable; see Corollary~\ref{AF-embeddablegps}  (see also \cite[Section 4]{V:IEOT} and \cite[Section 8]{BrOz} for a discussion of AF-embeddability and relations to quasidiagonality).

Note that the approach of \cite{ORS:GAFA} uses the full strength of Elliott's programme in the simple and monotracial case (which is remarkable, since group $\mathrm{C}^{*}$-algebras themselves are not expected to be directly accessible to classification). Our argument also has a classification component (albeit of a more basic sort), namely through the technology of stable uniqueness theorems which lie at the heart of numerous classification results such as \cite{EG:Ann}, \cite{Lin:Duke} and \cite{GLN:arXiv}. Early on, stable uniqueness theorems were derived for commutative domains in \cite{EG:Ann,EGLP:DMJ} with a definitive theorem in this direction obtained in \cite{D:KThy}.  For more general domains, they were further developed by Lin in \cite{Lin:JOT} and \cite{Lin:apprUCT} and by Dadarlat and Eilers in \cite{DE:PLMS}, of which more is said later on.

\bigskip

The first prominent entry of quasidiagonality into Elliott's classification programme was in the purely infinite case, in the proof of Kirchberg's $\mathcal{O}_{2}$-embeddability theorem \cite{Kir:ICM}, via Voiculescu's remarkable result on homotopy invariance of quasidiagonality \cite{Voi:Duke}. 

The relevance of quasidiagonality in the stably finite case goes back to Popa's article \cite{Popa:PJM}, which uses local quantisation to establish an internal approximation property for simple quasidiagonal $\mathrm{C}^{*}$-algebras with sufficiently many projections. This idea triggered Lin's definition of tracially approximately finite dimensional (TAF)	 algebras, which then made it possible to come up with the first stably finite analogue of Kirchberg--Phillips classification; see \cite{Lin:Duke}. 

The potential relevance of quasidiagonality of traces to Elliott's programme was highlighted well ahead of its time by N.~Brown in \cite[Section 6.1]{B:MAMS}. This foresight has come to fruition very recently through the entry of quasidiagonal traces in the noncommutative covering dimension calculations of \cite{BBSTWW:arXiv}.  The main result of \cite{EGLN:arXiv} (which in turn relies on \cite{EN:arXiv,R:Adv,Win:AJM} to reduce to the classification theorem of \cite{GLN:arXiv}, a result which builds on a large body of work over decades) classifies separable, unital, simple, nuclear $\mathrm{C}^*$-algebras with finite nuclear dimension (the noncommutative notion of covering dimension from \cite{WinZac:dimnuc}) in the UCT class under the additional assumption that all traces are quasidiagonal.  Theorem \ref{thm:MainThm} removes this assumption, and so completes the classification of these algebras.  Since finite nuclear dimension is a ``finitely coloured'' or higher dimensional version of being approximately finite dimensional (AF), the following corollary can be viewed as the $\mathrm{C}^*$-analogue of the classification of hyperfinite von Neumann factors.

\begin{cor}
\label{cor:Classification}
Let $A$ and $B$ be separable,  unital, simple and infinite dimensional $\mathrm{C}^*$-algebras with finite nuclear dimension which satisfy the UCT.  Then $A$ is isomorphic to $B$ if and only if $A$ and $B$ have isomorphic Elliott invariants. 
\end{cor}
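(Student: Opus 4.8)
The plan is to obtain Corollary~\ref{cor:Classification} by feeding Theorem~\ref{thm:MainThm} into the classification theory that is already in place; essentially all of the substance lies in Theorem~\ref{thm:MainThm}, and the remaining work is assembling the correct citations. The ``only if'' direction is the standard functoriality of the Elliott invariant under $*$-isomorphism, so I only need to treat the converse.

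Let $A$ be as in the statement (and likewise $B$). First I would note that finite nuclear dimension forces $A$ to be nuclear, since the approximations witnessing finite nuclear dimension yield the completely positive approximation property \cite{WinZac:dimnuc}; hence Theorem~\ref{thm:MainThm} applies to $A$. Next I would observe that on a simple unital $\mathrm{C}^*$-algebra \emph{every} tracial state $\tau$ is automatically faithful: using the trace identity, the set $\{a\in A:\tau(a^*a)=0\}$ is a closed two-sided ideal, and it does not contain the unit, so it is zero by simplicity. Combining these two observations with Theorem~\ref{thm:MainThm}, every trace on $A$ is quasidiagonal. (When $A$ is stably finite there is at least one such trace, by Haagerup's theorem that quasitraces on exact $\mathrm{C}^*$-algebras are traces \cite{Haa:quasitraces} together with the existence of a quasitracial state on a stably finite unital $\mathrm{C}^*$-algebra; when $A$ has no trace---the purely infinite case---the statement is vacuous.)

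At this point $A$, and symmetrically $B$, satisfies precisely the hypotheses of the main classification result of \cite{EGLN:arXiv}: separable, unital, simple, infinite-dimensional, of finite nuclear dimension, satisfying the UCT, and with all traces quasidiagonal. That theorem (which in turn reduces, via \cite{EN:arXiv,R:Adv,Win:AJM}, to the classification theorem of \cite{GLN:arXiv} in the finite case, with the purely infinite case handled by the Kirchberg--Phillips theorem \cite{Kir:ICM}) then upgrades any isomorphism between the Elliott invariants of $A$ and $B$ to a $*$-isomorphism $A\cong B$, which is what we want.

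The only genuine obstacle is Theorem~\ref{thm:MainThm} itself; granting it, the corollary is essentially bookkeeping. The two points deserving a sentence of care are the automatic faithfulness of traces on a simple unital $\mathrm{C}^*$-algebra---this is exactly what lets us apply Theorem~\ref{thm:MainThm}, which is stated for faithful traces---and the role of the ``infinite dimensional'' hypothesis, which keeps $A$ inside (rather than on the boundary of) the range of \cite{EGLN:arXiv}.
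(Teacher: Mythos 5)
Your proposal is correct and follows essentially the same route as the paper: reduce to the observation that traces on a simple unital $\mathrm{C}^*$-algebra are automatically faithful (so Theorem~\ref{thm:MainThm} makes all traces quasidiagonal), then invoke the dichotomy between the stably finite and purely infinite cases and feed into \cite{EGLN:arXiv}/\cite{GLN:arXiv} in the former and Kirchberg--Phillips in the latter. This matches the paper's derivation, which appears as part (iv) of Theorem~\ref{main-summary}; the paper also spells out the purely infinite vs.\ stably finite dichotomy via \cite[Theorem 5.4]{WinZac:dimnuc}, a detail you invoke implicitly.
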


We wish to point out that our contribution is just the final piece in a longtime endeavour. Also, it only affects the stably finite case of Elliott's programme -- the purely infinite case was dealt with by Kirchberg and Phillips in the 90's. We deliberately state Corollary~\ref{cor:Classification} without making this distinction to emphasise that the two strands now run completely parallel. 

In the situation of the corollary, one can extract much more refined information. For example, the $\mathrm C^*$-algebras are inductive limits of nice building block algebras with low topological dimension (subhomogeneous algebras in the stably finite case and Cuntz algebras over circles in the purely infinite situation; see \cite{R:Book}). Moreover, they have nuclear dimension 1 and decomposition rank $\infty$ (see \cite{KirWin:dr}) when they are purely infinite (under the hypotheses of the corollary, this is equivalent to the absence of traces); in the stably finite case (i.e., in the presence of traces) their decomposition rank is at most 2 (in many cases, and probably always, it is at most $1$). The nuclear dimension hypothesis in Corollary~\ref{cor:Classification} is known to hold for $\mathcal{Z}$-stable $\mathrm{C}^{*}$-algebras whose trace spaces are empty or have compact extreme boundaries; see \cite{BBSTWW:arXiv}. (Here, $\mathcal{Z}$-stability is tensorial absorption of the Jiang--Su algebra $\mathcal{Z}$ -- see \cite{JS:AJM} -- which is a $\mathrm{C}^{*}$-algebraic analogue of a von Neumann algebra being McDuff.) In the case of at most one trace we obtain a particularly clean statement, which again encapsulates both the finite and the infinite case (and which unlike Corollary~\ref{cor:Classification} does not rely on the as yet unpublished results of \cite{BBSTWW:arXiv}, \cite{EGLN:arXiv} and \cite{GLN:arXiv}):

\begin{cor}
\label{monotracial-classification}
Separable, simple, unital, nuclear  $\mathrm{C}^*$-algebras in the UCT class with at most one trace are classified up to $\mathcal{Z}$-stability by their ordered $K$-theory.  
\end{cor}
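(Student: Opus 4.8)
The plan is to reduce at once to the $\mathcal Z$-stable case and then dichotomise according to the presence of a trace, invoking Theorem~\ref{thm:MainThm} only to supply quasidiagonality to the existing classification machinery. Concretely, it suffices to show $A\otimes\mathcal Z\cong B\otimes\mathcal Z$ whenever $A$ and $B$ are as in the statement and have isomorphic ordered $K$-theory (including the class of the unit). Indeed $\mathcal Z$ is strongly self-absorbing, separable, nuclear, in the UCT class, and has a unique (hence faithful) trace, so $A\otimes\mathcal Z$ is again separable, simple, unital, nuclear and UCT, with tracial state space in natural bijection with that of $A$ (so still with at most one trace); and since $\mathcal Z$ is $KK$-equivalent to $\mathbb C$, the K\"unneth formula identifies $K_*(A\otimes\mathcal Z)$ with $K_*(A)$ compatibly with the class of the unit. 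That an isomorphism of ordered $K$-theory for $A$ and $B$ propagates to $A\otimes\mathcal Z$ and $B\otimes\mathcal Z$ needs a short argument controlling how the order changes under $\mathcal Z$-stabilisation, which I postpone.

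If $A$ has no trace, then neither does $A\otimes\mathcal Z$ (a trace on the latter would restrict to one on $A$), and, being exact, it then has no $2$-quasitrace either, by Haagerup's theorem \cite{Haa:quasitraces}. Hence $A\otimes\mathcal Z$ is not stably finite, and R{\o}rdam's dichotomy for simple, unital, $\mathcal Z$-stable $\mathrm C^*$-algebras forces it to be purely infinite; so it is a unital Kirchberg algebra in the UCT class. The Kirchberg--Phillips classification then yields $A\otimes\mathcal Z\cong B\otimes\mathcal Z$ from an isomorphism of $(K_0,[1],K_1)$, and since $K_0^+=K_0$ for simple purely infinite algebras, this is exactly the ordered $K$-theory. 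This case is complete.

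If $A$ has a trace, it is unique and faithful, and so is the trace on $A\otimes\mathcal Z$; by Theorem~\ref{thm:MainThm} applied to $A\otimes\mathcal Z$ this trace is quasidiagonal. Quasidiagonality of the trace is precisely the outstanding hypothesis in the decomposition-rank estimate of Matui and Sato, so $A\otimes\mathcal Z$ has finite decomposition rank; feeding this into the classification of simple, separable, unital, nuclear $\mathrm C^*$-algebras of finite decomposition rank in the UCT class in the monotracial case---obtained by tensoring with the universal UHF algebra $\Q$ to reach tracial rank zero, applying Lin's classification \cite{Lin:Duke}, and descending via Winter's localisation of the Elliott conjecture at $\mathcal Z$, all of which predate \cite{GLN:arXiv}, \cite{EGLN:arXiv} and \cite{BBSTWW:arXiv}---classifies $A\otimes\mathcal Z$ by its Elliott invariant. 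Finally, a monotracial simple unital exact $\mathrm C^*$-algebra has, by Haagerup's theorem together with the state-extension results of Blackadar and R{\o}rdam, a \emph{unique} state on $(K_0,K_0^+,[1])$, namely the one induced by the trace; so an isomorphism of ordered $K$-theory automatically carries trace pairing to trace pairing, and for $A\otimes\mathcal Z$ the positive cone is recovered from the trace as $\{0\}\cup\{x:\tau_*(x)>0\}$ by strict comparison. Thus the ordered $K$-theory of $A$ determines the Elliott invariant of $A\otimes\mathcal Z$ functorially, which supplies the postponed propagation step and completes the argument.

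The genuinely new input is Theorem~\ref{thm:MainThm}; the rest is assembly. The point I would be most careful about is lining up the precise quasidiagonality hypotheses across the Matui--Sato decomposition-rank estimate and the chain of classification theorems it feeds into, and confirming that each such input is available in published, pre-\cite{GLN:arXiv} form in the monotracial case. By contrast, the order bookkeeping under $\mathcal Z$-stabilisation is routine once the uniqueness of the state on $K_0$ above is invoked.
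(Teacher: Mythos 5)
Your proof is correct and follows essentially the same route as the paper: dichotomise into traceless (Kirchberg--Phillips, no need for Theorem~\ref{thm:MainThm}) and monotracial (Theorem~\ref{thm:MainThm} supplies quasidiagonality, Matui--Sato supplies the tracial-rank/decomposition-rank input, and classification is reached by localizing the Elliott conjecture at $\mathcal Z$). The only cosmetic difference is that the paper cites \cite[Theorem~5.4]{LN:Adv} directly (which it notes generalises \cite{Win:localizing}), whereas you unpack that theorem into its constituent chain through \cite{Lin:Duke} and \cite{Win:localizing}; the substance is identical, and your extra care with the K\"unneth identification and the uniqueness of the state on $K_0$ is the order bookkeeping the paper leaves implicit.
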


By classification up to $\mathcal{Z}$-stability, we mean that any isomorphism between the Elliott invariants (which reduce to ordered $K$-theory in the monotracial case and to just $K$-theory in the purely infinite situation) of $A \otimes \mathcal{Z}$ and $B \otimes \mathcal{Z}$ lifts to an isomorphism between $A \otimes \mathcal{Z}$ and $B \otimes \mathcal{Z}$. There are many important classes of C*-algebras that are automatically $\mathcal{Z}$-stable, and there are highly useful criteria to check this property. Even without $\mathcal{Z}$-stability we obtain a nontrivial statement, since one can simply tensor by $\mathcal{Z}$ and determine the isomorphism class of the resulting $\mathrm C^*$-algebra in terms of the invariant.

\bigskip

We will discuss these and other consequences of Theorem \ref{thm:MainThm} -- including its  relation to the Toms--Winter conjecture -- in greater detail in Section \ref{Structure}.
\bigskip

The rest of this introduction is devoted to an outline of the proof of Theorem \ref{thm:MainThm}, so fix a separable, unital and nuclear $\mathrm{C}^*$-algebra $A$ in the UCT class, and a faithful trace $\tau_A$ on $A$. We seek an embedding $\Psi$ of $A$ into the ultrapower, $\Q_\omega$, of the universal UHF algebra inducing $\tau_A$ (i.e.\ $\tau_{A} = \tau_{\Q_\omega}\circ\Psi$). 

Our starting point is the order zero tracial quasidiagonality result from \cite[Section 3]{SWW:Invent}, a consequence of the uniqueness of the injective II$_1$ factor. From this one obtains a kind of ``$2$-coloured quasidiagonality of $\tau_A$'', which we express in terms of homomorphisms from cones: there exist two $^*$-homomorphisms\footnote{The notation $\grave{\Phi}$ and $\acute{\Phi}$ is designed to indicate the orientation of the cones $C_0([0,1))$ and $C_0((0,1])$ respectively through the appearance of canonical positive contractions generating these cones.}   
 $\grave\Phi:C_0([0,1),A)\to\Q_\omega$ and $\acute\Phi:C_0((0,1],A)\to\Q_\omega$, which both induce the trace $x\mapsto\int_0^1 \tau_A(x(t))dt$ and whose scalar parts $C_0([0,1),\mathbb C1_A)$ and $C_0((0,1],\mathbb C1_A)$ are the restrictions of a common unital $^*$-homomor\-phism $\Theta:C([0,1])\to\Q_\omega$. The details of how to obtain these cone $^*$-homomorphisms from \cite{SWW:Invent} are the subject of Section \ref{Sect:LebCones}.
 
To construct the desired $^*$-homomorphism $\Psi$ from $\grave{\Phi}$ and $\acute{\Phi}$ we use a ``stable uniqueness across the interval argument'' (see \cite{EN:arXiv} for an argument of a similar flavour). Imagine that we can find some (presumably large) $n\in\mathbb N$ such that for any open 
interval $J\subseteq (0,1)$, 
\begin{align}
&\grave\Phi|_{C_0(J,A)}^{\oplus n}\text{ and }\grave\Phi|_{C_0(J,A)}^{\oplus (n-1)}\oplus \acute\Phi|_{C_0(J,A)}^{\oplus 1}\text{ are unitarily equivalent; and }\nonumber\\
&\acute\Phi|_{C_0(J,A)}^{\oplus n}\text{ and }\acute\Phi|_{C_0(J,A)}^{\oplus (n-1)}\oplus \grave\Phi|_{C_0(J,A)}^{\oplus 1}\text{ are unitarily equivalent,}\label{intro.ue}
\end{align}
as maps into $\Q_\omega\otimes M_n$.  Write $N:=2n$ and divide $[0,1]$ into $N$ overlapping subintervals $I_1,\dots,I_N$ as painted in Figure \ref{Fig.Intro}.\footnote{In the actual proof, we define $I_1$ and $I_N$ to be open, and introduce additional intervals $I_0,I_{N+1}$ and corresponding maps.}

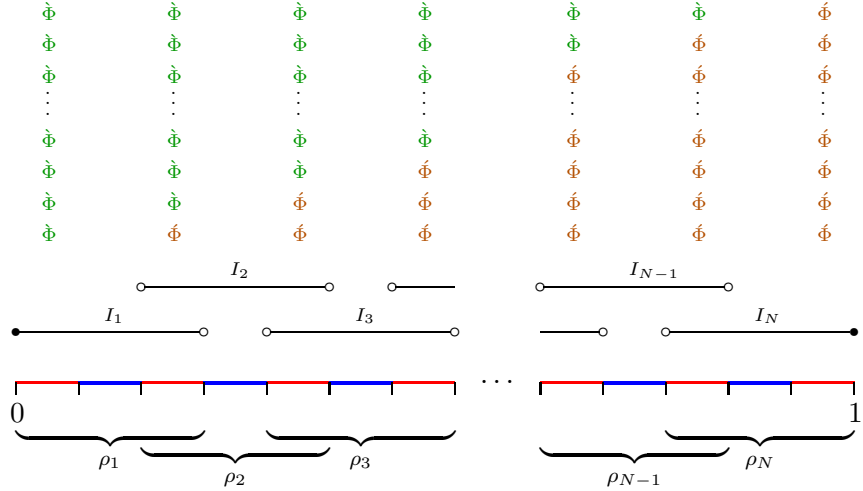
\begin{figure}[h!]
\setlength{\unitlength}{0.15mm}

\begin{picture}(855,410)
\color{red} \linethickness{.25mm}
\put(60,70){\line(1,0){55}}  
\put(170,70){\line(1,0){55}}  
\put(280,70){\line(1,0){55}}  
\put(390,70){\line(1,0){55}}  
\put(520,70){\line(1,0){55}}  
\put(630,70){\line(1,0){55}}  
\put(740,70){\line(1,0){55}}  
\color{blue} \linethickness{.4mm}
\put(115,70){\line(1,0){55}}  
\put(225,70){\line(1,0){55}}  
\put(335,70){\line(1,0){55}}  
\put(575,70){\line(1,0){55}}  
\put(685,70){\line(1,0){55}}  
\color{black} \linethickness{.15mm}
\put(60,70){\line(0,-1){10}}  
\put(115,70){\line(0,-1){10}}  
\put(170,70){\line(0,-1){10}}  
\put(225,70){\line(0,-1){10}}  
\put(280,70){\line(0,-1){10}}  
\put(335,70){\line(0,-1){10}}  
\put(390,70){\line(0,-1){10}}  
\put(445,70){\line(0,-1){10}}  
\put(520,70){\line(0,-1){10}}  
\put(575,70){\line(0,-1){10}}  
\put(630,70){\line(0,-1){10}}  
\put(685,70){\line(0,-1){10}}  
\put(740,70){\line(0,-1){10}}  
\put(795,70){\line(0,-1){10}}  
 
\put(467,70){$\dots$}  
\put(55,35){$0$}  
\put(790,35){$1$}  
 
\put(60,115){\circle*{7}}  
\put(63,115){\line(1,0){158}}  
\put(225,115){\circle{7}}  
\put(137.5,125){\tiny{$I_1$}}  
\put(170,155){\circle{7}}  
\put(173,155){\line(1,0){158}}  
\put(335,155){\circle{7}}  
\put(247.5,165){\tiny{$I_2$}}  
\put(280,115){\circle{7}}  
\put(283,115){\line(1,0){158}}  
\put(445,115){\circle{7}}  
\put(357.5,125){\tiny{$I_3$}}  
\put(390,155){\circle{7}}  
\put(393,155){\line(1,0){52}}  
\put(520,115){\line(1,0){52}}  
\put(575,115){\circle{7}}  
\put(520,155){\circle{7}}  
\put(523,155){\line(1,0){158}}  
\put(685,155){\circle{7}}  
\put(597.5,165){\tiny{$I_{N-1}$}}  
\put(630,115){\circle{7}}  
\put(633,115){\line(1,0){158}}  
\put(795,115){\circle*{7}}  
\put(707.5,125){\tiny{$I_N$}}  
\color{a}
\put(82.5,195){\tiny{$\grave\Phi$}}  
\put(82.5,223){\tiny{$\grave\Phi$}}  
\put(82.5,251){\tiny{$\grave\Phi$}}  
\put(82.5,279){\tiny{$\grave\Phi$}}  
\put(82.5,335){\tiny{$\grave\Phi$}}  
\put(82.5,363){\tiny{$\grave\Phi$}}  
\put(82.5,391){\tiny{$\grave\Phi$}}  
\color{d}
\put(192.5,195){\tiny{$\acute\Phi$}}  
\color{a}
\put(192.5,223){\tiny{$\grave\Phi$}}  
\put(192.5,251){\tiny{$\grave\Phi$}}  
\put(192.5,279){\tiny{$\grave\Phi$}}  
\put(192.5,335){\tiny{$\grave\Phi$}}  
\put(192.5,363){\tiny{$\grave\Phi$}}  
\put(192.5,391){\tiny{$\grave\Phi$}}  
\color{d}
\put(302.5,195){\tiny{$\acute\Phi$}}  
\put(302.5,223){\tiny{$\acute\Phi$}}  
\color{a}
\put(302.5,251){\tiny{$\grave\Phi$}}  
\put(302.5,279){\tiny{$\grave\Phi$}}  
\put(302.5,335){\tiny{$\grave\Phi$}}  
\put(302.5,363){\tiny{$\grave\Phi$}}  
\put(302.5,391){\tiny{$\grave\Phi$}}  
\color{d}
\put(412.5,195){\tiny{$\acute\Phi$}}  
\put(412.5,223){\tiny{$\acute\Phi$}}  
\put(412.5,251){\tiny{$\acute\Phi$}}  
\color{a}
\put(412.5,279){\tiny{$\grave\Phi$}}  
\put(412.5,335){\tiny{$\grave\Phi$}}  
\put(412.5,363){\tiny{$\grave\Phi$}}  
\put(412.5,391){\tiny{$\grave\Phi$}}  
\color{d}
\put(542.5,195){\tiny{$\acute\Phi$}}  
\put(542.5,223){\tiny{$\acute\Phi$}}  
\put(542.5,251){\tiny{$\acute\Phi$}}  
\put(542.5,279){\tiny{$\acute\Phi$}}  
\put(542.5,335){\tiny{$\acute\Phi$}}  
\color{a}
\put(542.5,363){\tiny{$\grave\Phi$}}  
\put(542.5,391){\tiny{$\grave\Phi$}}  
\color{d}
\put(652.5,195){\tiny{$\acute\Phi$}}  
\put(652.5,223){\tiny{$\acute\Phi$}}  
\put(652.5,251){\tiny{$\acute\Phi$}}  
\put(652.5,279){\tiny{$\acute\Phi$}}  
\put(652.5,335){\tiny{$\acute\Phi$}}  
\put(652.5,363){\tiny{$\acute\Phi$}}  
\color{a}
\put(652.5,391){\tiny{$\grave\Phi$}}  
\color{d}
\put(762.5,195){\tiny{$\acute\Phi$}}  
\put(762.5,223){\tiny{$\acute\Phi$}}  
\put(762.5,251){\tiny{$\acute\Phi$}}  
\put(762.5,279){\tiny{$\acute\Phi$}}  
\put(762.5,335){\tiny{$\acute\Phi$}}  
\put(762.5,363){\tiny{$\acute\Phi$}}  
\put(762.5,391){\tiny{$\acute\Phi$}}  
\color{black}
\put(85.5,307){\tiny{$\vdots$}}  
\put(195.5,307){\tiny{$\vdots$}}  
\put(305.5,307){\tiny{$\vdots$}}  
\put(415.5,307){\tiny{$\vdots$}}  
\put(545.5,307){\tiny{$\vdots$}}  
\put(655.5,307){\tiny{$\vdots$}}  
\put(765.5,307){\tiny{$\vdots$}}  
 
\put(60,30){$\underbrace{\hspace{24.75mm}}_{\rho_1}$}  
\put(170,15){$\underbrace{\hspace{24.75mm}}_{\rho_2}$}  
\put(280,30){$\underbrace{\hspace{24.75mm}}_{\rho_3}$}  
\put(520,15){$\underbrace{\hspace{24.75mm}}_{\rho_{N-1}}$}  
\put(630,30){$\underbrace{\hspace{24.75mm}}_{\rho_N}$}  

\end{picture}
\caption{Stable uniqueness across the interval}\label{Fig.Intro}
\end{figure}

In this way, on each closed blue middle third of $I_i$
\begin{equation}\label{e0.2}
\grave\Phi|^{\oplus (N-i+1)}_{\ldots}\oplus \acute\Phi|^{\oplus (i-1)}_{\ldots}\text{ and }\grave\Phi|^{\oplus (N-i)}_{\ldots}\oplus \acute\Phi|^{\oplus i}_{\ldots}
\end{equation}
are unitarily equivalent as maps into $\Q_\omega\otimes M_N$ (using either the first or second line of \eqref{intro.ue}, depending on whether $i \leq n$ or not). From this unitary equivalence, we can ``patch'' across each of the intervals $I_i$, to produce $^*$-homo\-mor\-phisms $\rho_i$ defined on $C_0(I_i,A)$ which are compatible with $\Theta$ and agree with the two maps in (\ref{e0.2})  on the left- and right-hand red thirds of $I_i$.  This is achieved using the additional space given by a $2$-fold matrix amplification, so that each $\rho_i$ maps $C_0(I_i,A)$ into $\Q_\omega\otimes M_N\otimes M_2$ and restricts to 
\begin{equation}
\begin{pmatrix}\grave\Phi|_{\ldots}^{\oplus (N-i+1)}\oplus \acute\Phi|_{\ldots}^{\oplus (i-1)}&0\\0&0\end{pmatrix}\text{ and }\begin{pmatrix}\grave\Phi|_{\ldots}^{\oplus (N-i)}\oplus \acute\Phi|_{\ldots}^{\oplus i}&0\\0&0\end{pmatrix}
\end{equation}
on the left and right hand thirds of $I_i$ respectively. Additionally, the construction is such that $\rho_i$ induces the tracial functional $x\mapsto \frac{1}{2}\int_{I_i} \tau_A(x(t))\,dt$; the factor of $\frac{1}{2}$ arising from the two-fold amplification. In particular $\rho_i$ and $\rho_{i+1}$ agree on $I_i\cap I_{i+1}$.  Then, taking a partition of unity $(f_i)_i$ for $C([0,1])$ subordinate to $(I_i)_i$, $\Psi(a):=\sum_{i=1}^N\rho_i(f_i\otimes a)$ defines a $^*$-homomorphism into $\Q_\omega\otimes M_N\otimes M_2$ inducing $\frac{1}{2}\tau_A$.  This would suffice to establish quasidiagonality of $\tau_A$.

Finding an $n$ satisfying \eqref{intro.ue} on the nose seems challenging. However, working throughout with  the greater flexibility of approximate unitary equivalence on finite sets up to specified tolerances, a small modification of a ``stable uniqueness theorem'' of Dadarlat and Eilers from \cite{DE:PLMS} -- which was motivated by, and both reproves and generalises that of Lin \cite{Lin:JOT} (see also \cite{Lin:apprUCT}) -- gives the required $n$ for an approximate version of \eqref{intro.ue}.

Roughly speaking, stable uniqueness theorems ensure that nuclear $^*$-homomorphisms $\phi,\psi:C\rightarrow D$ which agree at the level of $KK$ become approximately unitary equivalent after taking a direct sum with an amplification of a suitable map $\iota:C\rightarrow D$. Somewhat more precisely, given a finite subset $\mathcal G$ of $C$ and tolerance $\delta>0$, there exists some $n\in\mathbb N$ such that $\phi\oplus\iota^{\oplus n}$ is approximately unitarily equivalent to $\psi\oplus\iota^{\oplus n}$ on $\mathcal G$ up to $\delta$ (see \cite[Theorem 4.5]{DE:PLMS} and Theorem \ref{DE:SU} below for the precise statement).  As explained in \cite{DE:PLMS}, for many applications, including ours, it is of particular importance to know when $n$ can be chosen to depend only on $\mathcal G$ and $\delta$ and not on the particular maps $\phi,\psi$ and $\iota$. Using the UCT, Dadarlat and Eilers achieved such a result, which has subsequently seen wide application, by means of a sequence of counterexamples argument for simple and unital $A$ (see \cite[Theorem 4.12]{DE:PLMS}). The UCT enters crucially to control the $KK$-classes of the sequence of counterexamples.

In our case the domain $C$ is nonsimple, so we give a slight generalisation of \cite[Theorem 4.12]{DE:PLMS} to allow for this, using ``controlled fullness'' in place of simplicity. In nice cases (including the situation of Theorem \ref{thm:MainThm}), this leads to a stable uniqueness theorem where $n$ depends on $\mathcal G$ and $\delta$ and the tracial data of $\iota$, but not on further properties of $\iota$, nor on $\phi$ and $\psi$.  We set out this generalisation in Section \ref{Sect:SU}.  A quite similar result can be found in \cite{Lin:apprUCT}.  

Thus, we fix a canonical domain algebra $C\cong C_{0}((\frac13,\frac23),A)^\sim$ which, after an appropriate rescaling, will stand in for $C_0(J,A)^\sim$ ($J$ being a small interval), and obtain an approximate version of (\ref{intro.ue}) for $\mathcal G$ and $\delta$ (which in turn come from an approximate version of the patching lemma). However, as $|J|$ gets smaller, the estimates on the controlled fullness of the resulting maps 
\begin{equation}\label{intro.scale}
\xymatrix{C\ar[rr]^{\cong\quad\quad}&&C_0(J,A)^\sim\ar[rrr]^{\quad\quad\acute\Phi|_{C_0(J,A)}^\sim,\ \grave\Phi|_{C_0(J,A)}^\sim}&&&{\Q_\omega}}
\end{equation}
will get worse. This is rectified using projections $(q_J)_J$ in $\Q_\omega$ (constructed in Lemma \ref{lem:CharFunction}) such that the maps in \eqref{intro.scale} take values in the corner $q_J\Q_\omega q_J$. With respect to the unique normalised trace on this corner, the tracial behaviour of these maps becomes independent of $J$ so that our controlled stable uniqueness theorem applies (agreement of the maps in (\ref{intro.scale}) on total $K$-theory is essentially automatic, as they are constructed from contractible maps). Finally, by exploiting the inherent orthogonality of $\rho_i$ and $\rho_j$ whenever $I_i\cap I_j=\emptyset$, we ensure that the final approximate multiplicativity estimates for $\Psi$ do not depend on $n$.

Full details of the stable uniqueness across the interval argument are given in Section \ref{Section:Proof} with the approximate patching result in Section \ref{Sect.Patch}.

\subsection*{Acknowledgements} AT and SW thank WW, the staff and members of the Mathematics Institute at WWU M\"unster for their help and hospitality during the extended visits during which this research was undertaken. We thank Bruce Blackadar, Nate Brown, Jos{\'e} Carri{\'o}n, Marius Dadarlat, Caleb Eckhardt, George Elliott, Ilijas Farah, Guihua Gong, David Kerr, Eberhard Kirchberg, Huaxin Lin, Hiroki Matui, Zhuang Niu, Narutaka Ozawa, Mikael R{\o}rdam and Yasuhiko Sato for inspiring conversations. We also thank Dominic Enders, Jamie Gabe and Ilan Hirshberg for helpful comments on an earlier draft of this paper. Finally, we thank the referee for their helpful comments on the first submitted version.

\renewcommand*{\thetheorem}{\roman{theorem}}
\numberwithin{theorem}{section}

\section{Quasidiagonality, $\Q_\omega$, and the UCT}

\noindent In this section we record a number of preliminary facts regarding quasidiagonality, quasidiagonal traces, the ultrapower of the universal UHF algebra and the universal coefficient theorem.

Let $A$ be a $\mathrm C^*$-algebra. Denote the positive cone of $A$ by $A_+$, and the set of positive contractions by $A_+^1$.
For $a,b \in A$ and $\varepsilon > 0$, we write $a \approx_\varepsilon b$ to mean $\|a-b\|\leq \varepsilon$ (and we use the respective notation for real or complex numbers).
For a unitary $u \in A$, we let $\mathrm{Ad}(u)$ denote the automorphism of $A$ defined by $\mathrm{Ad}(u)(a):=uau^*$, $a \in A$.

A \emph{tracial functional} is a positive functional $\tau:A \to \mathbb C$ such that $\tau(ab)=\tau(ba)$ for every $a,b\in A$ and for us a \emph{trace} is a tracial functional that is normalised, i.e., a state. We write $T(A)$ for the collection of all traces on $A$. The $\mathrm C^*$-algebra of $k \times k$ complex matrices is denoted $M_k$; we write $\tau_{M_k}$ for its unique trace, and $\mathrm{Tr}_k$ for the canonical tracial functional on $M_k$ satisfying $\mathrm{Tr}_k(1_k) = k$.

We now recall the central notions of this paper: quasidiagonality and quasidiagonal traces.
(Rather than the original definition of quasidiagonality in terms of a quasidiagonal faithful representation, we use as our definition a formulation shown to be equivalent by Voiculescu in \cite[Theorem 1]{Voi:Duke}.)
We make these definitions for unital $\mathrm{C}^*$-algebras, the primary context of the paper; see Remark \ref{QDNonUnital} for the non-unital case.

\begin{defn}\label{DefQD}
A unital $\mathrm{C}^*$-algebra $A$ is \emph{quasidiagonal} if, for every finite subset $\mathcal F_A$ of $A$ and $\varepsilon>0$, there exist a matrix algebra $M_k$ and a unital completely positive (u.c.p.{}) map $\psi:A\to M_k$ such that
\begin{align}
\psi(ab) &\approx_\varepsilon \psi(a)\psi(b),\quad a,b \in \mathcal F_A, \quad \text{and} \\
\|\psi(a)\|&\approx_\varepsilon\|a\|,\quad a\in\mathcal F_A.
\end{align}
\end{defn}

\begin{defn}[{\cite[Definition 3.3.1]{B:MAMS}}]\label{def:QDTraces}
Let $A$ be a unital $\mathrm C^*$-algebra. A trace $\tau_A\in T(A)$ is \emph{quasidiagonal} if for every finite set $\mathcal F_A$ of $A$ and $\varepsilon>0$, there exist a matrix algebra $M_k$ and a u.c.p.\ map $\psi:A \to M_k$ such that
\begin{align}
\label{eq:QDTraces0}
\psi(ab) & \approx_\varepsilon \psi(a)\psi(b),\quad a,b \in \mathcal F_A, \quad \text{and} \\
\label{eq:QDTraces1}
 \tau_{M_k} \circ \psi (a) & \approx_\varepsilon \tau_A(a),\quad a\in\mathcal F_A.
\end{align}
\end{defn}
Note that quasidiagonality of a trace $\tau_A$ does not mean the same thing as quasidiagonality (as a set of operators, in Halmos' sense) of $\pi_{\tau_A}(A)$, where $\pi_{\tau_A}$ is the GNS representation corresponding to $\tau_A$.

We now turn to a standard characterisation of quasidiagonality in terms of embeddings. Write $\Q$ for the universal UHF algebra. Fix, throughout the paper, a free ultrafilter $\omega$ on $\mathbb N$, and let $\Q_\omega$ be the ultrapower of $\Q$, defined by
\begin{equation}
\Q_\omega := \ell^\infty(\Q) / \{(x_n)\in\ell^\infty(\Q) \mid \lim_{n\to\omega} \|x_n\| = 0\}.
\end{equation}
When there is no prospect of confusion, we often work with a representative of an element $x$ in $\Q_\omega$, namely a lift of $x$ to some element in $\ell^\infty(\Q)$ as in \eqref{DeftauQw} and Lemma \ref{lem:CharFunction} below. In particular, recall that all projections can be represented by sequences of projections, and unitaries by sequences of unitaries. 

The unique trace $\tau_\Q$ on $\Q$ induces a trace $\tau_{\Q_\omega}$ on $\Q_\omega$ by
\begin{equation}\label{DeftauQw} \tau_{\Q_\omega}(x) := \lim_\omega \tau_\Q(x_n), \quad x \in \Q_\omega \text{ is represented by }(x_n) \in \ell^{\infty}(\mathcal{Q}).
\end{equation}
By \cite[Lemma 4.7]{MS:AJM} (see also \cite[Theorem 8]{O:JMSUT}), $\tau_{\Q_\omega}$ is the unique trace on $\Q_\omega$. 
The following two observations are consequences of the standard fact that for any nonzero projection $q\in\Q$, $q\Q q\cong\Q$.
\begin{prop}
\label{prop:Qfacts}
\begin{enumerate}[(i)]
\item For any nonzero projection $q \in \Q_\omega$, $q\Q_\omega q \cong \Q_\omega$.\label{prop:Qfacts.1}
\item For any projection $q\in\Q_\omega$ with $\tau_{\Q_\omega}(q)>0$, there exist $k\in\mathbb N$ and a (not necessarily unital) embedding $\theta:\Q_\omega\to q\Q_\omega q\otimes M_k$ such that $\theta(qxq)=qxq\otimes e_{11}$ for $x\in\Q_\omega$.\label{prop:Qfacts.2}
\end{enumerate}
\end{prop}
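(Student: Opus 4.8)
The plan is to prove both parts by passing to representing sequences, with (i) serving as a warm-up for (ii); throughout I fix a lift $(q_n)_n\in\ell^\infty(\Q)$ of $q$ consisting of projections. For (i): since $q\neq0$, the set $S_0:=\{n:q_n\neq0\}$ lies in $\omega$, and for $n\in S_0$ I use the standard fact to fix an isomorphism $\alpha_n\colon\Q\xrightarrow{\;\cong\;}q_n\Q q_n$, setting $\alpha_n=0$ otherwise. Each $\alpha_n$ is contractive, so $(\alpha_n)_n$ carries $\ell^\infty(\Q)$ into itself and $\omega$-null sequences into $\omega$-null sequences, hence descends to a $^*$-homomorphism $\bar\alpha\colon\Q_\omega\to\Q_\omega$ with range in $q\Q_\omega q$. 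As $\alpha_n$ is isometric for $n\in S_0\in\omega$, $\bar\alpha$ is isometric, hence injective; and it is onto $q\Q_\omega q$ because every $y\in q\Q_\omega q$ has a representing sequence of the form $(q_ny_nq_n)_n$, so the sequence equal to $\alpha_n^{-1}(q_ny_nq_n)$ on $S_0$ and $0$ elsewhere represents a preimage. The only things to check --- well-definedness, isometry, surjectivity --- are immediate.

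For (ii): if $\tau_{\Q_\omega}(q)=1$ then $q=1_{\Q_\omega}$ and I take $k=1$, $\theta=\mathrm{id}$, so assume $\tau_{\Q_\omega}(q)<1$ and fix an integer $k\geq2$ with $k\tau_{\Q_\omega}(q)>1$. Since $\lim_{n\to\omega}\tau_\Q(q_n)=\tau_{\Q_\omega}(q)\in(\tfrac1k,1)$, the set $S:=\{n:\tfrac1k<\tau_\Q(q_n)<1\}$ lies in $\omega$. I would construct, for each $n\in S$, an embedding $\theta_n\colon\Q\to q_n\Q q_n\otimes M_k$ with $\theta_n(q_nxq_n)=q_nxq_n\otimes e_{11}$ ($x\in\Q$), putting $\theta_n=0$ for $n\notin S$. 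Then, using that $q\Q_\omega q$ coincides with the reduced ultraproduct $\prod_{n\to\omega}q_n\Q q_n$ inside $\Q_\omega$ --- so that $q\Q_\omega q\otimes M_k\cong\prod_{n\to\omega}(q_n\Q q_n\otimes M_k)$ --- the sequence $(\theta_n)_n$ induces a $^*$-homomorphism $\theta\colon\Q_\omega\to q\Q_\omega q\otimes M_k$, which is isometric (each $\theta_n$, $n\in S\in\omega$, being so), hence an embedding, and satisfies $\theta(qxq)=qxq\otimes e_{11}$ since this identity holds at every index in $S$.

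To build $\theta_n$ for $n\in S$, put $B_n:=q_n\Q q_n\otimes M_k$, a UHF algebra of the same (universal) supernatural number as $\Q$, with unique normalised trace $\tau_{B_n}$ and $\tau_{B_n}(q_n\otimes e_{11})=\tfrac1k$. Writing $t_n:=\tau_\Q(q_n)\in(\tfrac1k,1)\cap\mathbb{Q}$, one has $\tfrac1{kt_n}-\tfrac1k\in(0,1-\tfrac1k]\cap\mathbb{Q}$, so there is a projection $p_n\in B_n$ with $q_n\otimes e_{11}\leq p_n$ and $\tau_{B_n}(p_n)=\tfrac1{kt_n}$, obtained by adjoining to $q_n\otimes e_{11}$ a suitable subprojection of $1_{B_n}-q_n\otimes e_{11}$. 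Then $p_nB_np_n$ is again UHF of the same type, $\iota_n\colon q_n\Q q_n\to p_nB_np_n$, $y\mapsto y\otimes e_{11}$, is an isomorphism onto the full corner $q_n\Q q_n\otimes e_{11}$, and the relative trace of this corner in $p_nB_np_n$ is $\tfrac{1/k}{1/(kt_n)}=t_n$ --- matching the relative trace of $q_n\Q q_n$ in $\Q$. Hence the map induced by $\iota_n$ on $K_0$, transported along the corner isomorphisms $K_0(q_n\Q q_n)\cong K_0(\Q)$ and $K_0(q_n\Q q_n\otimes e_{11})\cong K_0(p_nB_np_n)$, is the identity of $K_0(\Q)\cong\mathbb{Q}$ and in particular sends $[1_\Q]$ to $[1_{p_nB_np_n}]$; so, by the classification of UHF (more generally AF) algebras together with the accompanying fact that an isomorphism between full hereditary corners extends to the ambient algebras once this $K_0$-condition holds, $\iota_n$ extends to an isomorphism $\Q\xrightarrow{\;\cong\;}p_nB_np_n$. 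Composing with the inclusion $p_nB_np_n\hookrightarrow B_n$ then gives $\theta_n$. (Alternatively, one may build $\theta_n$ directly from a presentation $\Q=\varinjlim M_{m!}$, perturbing $q_n$ into a finite stage and using $kt_n>1$ to fit compatible multiplicity-one matrix embeddings that restrict to $y\mapsto y\otimes e_{11}$ on the corners.)

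The hard part is exactly this construction of $\theta_n$: $\tau_\Q(q_n)$ is an arbitrary rational in $(\tfrac1k,1)$, and the corner $q_n\Q q_n\otimes e_{11}$ must be realised inside a copy of $\Q$ \emph{compatibly with the prescribed corner map}. Choosing $k$ with $k\tau_{\Q_\omega}(q)>1$ and replacing $B_n$ by its generally non-unital corner $p_nB_np_n$ is precisely what aligns the relative traces so that this extension exists; it is also the reason $\theta$ need not --- and, when $\tau_{\Q_\omega}(q)$ is irrational, cannot --- be unital.
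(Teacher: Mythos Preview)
Your proof is correct and follows the same strategy as the paper's: lift $q$ to a sequence of projections $(q_n)$, build the required isomorphisms/embeddings indexwise using that nonzero corners of $\Q$ are again $\Q$, and induce the map on the ultrapower. Where the paper simply invokes ``elementary properties of $\Q$'' for the existence of $\theta_n$, you supply a more elaborate construction via the auxiliary projection $p_n$ and an extension-of-corner-isomorphism step (true for AF algebras, but not quite as immediate as you make it sound); your parenthetical alternative via finite matrix stages is both simpler and closer to what the paper presumably intends.
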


\begin{proof}
(\ref{prop:Qfacts.1}) Given a nonzero projection $q\in\Q_\omega$, we can find a representative $(q_n)$ of $q$ such that each $q_n$ is a nonzero projection in $\Q$. For each $n$, fix an isomorphism $\theta_n:\Q\rightarrow q_n\Q q_n\subset\Q$.  Then the sequence $(\theta_n)$ induces an injective $^*$-homomorphism $\theta:\Q_\omega\rightarrow \Q_\omega$ with image $q\Q_\omega q$.

(\ref{prop:Qfacts.2}) Given such a $q$, fix $k\in\mathbb N$ with $\frac{1}{k}<\tau_{\Q_\omega}(q)$, and fix a representative sequence $(q_n)$ of $q$ by projections in $\Q$ with $\tau_\Q(q_n)>\frac{1}{k}$ for all $n$. Since $k\tau_{\Q}(q_n)>1$, elementary properties of $\Q$ provide embeddings $\theta_n:\Q\rightarrow q_n\Q q_n\otimes M_k$ with $\theta_n(q_nxq_n)=q_nxq_n\otimes e_{11}$ for $x\in \Q$.  The embedding $\theta:\Q_\omega\to q\Q_\omega q\otimes M_k$ induced by the sequence $(\theta_n)$ then has the specified property.
\end{proof}

We now record characterisations of quasidiagonality and quasidiagonal traces in terms of maps into $\Q_\omega$. The condition in (\ref{EasyQDT.3}) below is exactly the form we will use to prove Theorem \ref{thm:MainThm}.
\begin{prop}\label{Prop.EasyQDT}
Let $A$ be a separable, unital and nuclear $\mathrm{C}^*$-algebra.
\begin{enumerate}[(i)]
\item\label{EasyQD} Then $A$ is quasidiagonal if and only if there exists a unital embedding $A\hookrightarrow \Q_\omega$.
\item\label{EasyQDT} For a trace $\tau_{A}\in T(A)$, the following statements are equivalent:
\begin{enumerate}[(a)]
\item $\tau_A$ is quasidiagonal;\label{EasyQDT.1}
\item\label{EasyQDT.2} there exists a unital $^*$-homomorphism $\theta:A\to\Q_\omega$ such that $\tau_{\Q_\omega}\circ\theta=\tau_A$;
\item\label{EasyQDT.3} there exists $\gamma \in (0,1]$ such that for every finite subset $\mathcal F_A\subset A$ and $\varepsilon>0$ there is a completely positive map $\phi:A\rightarrow\Q_\omega$ such that
\begin{align}
\label{eq:EasyQD1}
\phi(ab)&\approx_\varepsilon \phi(a)\phi(b),\quad a,b\in \mathcal F_A, \text{ and}\\
\label{eq:EasyQD2}
\tau_{\Q_\omega}\circ\phi(a) &= \gamma\tau_A(a),\quad a\in \mathcal F_A.
\end{align}
\end{enumerate}
\end{enumerate}
In particular if $A$ has a faithful quasidiagonal trace, then $A$ is quasidiagonal.\footnote{Using a variant of this proposition which involves maps into $\Q_\omega$ with c.p.\ liftings to $\ell^\infty(\Q)$, this last statement holds for not-necessarily nuclear $\mathrm{C}^*$-algebras. }
\end{prop}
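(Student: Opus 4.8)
The plan is to prove the equivalences in Proposition~\ref{Prop.EasyQDT} and then read off the final statement. The implications (\ref{EasyQDT.2})$\Rightarrow$(\ref{EasyQDT.3})$\Rightarrow$(\ref{EasyQDT.1}) are the straightforward direction, while (\ref{EasyQDT.1})$\Rightarrow$(\ref{EasyQDT.2}) is where the real work lies; part (\ref{EasyQD}) is then the special case of (\ref{EasyQDT.2})$\Leftrightarrow$(\ref{EasyQDT.1}) recording that the embedding can be taken unital without any trace constraint. For (\ref{EasyQDT.2})$\Rightarrow$(\ref{EasyQDT.3}) one simply takes $\phi=\theta$ and $\gamma=1$; a $^*$-homomorphism is exactly multiplicative, so \eqref{eq:EasyQD1} holds on the nose. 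For (\ref{EasyQDT.3})$\Rightarrow$(\ref{EasyQDT.1}): given the hypothesised $\gamma$ and the maps $\phi$, fix a projection $q\in\Q_\omega$ with $\tau_{\Q_\omega}(q)$ close to $\gamma$ (using that $\Q_\omega$ has projections of every trace value in $[0,1]$, since $\Q$ does), compress to get maps into $q\Q_\omega q\cong\Q_\omega$ with trace $\gamma\tau_A$ that is, after renormalising by $1/\tau_{\Q_\omega}(q)$, a state, and then unitise to get u.c.p.\ approximately multiplicative maps $A\to\Q_\omega$ whose composition with the normalised trace approximates $\tau_A$ on $\mathcal F_A$; this is precisely Definition~\ref{def:QDTraces} applied inside $\Q_\omega$, so $\tau_A$ is quasidiagonal. (One must be slightly careful to push the u.c.p.\ approximations of Definition~\ref{def:QDTraces}, which a priori land in matrix algebras $M_k$, into $\Q_\omega$ via a unital embedding $M_k\hookrightarrow\Q_\omega$; since $\Q$ is UHF this is automatic.)

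The substantive implication is (\ref{EasyQDT.1})$\Rightarrow$(\ref{EasyQDT.2}). By Definition~\ref{def:QDTraces}, for each finite $\mathcal F\subset A$ and $\varepsilon>0$ there is a u.c.p.\ map $\psi_{\mathcal F,\varepsilon}:A\to M_{k(\mathcal F,\varepsilon)}$ that is $\varepsilon$-multiplicative on $\mathcal F$ and with $\tau_{M_k}\circ\psi_{\mathcal F,\varepsilon}\approx_\varepsilon\tau_A$ on $\mathcal F$. Composing with a fixed unital embedding $M_k\hookrightarrow\Q$ (again available since $\Q$ is the universal UHF algebra), and indexing a cofinal sequence $(\mathcal F_n)$ with $\varepsilon_n=1/n$, one assembles the sequence $(\psi_{\mathcal F_n,1/n})$ into a single map $\Theta:A\to\Q_\omega$ by $\Theta(a):=[(\psi_{\mathcal F_n,1/n}(a))_n]$. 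Because the approximate multiplicativity defect goes to $0$ along the sequence, $\Theta$ is a genuine $^*$-homomorphism; it is unital because each $\psi_{\mathcal F_n,1/n}$ is unital; and $\tau_{\Q_\omega}\circ\Theta(a)=\lim_\omega\tau_\Q(\psi_{\mathcal F_n,1/n}(a))=\lim_\omega\tau_{M_{k_n}}(\psi_{\mathcal F_n,1/n}(a))=\tau_A(a)$ for every $a$, using that each finite set of $a$'s is eventually contained in $\mathcal F_n$. This gives (\ref{EasyQDT.2}). (The same construction without the tracial clause yields (\ref{EasyQD}): quasidiagonality gives $\varepsilon$-multiplicative u.c.p.\ maps into matrix algebras that are also $\varepsilon$-isometric on $\mathcal F_n$, and the assembled $\Theta$ is then an isometric, hence injective, unital $^*$-homomorphism $A\hookrightarrow\Q_\omega$; conversely a unital embedding $A\hookrightarrow\Q_\omega$ yields, by the standard reindexing/separability argument, u.c.p.\ approximately multiplicative approximately isometric maps into matrix subalgebras of $\Q$.)

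Finally, the displayed last statement follows immediately: if $\tau_A$ is a faithful quasidiagonal trace, apply (\ref{EasyQDT.1})$\Rightarrow$(\ref{EasyQDT.2}) to obtain a unital $^*$-homomorphism $\theta:A\to\Q_\omega$ with $\tau_{\Q_\omega}\circ\theta=\tau_A$; faithfulness of $\tau_A$ forces $\ker\theta=\{a:\tau_A(a^*a)=0\}=\{0\}$, so $\theta$ is a unital embedding $A\hookrightarrow\Q_\omega$, whence $A$ is quasidiagonal by part~(\ref{EasyQD}). The main obstacle in all of this is purely bookkeeping: verifying that the assembled map $\Theta$ is well-defined (independent of the choice of lifts up to the ultrafilter kernel) and genuinely multiplicative, which amounts to the routine fact that $\varepsilon_n\to 0$ kills the multiplicativity defect in the quotient $\Q_\omega$; there is no conceptual difficulty, only the need to be careful that every auxiliary embedding (of $M_k$ into $\Q$, of a matrix block into $\Q_\omega$) really exists, which it does because $\Q$ is UHF of infinite type and absorbs all matrix algebras unitally.
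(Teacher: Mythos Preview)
Your argument for (\ref{EasyQDT.1})$\Rightarrow$(\ref{EasyQDT.2}) and for (\ref{EasyQDT.2})$\Rightarrow$(\ref{EasyQDT.3}) is correct and matches the paper. The final deduction (faithful QD trace $\Rightarrow$ quasidiagonal) is also fine.

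The genuine gap is in your (\ref{EasyQDT.3})$\Rightarrow$(\ref{EasyQDT.1}). Two points:
\begin{itemize}
\item The ``compress by a fixed projection $q$ with $\tau_{\Q_\omega}(q)\approx\gamma$'' step does not work as written. The maps $\phi$ from (\ref{EasyQDT.3}) are only completely positive and approximately multiplicative; there is no reason $\phi(A)$ lies in $q\Q_\omega q$, and cutting down by $q$ destroys approximate multiplicativity unless $q$ approximately commutes with $\phi(A)$. What you would actually need is a projection close to $\phi(1_A)$ (which is only an approximate projection), and this varies with each $\phi$.
\item Your parenthetical fix is backwards. Definition~\ref{def:QDTraces} demands u.c.p.\ maps into matrix algebras $M_k$. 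Having approximately multiplicative u.c.p.\ maps $A\to\Q_\omega$ does not give such maps by embedding $M_k\hookrightarrow\Q_\omega$; you need to go the other way, lifting $A\to\Q_\omega$ to a sequence $A\to\Q$ via the Choi--Effros lifting theorem (this is precisely where nuclearity of $A$ is used) and then following by conditional expectations onto large matrix subalgebras of $\Q$. You never invoke Choi--Effros, so the role of nuclearity is invisible in your proof.
\end{itemize}

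The paper sidesteps both difficulties by proving (\ref{EasyQDT.3})$\Rightarrow$(\ref{EasyQDT.2}) instead of (\ref{EasyQDT.3})$\Rightarrow$(\ref{EasyQDT.1}). A reindexing argument turns the family $(\phi_{\mathcal F,\varepsilon})$ into a genuine $^*$-homomorphism $\theta:A\to\Q_\omega$ with $\tau_{\Q_\omega}\circ\theta=\gamma\tau_A$; now $q:=\theta(1_A)$ is an honest projection of trace $\gamma>0$, so $\theta$ is automatically unital into $q\Q_\omega q\cong\Q_\omega$ (Proposition~\ref{prop:Qfacts}(\ref{prop:Qfacts.1})), and uniqueness of the trace gives $\tau_{q\Q_\omega q}\circ\theta=\tau_A$ exactly. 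This avoids any compression of approximately multiplicative maps. The same Choi--Effros lifting is then used once, in the direction ``embedding $A\hookrightarrow\Q_\omega$ $\Rightarrow$ quasidiagonal'' of part~(\ref{EasyQD}), which your sketch also glosses over as a ``standard reindexing/separability argument''.
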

\begin{proof}
(\ref{EasyQD}) is well-known; we recall the argument as it will be reused in (\ref{EasyQDT}). By the Choi--Effros lifting theorem \cite[Theorem 3.10]{CE:Ann}, a unital embedding of $A$ into $\Q_\omega$ can be lifted to a sequence of c.p.c.\ maps into $\Q$, which must be approximately multiplicative and approximately isometric (along $\omega$). Following these maps by expectations onto sufficiently large unital matrix subalgebras of $\Q$ gives the maps required by Definition \ref{DefQD}.  Conversely, by separability of $A$, Definition \ref{DefQD} provides a sequence $\theta_n:A\rightarrow M_{k_n}\subset\Q$ of approximately multiplicative and approximately isometric u.c.p.\ maps.
The induced map is a unital embedding $A\hookrightarrow\Q_\omega$.

The equivalence of (\ref{EasyQDT.1}) and (\ref{EasyQDT.2}) proceeds in exactly the same fashion as the proof of (\ref{EasyQD}).  It is clear that (\ref{EasyQDT.2}) implies (\ref{EasyQDT.3}) (with $\gamma=1$).
Conversely, if condition (\ref{EasyQDT.3}) holds, then a standard reindexing argument (see for example \cite[Section 1.3]{BBSTWW:arXiv}) gives a $^*$-homomorphism $\theta:A\rightarrow\Q_\omega$ such that $\tau_{\Q_\omega}(\theta(a))=\gamma \tau_A(a)$ for all $a\in A$. Then $q:=\theta(1_A)$ is a projection in $\Q_\omega$ which is nonzero as $\tau_{\Q_\omega}(q)=\gamma>0$.  We can view $\theta$ as a unital $^*$-homomorphism taking values in $q\Q_\omega q$, which is isomorphic to $\Q_\omega$ by Proposition \ref{prop:Qfacts} (\ref{prop:Qfacts.1}). The uniqueness of the trace on $\Q_\omega$ ensures that $\tau_{q\Q_\omega q}(\theta(a))=\frac{1}{\tau_{\Q_\omega}(q)}\tau_{\Q_\omega}(\theta(a))=\tau_A(a)$ for $a\in A$, so (\ref{EasyQDT.2}) holds.

Finally, note that if $\tau_A$ is faithful, then a unital $^*$-homomorphism $\theta$ as in (\ref{EasyQDT.2}) is injective, so $A$ is quasidiagonal.
\end{proof}

\begin{remark}\label{QDNonUnital}
\begin{enumerate}[(i)]
\item \label{QDNonUnital1}
When $A$ is a non-unital $\mathrm{C}^*$-algebra one can define quasidiagonality in two natural equivalent ways: as in Definition \ref{DefQD} using c.p.c.\ maps in place of u.c.p.\ maps or by asking for the minimal unitisation $A^\sim$ to be quasidiagonal (see  \cite[Sections 2.2 and 7.1]{BrOz}). 
\item\label{QDNonUnital2} Likewise, given a trace $\tau_A$ on a non-unital $\mathrm{C}^*$-algebra $A$, we obtain a trace $\tau_{A^\sim}$ on $A^\sim$ by the formula
\begin{equation}
\tau_{A^{\sim}}(\lambda 1 + a) := \lambda + \tau_A(a), \quad \lambda \in \mathbb C,\ a \in A.
\end{equation}
We define $\tau_A$ to be quasidiagonal if the trace $\tau_{A^\sim}$ is, or equivalently, if Definition \ref{def:QDTraces} holds using c.p.c.\ maps in place of u.c.p.\ maps.
\item\label{QDNonUnital3} It is well-known that if the trace $\tau_A$ is faithful on the non-unital $\mathrm{C}^*$-algebra $A$, then so too is $\tau_{A^\sim}$.\footnote{Every positive element of $A^\sim\setminus A$ is, up to scalar multiplication, of the form $1-a$ where $a \in A$ is a self-adjoint whose positive part $a_+$ is a contraction. Since $a_+$ isn't the unit, $x(1-a_+)x^* \neq 0$ for some $x \in A$, so that $\tau_A(x(1-a_+)x^*)>0$. Hence $\tau_A(a_+) < 1$ (see \cite[Proposition 2.11]{T:MA}, for example) so that $\tau_{A^{\sim}}(1-a) \geq 1-\tau_A(a_+) > 0$.}
In particular, a faithful trace $\tau_A$ on $A$ is quasidiagonal provided the faithful trace $\tau_{A^\sim}$ on $A^\sim$ is. In this case $A^\sim$, and hence also $A$, is quasidiagonal.
\end{enumerate}
\end{remark}

The next lemma provides the projections in $\Q_\omega$ used to rescale the trace in the ``stable uniqueness across the interval'' procedure of Section~\ref{Section:Proof}.

\begin{lemma}
\label{lem:CharFunction}
Let $k \in \mathcal Q_\omega$ be a positive contraction such that $\tau_{\Q_\omega}(h(k)) = \int_0^1 h(t)\,dt$ for all $h \in C([0,1])$. For each relatively open interval $I\subseteq [0,1]$, there exists a projection $q_I\in\Q_\omega$ such that the family $(q_I)_I$ satisfies:
\begin{enumerate}[(i)]
\item $q_I$ commutes with $k$, for all $I$;\label{CharFunction1}
\item $q_I$ acts as a unit on $h(k)$, for all $h \in C_0(I)$ and all $I$;\label{CharFunction2}
\item if $h\in C([0,1])$ satisfies $h|_I \equiv 1$, then $h(k)q_I=q_I$;\label{CharFunctionNew}
\item if $I,J$ are disjoint, then $q_Iq_J=0$;\label{CharFunction3}
\item for all $I$, $\tau_{\mathcal Q_\omega}(q_I)$ is equal to the length, $|I|$, of $I$.\label{CharFunction4}
\end{enumerate}
\end{lemma}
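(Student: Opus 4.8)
The plan is to work with representative sequences and exploit that $\mathcal Q$ contains unital copies of every matrix algebra, so we can approximate the continuous functional calculus of $k$ by genuine spectral projections at the level of $\mathcal Q$. First I would fix a representative $(k_n)$ of $k$ by positive contractions in $\mathcal Q$. Since $\tau_{\mathcal Q_\omega}(h(k)) = \int_0^1 h\,dt$ for all $h \in C([0,1])$, the pushforward measures $\mu_n := (k_n)_*\tau_{\mathcal Q}$ on $[0,1]$ converge weak$^*$ along $\omega$ to Lebesgue measure. For a fixed relatively open interval $I = (a,b) \cap [0,1]$, the idea is to set $q_I$ to be (the class of) a spectral projection $\chi_{[a,b]}(k_n)$ --- but these need not lie in $\mathcal Q$, so instead I would choose, for each $n$, a projection $q_{I,n} \in \mathcal Q$ commuting with $k_n$ that approximates $\chi_{(a,b)}(k_n)$ well enough; concretely, using that $q_{I,n}$ can be taken in the (commutative, hence AF) $\mathrm{C}^*$-subalgebra $\mathrm{C}^*(k_n) \subseteq \mathcal Q$ after cutting down by a suitable spectral projection available in $\mathcal Q$ --- here one uses that any separable abelian subalgebra of $\mathcal Q$ sits inside a UHF subalgebra, so characteristic functions of clopen-in-spectrum sets are available up to small perturbation. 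The cleanest route: pick continuous bump functions $g_I \leq 1_{[a,b]}$ with $g_I \equiv 1$ on a slightly shrunk interval, apply functional calculus, and take $q_{I,n}$ to be a projection in $\mathcal Q$ with $\|q_{I,n} - g_I(k_n)\|$ and the relevant commutator norms $o(1)$ as $n \to \omega$; since these norms vanish in the limit, $q_I := [(q_{I,n})]$ is a genuine projection in $\mathcal Q_\omega$.

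With $q_I$ so defined, property (\ref{CharFunction1}) is immediate since $[q_{I,n}, k_n] \to 0$ along $\omega$. For (\ref{CharFunction2}) and (\ref{CharFunctionNew}): if $h \in C_0(I)$ then $h \cdot g_I = h$ provided $g_I$ was chosen to equal $1$ on $\mathrm{supp}(h)$ --- but since there are uncountably many $h$, I would instead argue that $g_I \to \chi_{(a,b)}$ pointwise and note $h \cdot \chi_{(a,b)} = h$ for $h \in C_0(I)$, so $q_I h(k) = h(k)$ follows from $\|(q_{I,n} - g_I(k_n))h(k_n)\| \to 0$ together with $g_I(k_n)h(k_n) = (g_I h)(k_n) \to h(k)$ (one must check the product estimate passes through functional calculus, which is routine since all functions involved are uniformly continuous on $[0,1]$). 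Property (\ref{CharFunctionNew}) is dual: if $h|_I \equiv 1$ then $h \cdot g_I = g_I$, giving $h(k) q_I = q_I$. Property (\ref{CharFunction4}) is where the hypothesis on the distribution of $k$ is used: $\tau_{\mathcal Q_\omega}(q_I) = \lim_\omega \tau_{\mathcal Q}(q_{I,n}) = \lim_\omega \tau_{\mathcal Q}(g_I(k_n)) = \lim_\omega \int g_I \, d\mu_n = \int g_I \, d(\mathrm{Leb}) = |I|$, the last step using weak$^*$ convergence of $\mu_n$ and choosing the family of $g_I$'s carefully (e.g. a single $g$ suffices after affine reparametrisation, so only countably many bump functions enter and no diagonal issue arises).

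The one genuine subtlety --- and the step I expect to be the main obstacle --- is (\ref{CharFunction3}): disjoint intervals must give orthogonal projections, which requires \textbf{coherence} of the choices across all $I$ simultaneously. The fix is a standard separability/reindexing trick: rather than choosing $q_I$ independently for each $I$, I would first handle a countable dense family of intervals with rational endpoints, building the $q_{I,n}$'s all inside one fixed increasing sequence of UHF subalgebras of $\mathcal Q$ so that for rational intervals the spectral projections $\chi_{(a,b)}(k_n)$ can be taken genuinely in $\mathcal Q$ and genuinely orthogonal when the intervals are disjoint (using that on the finite-dimensional approximating algebra, $k_n$'s spectral theory is exact); then extend to all relatively open $I$ by writing $I$ as an increasing union of rational subintervals $I_m$ and a decreasing intersection relation, and defining $q_I$ via a reindexing along $\omega$ so that $q_I \geq q_{I_m}$ for all $m$ while $\tau_{\mathcal Q_\omega}(q_I) = |I|$ forces $q_I$ to be ``exactly'' supported on the spectrum over $\bar I$; disjointness of $I,J$ then reduces to disjointness at the rational level. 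Alternatively, and perhaps more cleanly, one notes $q_I q_J = 0$ is equivalent to $\tau_{\mathcal Q_\omega}(q_I q_J q_I) = 0$ by faithfulness-on-positives of the trace on $\mathcal Q_\omega$ (the trace is faithful since it is the unique trace and $\mathcal Q_\omega$ has a faithful trace via any limit of GNS data), and this trace estimate follows from $q_{I,n} q_{J,n} q_{I,n} \preceq g_I(k_n) g_J(k_n) g_I(k_n) = (g_I^2 g_J)(k_n)$ with $g_I^2 g_J \to 0$ in sup-norm when $I \cap J = \emptyset$ (choosing the bump functions supported close enough to their intervals). I would adopt this trace-estimate approach, as it sidesteps the need to make the projections literally orthogonal in $\mathcal Q$ and only requires the distributional hypothesis on $k$ plus uniqueness of the trace.
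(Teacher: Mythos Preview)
Your approach has a genuine gap, and the paper's route is both simpler and avoids it.

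The fatal error is in your ``cleaner'' argument for (\ref{CharFunction3}): you claim that $\tau_{\Q_\omega}(q_Iq_Jq_I)=0$ forces $q_Iq_J=0$ because $\tau_{\Q_\omega}$ is faithful. It is not. Take any sequence of projections $p_n\in\Q$ with $\tau_\Q(p_n)=1/n$; the class in $\Q_\omega$ is a nonzero projection with $\tau_{\Q_\omega}$-value zero. (The quotient of $\Q_\omega$ by the trace kernel is the \emph{tracial} ultrapower, a strictly smaller algebra.) So the trace estimate gives you nothing. Your alternative coherence argument via rational endpoints is too vague to assess, and would still need to produce genuine spectral projections in $\Q$ at each level, which you have not done.

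There is also an unresolved tension between (\ref{CharFunction2}), (\ref{CharFunctionNew}) and (\ref{CharFunction4}) in your bump-function scheme. A single continuous $g_I$ cannot simultaneously satisfy $g_I|_I\equiv 1$ (needed so that $g_I(k)h(k)=h(k)$ for \emph{all} $h\in C_0(I)$) and $\int_0^1 g_I\,dt=|I|$; and your claim that one can find a projection $q_{I,n}$ with $\|q_{I,n}-g_I(k_n)\|\to 0$ presupposes $g_I(k_n)$ is close to a projection, which fails whenever $g_I$ takes values strictly between $0$ and $1$ on the spectrum of $k_n$.

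The paper sidesteps all of this with one observation you missed: lift $k$ to a sequence $(k_n)$ of positive contractions \emph{with finite spectrum}. (Such a lift exists: perturb any lift slightly.) Then for any relatively open $I\subseteq[0,1]$, the characteristic function $\chi_I$ restricted to $\mathrm{sp}(k_n)$ is continuous, so $q_{I,n}:=\chi_I(k_n)$ is a genuine projection in $C^*(k_n)\subset\Q$. Properties (\ref{CharFunction1})--(\ref{CharFunction3}) then hold \emph{exactly} at every level $n$: commutation, the unit property for $h\in C_0(I)$, the absorption property for $h|_I\equiv 1$, and orthogonality for disjoint intervals are all immediate from spectral calculus. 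Property (\ref{CharFunction4}) follows by squeezing $\tau_{\Q_\omega}(q_I)$ between $\sup_{h\in C_0(I)_+^1}\tau_{\Q_\omega}(h(k))$ and $\inf_{h|_I\equiv 1}\tau_{\Q_\omega}(h(k))$, both of which equal $|I|$ by the Lebesgue hypothesis on $k$.
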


\begin{proof}
Represent $k$ by a sequence $(k_n) \in \ell^\infty(\Q)$ of positive contractions with discrete spectrum.
Fix a relatively open interval $I$ in $[0,1]$. For each $n$, let $q_{I,n}\in \mathcal Q$ denote the spectral projection of $k_n$ corresponding to the interval $I$, and then set $q_I$ to be the element of $\Q_\omega$ represented by $(q_{I,n})$. 

As each $q_{I,n}$ commutes with $k_n$, it follows that $q_I$ 
commutes with $k$, hence (\ref{CharFunction1}) holds.  For $h\in C_0(I)$, $h(k)$ is represented by $(h(k_n))$.
Since each $q_{I,n}$ acts as a unit on $h(k_n)$, it follows that $q_I$ acts as a unit on $h(k)$, proving (\ref{CharFunction2}).
Likewise, in (\ref{CharFunctionNew}), if $h\in C([0,1])$ satisfies $h|_I\equiv 1$, then $h(k_n)q_{I,n}=q_{I,n}$ for each $n$ so that $h(k)q_I=q_I$.
For (\ref{CharFunction3}), note that if $I\cap J=\emptyset$, then $q_{I,n}q_{J,n}=0$ for all $n$, whence $q_Iq_J=0$.  

For (\ref{CharFunction4}), note that (\ref{CharFunction2}) gives
\begin{equation} \tau_{\Q_\omega}(q_I) \geq \sup_{h \in C_0(I)_+^1}\int_0^1h(t)dt = |I|. \end{equation}
Likewise, by (\ref{CharFunctionNew}), 
\begin{equation} \tau_{\Q_\omega}(q_I) \leq \inf_{h \in C([0,1])_+^1,\ h|_I \equiv 1} \int_0^1h(t)dt  = |I|. \end{equation}
These two inequalities yield (\ref{CharFunction4}).
\end{proof}

\begin{remark*} The family of projections $(q_I)$ in Lemma \ref{lem:CharFunction} is not canonical as they depend on the choice of lift for $k$ in $\ell^\infty(\Q)$.  
\end{remark*}

We now turn to the universal coefficient theorem (UCT) which is a key component of the stable uniqueness theorem in Section \ref{Sect:SU}. The definition, given below for completeness, is in terms of a natural sequence relating Kasparov's bivariant $KK$-groups to $K$-theory.

\begin{defn}[{cf.\ \cite[Theorem 1.17]{RS:DMJ}}]
\label{UCT:def}
A separable $\mathrm{C}^*$-algebra $A$ is said to satisfy the \emph{universal coefficient theorem} (UCT) if
\begin{align}
\notag
0 &\to \mathrm{Ext}(K_*(A),K_{*+1}(B)) \to \\
&\qquad\qquad KK(A,B) \to \mathrm{Hom}(K_*(A),K_*(B)) \to 0
\end{align}
is an exact sequence, for every $\sigma$-unital $\mathrm C^*$-algebra $B$.
\end{defn}

In \cite{RS:DMJ}, Rosenberg and Schochet showed that a large class of separable $\mathrm{C}^*$-algebras satisfy the UCT (the collection of $\mathrm{C}^*$-algebras satisfying the UCT is now known as the UCT class), and established closure properties; their work shows that the UCT class consists of precisely those separable $\mathrm{C}^*$-algebras which are $KK$-equivalent to abelian $\mathrm{C}^*$-algebras (this precise statement can be found as \cite[Theorem 23.10.5]{B:KThy}). In particular, note that if $A$ satisfies the UCT, then so too does the unitisation $C_0((0,1),A)^\sim$ of the suspension of $A$.\footnote{If $A$ is $KK$-equivalent to the abelian $\mathrm{C}^*$-algebra $B$, then $C_0((0,1),A)^\sim$ is $KK$-equivalent to the abelian $\mathrm{C}^*$-algebra $C_0((0,1),B)^\sim$ (for example, by \cite[19.1.2 (c) and (d)]{B:KThy}).}

It was shown by Tu in \cite[Lemma 3.5 and Proposition 10.7]{Tu:KT} that the $\mathrm C^*$-algebra of each amenable \'etale groupoid satisfies the UCT.  In particular, this includes the group $\mathrm C^*$-algebras of countable discrete amenable groups.

We end this section by setting out our conventions regarding matrix amplifications which will appear frequently throughout the paper.
\begin{notation}\label{notation}
 For a $^*$-algebra $A$ and $n\in\mathbb N$, we freely identify $A\otimes M_n$ and $M_n(A)$.   Given $n$ elements, $a_1,\dots,a_n\in A$, we write $a_1\oplus a_2\oplus\cdots\oplus a_n$ for the diagonal matrix in $M_n(A)$ with entries $a_1,a_2,\dots,a_n$. Similar conventions apply to maps: given $\theta_1,\dots,\theta_n:A\to B$, write $\theta_1\oplus\cdots\oplus\theta_n:A\to M_n(B)$ for the map given by $a\mapsto \theta_1(a)\oplus\cdots\oplus \theta_n(a)$.  When the elements involved are constant, we sometimes use $a^{\oplus n}$ to denote the diagonal element of $M_n(A)$ with constant entry $a$, and likewise, given $\theta:A \to B$, we write $\theta^{\oplus n}$ for the map $a\mapsto \theta(a)^{\oplus n}$.
\end{notation}
\section{Two Lebesgue trace cones}\label{Sect:LebCones}

The central purpose of this section is to produce the two Lebesgue trace cones over $A$ in $\Q_\omega$ (obtained in Lemma \ref{MapsExist} below) with unital sum, which are the inputs into the ``stable uniqueness across the interval'' procedure.  We start by using Cuntz subequivalence, originating in \cite{C:MA} and further developed in \cite{R:JFA2,CEI:Crelle} (we refer to \cite{APT:Contemp} for a full account of the Cuntz semigroup), and strict comparison of positive elements (which has its origins in \cite{B:LMSnotes}) to record two technical observations.

Recall that for $a,b\in A_+$, $a$ is said to be \emph{Cuntz below} $b$ if there exists a sequence $(x_n)$ of elements of $A$ such that $a=\lim_n x_nbx_n^*$; $a$ is called \emph{Cuntz equivalent} to $b$ if $a$ is Cuntz below $b$ and $b$ is Cuntz below $a$.  For $\varepsilon>0$, $(a-\varepsilon)_+$ denotes the functional calculus output given by applying the function $h(t)=\max\{t-\varepsilon,0\}$ to $a$. With this notation, $a$ is Cuntz below $b$ in $A$ if and only if for all $\varepsilon>0$, there exists $v\in A$ such that $(a-\varepsilon)_+=vbv^*$ (see \cite[Proposition 2.4]{R:JFA2}, which shows that $a$ is Cuntz below $b$ if and only if for all $\varepsilon>0$, there exist $\delta>0$ and $w\in A$ such that $(a-\varepsilon)_+=w(b-\delta)_+w^*$).

A $\mathrm{C}^*$-algebra $A$ is said to have \emph{strict comparison of positive elements with respect to bounded traces}, if whenever $k \in \mathbb N$ and $a,b \in (A \otimes M_k)_+$ satisfy
\begin{align}
d_{\tau \otimes \mathrm{Tr}_k}(a) < d_{\tau \otimes \mathrm{Tr}_k}(b)
\end{align}
for all $\tau\in T(A)$ (where $d_\tau(a) := \lim_{n\to\infty} \tau(a^{1/n})$), it follows that $a$ is Cuntz below $b$ in $A\otimes M_k$.\footnote{Strict comparison is usually defined using densely defined lower semicontinuous $2$-quasitraces, but we do not need this version of the definition.} The functions $d_{\tau\otimes\mathrm{Tr}_k}$ above provide functionals on the Cuntz semigroup (\cite[Proposition 4.2]{ERS:AJM}); in particular\begin{align}\label{dtaustate}
d_{\tau\otimes\mathrm{Tr}_k}(a)+d_{\tau\otimes\mathrm{Tr}_k}(b)&=d_{\tau\otimes\mathrm{Tr}_k}(a+b),\quad a,b\in (A\otimes M_k)_+,\ ab=0;\\
d_{\tau\otimes\mathrm{Tr}_k}(a)&\leq d_{\tau\otimes\mathrm{Tr}_k}(b),\quad a\text{ is Cuntz below }b\text{ in }A\otimes M_k.\label{dtaustate.2}
\end{align}
The key example for us is that $\Q_\omega$ has strict comparison with respect to its unique bounded trace. This follows from the corresponding fact for $\Q$ (by \cite[Theorem 5.2 (a)]{R:JFA2})\footnote{Up to scalar multiplication the only lower semicontinuous dimension function on $\Q$ arises from the unique trace.} and \cite[Lemma 1.23]{BBSTWW:arXiv}, for example, which shows that this property passes to ultraproducts.

Our first use of Cuntz comparison is the following, now-standard consequence of Ciuperca and Elliott's classification of $^*$-homomorphisms from $C_0((0,1])$ to stable rank one $\mathrm C^*$-algebras by their Cuntz semigroup data, from \cite[Theorem 4]{CE:IMRN} (and extended in \cite{RS:JFA} and \cite{JST:arXiv}).
We shall use this result in Lemma \ref{MapsExist} to obtain our second Lebesgue trace cone as a unitary conjugate of the first cone.
The argument below is very similar to the proof of \cite[Lemma 5.1]{SWW:Invent}.
\begin{lemma}\label{Lem:CE}\begin{enumerate}[(i)]
\item\label{Lem:CE.1} Let $A$ be a unital $\mathrm{C}^*$-algebra of stable rank one and strict comparison of positive elements with respect to bounded traces. Suppose that $a_1,a_2\in A$ are positive contractions such that
\begin{equation} \tau(h(a_1)) = \tau(h(a_2)) > 0,\quad \tau\in T(A),\ h\in C_0((0,1])_+\setminus\{0\}. \label{e2.4}\end{equation}
Then $a_1$ and $a_2$ are approximately unitarily equivalent.
\item Suppose that $a_1,a_2\in \Q_\omega$ are positive contractions such that
\begin{equation} \tau_{\Q_\omega}(h(a_1)) = \tau_{\Q_\omega}(h(a_2)) > 0,\quad h\in C_0((0,1])_+\setminus\{0\}. \end{equation}
Then $a_1$ and $a_2$ are unitarily equivalent.\label{Qfacts.4}
\end{enumerate}
\end{lemma}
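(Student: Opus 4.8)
\textbf{Proof plan for Lemma \ref{Lem:CE} (ii), assuming part (i).}

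The plan is to reduce the ultrapower statement to a sequence of applications of part (i) in finite matrix algebras inside $\Q$, and then to upgrade the resulting \emph{approximate} unitary equivalences to an honest unitary equivalence using a saturation/reindexing argument in $\Q_\omega$. First I would fix representatives $(a_{1,n})$ and $(a_{2,n})$ of $a_1,a_2$ by positive contractions in $\Q$; after applying functional calculus with a fixed continuous function (pinching the spectrum away from $0$ is not needed, but passing to $(a_{i,n}-\tfrac1m)_+$-type cutdowns will be convenient) we may arrange the $a_{i,n}$ to lie in a common unital matrix subalgebra $M_{k_n}\subseteq\Q$. The two hypotheses $\tau_{\Q_\omega}(h(a_1))=\tau_{\Q_\omega}(h(a_2))$ translate, along $\omega$, into \emph{approximate} equality of $\tau_{M_{k_n}}(h(a_{1,n}))$ and $\tau_{M_{k_n}}(h(a_{2,n}))$ for finitely many test functions $h$ and finite tolerances; positivity $\tau_{\Q_\omega}(h(a_i))>0$ similarly gives a lower bound away from $0$ (along $\omega$) for these traces applied to a fixed strictly positive $h$. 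Since $M_{k_n}$ is a finite-dimensional, hence stable-rank-one, $\mathrm{C}^*$-algebra with strict comparison with respect to its unique trace, part (i) — or rather its quantitative shadow — should provide a unitary $u_n\in M_{k_n}$ with $\|u_n a_{1,n}u_n^*-a_{2,n}\|$ small along $\omega$.

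The technical care needed here is that part (i) as stated is a qualitative (exact-trace, approximate-unitary) statement, so I would instead run a standard ``sequence of counterexamples'' compactness argument: if no sequence of unitaries $(u_n)$ in $\Q$ conjugates $(a_{1,n})$ to $(a_{2,n})$ up to a norm error tending to $0$ along $\omega$, then extracting a subsequence and using that the unit ball of any separable $\mathrm{C}^*$-algebra is compact in the appropriate sense, one produces positive contractions $a_1',a_2'$ in a single stable-rank-one, strict-comparison $\mathrm{C}^*$-algebra (a suitable limit, e.g.\ a matricial or UHF building block, which again has stable rank one and strict comparison) with $\tau(h(a_1'))=\tau(h(a_2'))>0$ for all $h\in C_0((0,1])_+\setminus\{0\}$, yet $a_1',a_2'$ not approximately unitarily equivalent — contradicting (i). (Alternatively, and more cleanly, one applies (i) directly inside $\Q_\omega$ itself: $\Q_\omega$ has stable rank one and strict comparison with respect to its unique trace $\tau_{\Q_\omega}$, both facts recorded in the excerpt or immediate from them, so (i) gives approximate unitary equivalence in $\Q_\omega$. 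The only remaining task is then to promote ``approximate'' to ``exact''.)

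The last step promotes approximate unitary equivalence in $\Q_\omega$ to genuine unitary equivalence. This is the usual $\varepsilon$-test / reindexing phenomenon: $\Q_\omega$ is $\omega$-saturated (countably saturated), so a relation $u a_1 u^* = a_2$ that can be solved for $u$ a unitary up to arbitrarily small tolerance — which is exactly what approximate unitary equivalence in $\Q_\omega$ delivers, by choosing unitaries $u^{(m)}$ with $\|u^{(m)}a_1u^{(m)*}-a_2\|<1/m$ — can be solved exactly. Concretely, represent each $u^{(m)}$ by a sequence of unitaries in $\Q$ (recall from the excerpt that unitaries in $\Q_\omega$ lift to sequences of unitaries), and run a diagonal extraction along $\omega$ against the increasing finite data $\{a_1,a_2\}$ with shrinking tolerances $1/m$ to obtain a single sequence of unitaries in $\Q$ whose image $u\in\Q_\omega$ is a unitary satisfying $ua_1u^*=a_2$ exactly. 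The main obstacle is ensuring the limiting/reindexing argument stays inside the category where part (i) applies — i.e.\ that the building blocks one passes to genuinely retain stable rank one and strict comparison, and that the finitely-many-$h$, finite-tolerance translation of the trace hypotheses is strong enough to feed (i); once it is recognised that one may simply invoke (i) \emph{within} $\Q_\omega$ (which satisfies both structural hypotheses), this obstacle evaporates and only the routine saturation argument remains.
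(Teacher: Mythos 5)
Your ``alternatively, and more cleanly'' route, together with your final paragraph, is exactly the paper's proof: $\Q_\omega$ has stable rank one (this passes to ultrapowers) and strict comparison with respect to its unique trace, so part (i) applies directly in $\Q_\omega$ to give approximate unitary equivalence, and this is then upgraded to genuine unitary equivalence by the standard reindexing/saturation argument. The first route you sketch (reduction to matrix subalgebras and a sequence-of-counterexamples compactness argument) is a detour with real gaps --- it is not clear how to produce a single stable-rank-one, strict-comparison $\mathrm{C}^*$-algebra containing $a_1',a_2'$ with \emph{exact} trace equality on \emph{all} $h$, which is what (i) needs --- but you flag this yourself and correctly discard it in favour of applying (i) inside $\Q_\omega$.
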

\begin{proof}
(\ref{Lem:CE.1}) By \cite[Theorem 4]{CE:IMRN}, it suffices to show that $h(a_1)$ is Cuntz equivalent to $h(a_2)$ for all nonzero $h\in C_0((0,1])_+$.
For such $h$, let $\varepsilon > 0$, and define
\begin{equation} U:= \{t \in (0,1] \mid 0 < h(t) < \varepsilon\}. \end{equation}
This open set is nonempty and therefore there exists a nonzero function $g \in C_0(U)_+$ of norm one. Then for any trace $\tau\in T(A)$, we have $d_\tau(g(a_1)) \geq \tau(g(a_1)) > 0$ by (\ref{e2.4}). Since $(h(a_1)-\varepsilon)_+$ is orthogonal to $g(a_1)$, 
\begin{align}
\notag
d_\tau((h(a_1)-\varepsilon)_+) &\stackrel{\phantom{\eqref{dtaustate}}}{<} d_\tau((h(a_1)-\varepsilon)_+) + d_\tau(g(a_1)) \\
&\stackrel{\eqref{dtaustate}}{=}d_\tau((h(a_1)-\varepsilon)_++g(a_1))\notag\\
&\stackrel{\eqref{dtaustate.2}}{\leq} d_\tau(h(a_1))\notag\\
&\stackrel{\eqref{e2.4}}{=} d_\tau(h(a_2)), \quad \tau \in T(A)
\end{align}
as $(h(a_1)-\varepsilon)_++g(a_1)$ is Cuntz below $h(a_1)$.
Since $A$ has strict comparison of positive elements by bounded traces, it follows that $(h(a_1)-\varepsilon)_+$ is Cuntz below $h(a_2)$; as $\varepsilon$ is arbitrary, $h(a_1)$ is Cuntz below $h(a_2)$.
Symmetrically, we also obtain that $h(a_2)$ is Cuntz below $h(a_1)$, as required.

(\ref{Qfacts.4}) Since stable rank one passes to ultrapowers (see \cite[Lemma 2.4]{SWW:Invent}, a simple modification of \cite[Lemma 19.2.2 (1)]{L:Book} to ultrapowers), $\Q_\omega$ has stable rank one and strict comparison with respect to its unique trace $\tau_{\Q_\omega}$. The result now follows from (\ref{Lem:CE.1}), noting that a standard reindexing argument shows that two positive elements in an ultrapower are unitarily equivalent if and only if they are approximately unitarily equivalent (see \cite[Lemma 1.17 (i)]{BBSTWW:arXiv}, for example).
\end{proof}

Our second application of strict comparison is a standard computation familiar to experts; it will be used in the proof of Theorem \ref{thm:MainThm} in order to verify $\Delta$-fullness as in Definition \ref{def:Control}.

\begin{lemma}
\label{lem:TrFull}
Let $A$ be a unital $\mathrm C^*$-algebra with strict comparison of positive elements by bounded traces, such as $\Q_\omega$.
Let $a \in A_+^1$. If $m \in \mathbb N$ satisfies $\tau(a) > \tfrac2{m}$ for all $\tau\in T(A)$ then there exist $m^2$ contractions $v_1,\dots,v_{m^2}\in A$ such that
\begin{equation} 1_A= \sum_{i=1}^{m^2} v_iav_i^*. \end{equation}
\end{lemma}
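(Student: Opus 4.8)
The plan is to use strict comparison directly, exhibiting an element Cuntz below $a$ whose dimension functions exceed those of $1_A$ in a suitable matrix amplification, and then converting the resulting Cuntz subequivalence into the required explicit formula. First I would observe that the hypothesis $\tau(a) > \tfrac{2}{m}$ for all $\tau \in T(A)$ gives, for every $\tau$, that $d_\tau(a) \geq \tau(a) > \tfrac{2}{m}$, hence $d_{\tau \otimes \mathrm{Tr}_{m^2}}(a \otimes 1_{m^2}) = m^2 d_\tau(a) > 2m \geq m+1 > m = d_{\tau \otimes \mathrm{Tr}_{m^2}}(1_A \otimes 1_m)$, where I regard $1_A \otimes 1_m$ as a positive element of $A \otimes M_{m^2}$ supported on the upper-left $M_m$-corner (using $m \le m^2$). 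Since $A$ has strict comparison of positive elements with respect to bounded traces, this forces $1_A \otimes 1_m$ to be Cuntz below $a \otimes 1_{m^2} = a^{\oplus m^2}$ in $A \otimes M_{m^2}$.

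Next I would unwind the Cuntz subequivalence into the algebraic identity. Since $1_A \otimes 1_m$ is a projection (hence $(1_A\otimes 1_m - \varepsilon)_+$ is a nonzero scalar multiple of it for $\varepsilon < 1$), the characterisation of Cuntz subequivalence recalled just before Lemma~\ref{Lem:CE} yields a $w \in A \otimes M_{m^2}$ with $1_A \otimes 1_m = w (a^{\oplus m^2}) w^*$; one can arrange $w$ to be a contraction by noting $\|w(a^{\oplus m^2})w^*\| = 1$ and replacing $w$ by $w(a^{\oplus m^2})^{1/2}$ composed with a contraction, or more simply by the standard polar-decomposition trick $w = (1_A\otimes 1_m) w$ and rescaling. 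Writing $w$ in terms of matrix units as $w = \sum_{i=1}^{m} \sum_{j=1}^{m^2} e_{ij} \otimes w_{ij}$ for scalars $w_{ij} \in A$ (only the first $m$ rows matter since $w = (1_A \otimes 1_m) w$), the $(i,i)$-diagonal entry of the identity $1_A\otimes 1_m = w(a^{\oplus m^2})w^*$ reads $1_A = \sum_{j=1}^{m^2} w_{ij} a w_{ij}^*$ for each fixed $i$; taking any one such row, say $i=1$, and setting $v_j := w_{1j}$ gives $1_A = \sum_{j=1}^{m^2} v_j a v_j^*$. Each $v_j$ is a contraction because $\sum_j v_j v_j^* \le \|a\|^{-1}\sum_j v_j a v_j^* $ fails in general, so instead I note $v_j a^{1/2}$ has norm at most that of the row $(w_{1j})_j$, which is bounded by $\|w\| \le 1$; a cleaner route is to absorb $a^{1/2}$ and observe that one may take $v_j$ with $\sum_j v_j v_j^* \le 1_A$ by choosing $w$ a contraction with $ww^* \le 1_A\otimes 1_m$, whence each $v_j$ is automatically a contraction.

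The main obstacle I anticipate is purely bookkeeping: getting the contraction bound on the individual $v_j$ cleanly rather than just boundedness, and keeping the matrix-amplification indices straight (which corner of $A \otimes M_{m^2}$ carries $1_A\otimes 1_m$, and the constant $2$ versus $1$ in the trace inequality — the factor $2$ in the hypothesis is exactly what provides the slack $m^2 d_\tau(a) > 2m > m$ needed to beat the dimension function of $1_A\otimes 1_m$ strictly, with room to spare for the passage from strict inequality of $d_\tau$'s to an honest identity via the $(a-\varepsilon)_+$ approximations). None of this requires anything beyond strict comparison, equations \eqref{dtaustate}--\eqref{dtaustate.2}, and the Rørdam characterisation of Cuntz subequivalence already quoted in the text.
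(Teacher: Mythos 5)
Your Cuntz comparison step is fine: the dimension-function inequality does give $1_A \otimes 1_m \precsim a^{\oplus m^2}$ in $A \otimes M_{m^2}$. But the real content of the lemma is the one you flag and do not resolve: producing genuine \emph{contractions}. From a factorisation $1_A \otimes 1_m = w\,(a^{\oplus m^2})\,w^*$ you obtain, for each row $i\le m$, an identity $1_A = \sum_j w_{ij}\,a\,w_{ij}^*$, and the only norm you can control from this is $\|w_{ij}a^{1/2}\| \le 1$ (the row $j\mapsto w_{ij}a^{1/2}$ is a coisometry), not $\|w_{ij}\|\le 1$. Your proposed fix --- that $w$ may be chosen to be a contraction with $ww^* \le 1_A\otimes 1_m$ --- is false whenever $\|a\|$ is bounded away from $1$: already for $A = \mathbb C$ and $a=\tfrac12$, any $w$ with $w(\tfrac12\cdot 1_{m^2})w^* = 1\otimes 1_m$ satisfies $ww^* = 2(1\otimes 1_m)$, so $\|w\| = \sqrt2$. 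So the step ``whence each $v_j$ is automatically a contraction'' has no justification, and the argument does not close.

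The missing idea is a functional-calculus renormalisation, not a different choice of matrix amplification. The paper works in $A\otimes M_m$ and compares $1_A\oplus 0^{\oplus(m-1)}$ with $((a-\delta)_+^2)^{\oplus m}$, where $\delta := \tfrac12\min_\tau\tau(a) > \tfrac1m$. This produces $b_1,\dots,b_m$ with $1_A = \sum_j b_j(a-\delta)_+^2 b_j^*$ and hence $\|b_j(a-\delta)_+\|\le 1$. Then one introduces $h\in C([0,1])$ with $h(t)^2t = 1$ for $t\ge\delta$ and $\|h\|_\infty = \delta^{-1/2}$, and sets $c_j := m^{-1/2}b_j(a-\delta)_+h(a)$. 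The algebraic identity $(a-\delta)_+h(a)\,a\,h(a)(a-\delta)_+ = (a-\delta)_+^2$ converts the formula into $1_A = m\sum_j c_j a c_j^*$, while the bound $\|c_j\| \le m^{-1/2}\cdot 1\cdot\delta^{-1/2} = (m\delta)^{-1/2} < 1$ uses precisely the factor $2$ in the hypothesis $\tau(a) > \tfrac2m$ to force $m\delta > 1$; duplicating each $c_j$ $m$ times yields the $m^2$ contractions. Note that if you run this device starting from your $m^2$-term identity in $M_{m^2}$, you would get $m^3$ contractions, not $m^2$; the choice of $M_m$ and the $m^{-1/2}$ normalisation are tuned together. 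Without some such device --- and you have not supplied one --- the passage from Cuntz subequivalence to a sum of conjugates by contractions does not go through.
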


\begin{proof}
Set $\delta:=\frac12 \min_{\tau\in T(A)}\tau(a)$ so that $m\delta>1$.\footnote{This minimum exists as $T(A)$ is compact.} For $\tau\in T(A)$, we have
\begin{equation} d_\tau((a-\delta)_+^2)\geq \tau((a-\delta)_+)\geq\tau(a)-\delta\geq \delta, \end{equation}
so that \begin{equation}
d_{\tau\otimes \mathrm{Tr}_m}(((a-\delta)^2)^{\oplus m})>d_{\tau\otimes \mathrm{Tr}_m}(1_A\oplus 0_A^{\oplus(m-1)}).
\end{equation}
Strict comparison of positive elements shows that $1_A\oplus 0_A^{\oplus (m-1)}$ is Cuntz below $((a-\delta)^2)^{\oplus m}$ in $A\otimes M_m$ so that (as $(1_A-\varepsilon)_+=(1-\varepsilon)1_A$), there exist $b_1,\dots,b_m \in A$ such that
\begin{equation} 1_A = \sum_{j=1}^m b_j(a-\delta)_+^2b_j^*, \end{equation}
by \cite[Proposition 2.4]{R:JFA2}, as discussed after the definition of Cuntz comparison above. In particular, $\|b_j(a-\delta)_+\| \leq 1$ for each $j$.

Define $h \in C([0,1])$ by
\begin{equation} h(t) := \begin{cases} \frac1{\sqrt{t}}, \quad &t \geq \delta; \\ \frac t{\delta^{3/2}},\quad &t \in [0,\delta]. \end{cases} \end{equation}
Note that $h(t)^2t=1$ for $t \geq \delta$, so $(a-\delta)_+h(a)^2a=(a-\delta)_+$.  Writing $c_j:=m^{-1/2}b_j(a-\delta)_+h(a)$, which is a contraction as $\|c_j\|\leq m^{-1/2}\|h(a)\|\leq(m\delta)^{-1/2}<1$, we have
\begin{align}
\notag
1_A &= \sum_{j=1}^m b_j(a-\delta)_+^2b_j^* \\
&= m\sum_{j=1}^mc_jac_j^*,
\end{align}
as required.
\end{proof}

Our two Lebesgue trace cones are obtained from the tracially large order zero maps constructed in \cite[Proposition 3.2]{SWW:Invent} using Connes' uniqueness of the injective II$_1$ factor (\cite{C:Ann}). The form we need is given in Proposition \ref{SWW-Trace} below.

\begin{defn}[{\cite[Definition 1.3]{WZ:MJM}}]
\label{def:ord0}
Let $A,B$ be $\mathrm C^*$-algebras. A completely positive map $\phi:A \to B$ is said to be \emph{order zero} if for every $a,b \in A_+$ with $ab=0$, one has $\phi(a)\phi(b)=0$.
\end{defn}

Since traces on a unital $\mathrm{C}^*$-algebra $A$ form a weak$^*$-compact convex set, $T(A)$ is the weak$^*$-closed convex hull of extremal traces on $A$ by the Krein--Milman theorem. Recall that $\tau\in T(A)$ is extremal if and only if the associated GNS representation $\pi_\tau(A)''$ is a factor (\cite[Theorem 6.7.3]{Dixmier:C*Book}).

\begin{prop}\label{SWW-Trace}
Let $A$ be a separable, unital and nuclear $\mathrm{C}^*$-algebra and let $\tau_A$ be a trace on $A$.
Then there exists a c.p.c.\ order zero map $\Omega:A\rightarrow\Q_\omega$ such that
\begin{equation}\label{SWW-Trace.1}
\tau_{\Q_\omega}(\Omega(a)\Omega(1_A)^{n-1}) =\tau_A(a),\quad a\in A,\ n\in\mathbb N.
\end{equation}
\end{prop}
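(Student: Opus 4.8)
The plan is to invoke the main technical output of \cite[Section 3]{SWW:Invent} and then repackage it in the required form. Concretely, \cite[Proposition 3.2]{SWW:Invent} (whose proof rests on Connes' uniqueness of the injective $\mathrm{II}_1$ factor \cite{C:Ann}) produces, for each extremal trace $\tau$ on $A$, a c.p.c.\ order zero map $A\to\Q_\omega$ which is tracially large with respect to the trace $\tau_{\Q_\omega}$ in the sense that the functional $a\mapsto\tau_{\Q_\omega}(\Omega(a))$ recovers $\tau$; more precisely, the construction gives \eqref{SWW-Trace.1} for extremal $\tau_A$ because on the weak closure one is comparing with the GNS construction of a factor, which by injectivity is the hyperfinite $\mathrm{II}_1$ factor $\mathcal R$, and $\mathcal R$ embeds into $\Q_\omega$ trace-preservingly. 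The identity \eqref{SWW-Trace.1} for all $n$ (not just $n=1$) is the statement that $\Omega$ is order zero and ``supported'' in a hereditary subalgebra on which $\Omega(1_A)$ acts like a unit for tracial purposes; this is exactly the form in which the order zero maps are delivered in \cite{SWW:Invent}, using the structure theorem for order zero maps (a c.p.c.\ order zero map $\Omega$ corresponds to a $^*$-homomorphism $\pi$ from the cone $C_0((0,1],A)$, with $\Omega(a)=\pi(\mathrm{id}_{(0,1]}\otimes a)$, and $\Omega(1_A)^{n-1}\Omega(a)=\pi(\mathrm{id}^{n}\otimes a)$).

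For a general (not necessarily extremal) trace $\tau_A$, I would reduce to the extremal case by the following convexity argument. By the Krein--Milman theorem $\tau_A$ lies in the weak$^*$-closed convex hull of the extremal traces; a standard separability-and-reindexing argument lets us write $\tau_A=\sum_{j=1}^\infty \lambda_j\tau_j$ (or a weak$^*$-limit of such finite convex combinations) with each $\tau_j$ extremal, $\lambda_j>0$, $\sum\lambda_j=1$. For each $j$ fix an order zero map $\Omega_j:A\to\Q_\omega$ as above with $\tau_{\Q_\omega}(\Omega_j(a)\Omega_j(1_A)^{n-1})=\tau_j(a)$. Using that $\Q_\omega$ absorbs $\Q$ and admits orthogonal families of projections of any prescribed trace (Lemma \ref{lem:CharFunction} or, more simply, the fact that $q\Q_\omega q\cong\Q_\omega$ for any nonzero projection $q$, Proposition \ref{prop:Qfacts}), choose pairwise orthogonal projections $p_j\in\Q_\omega$ with $\tau_{\Q_\omega}(p_j)=\lambda_j$ and, via the isomorphisms $p_j\Q_\omega p_j\cong\Q_\omega$, transport $\Omega_j$ to a c.p.c.\ order zero map $\widetilde\Omega_j:A\to p_j\Q_\omega p_j$ with $\tau_{p_j\Q_\omega p_j}(\widetilde\Omega_j(a)\widetilde\Omega_j(1_A)^{n-1})=\tau_j(a)$, i.e.\ $\tau_{\Q_\omega}(\widetilde\Omega_j(a)\widetilde\Omega_j(1_A)^{n-1})=\lambda_j\tau_j(a)$. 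Then $\Omega:=\sum_j\widetilde\Omega_j$ converges (the summands being orthogonal and each contractive), is c.p.c.\ and order zero (orthogonality of ranges preserves the order zero relation), and $\tau_{\Q_\omega}(\Omega(a)\Omega(1_A)^{n-1})=\sum_j\lambda_j\tau_j(a)=\tau_A(a)$ for all $n$, as required.

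The step I expect to be the main obstacle is making the passage from \cite[Proposition 3.2]{SWW:Invent} to the clean statement \eqref{SWW-Trace.1} genuinely rigorous, in two respects: first, ensuring the output of \cite{SWW:Invent} really is an honest c.p.c.\ order zero map into $\Q_\omega$ (rather than, say, a map into a tracial von Neumann completion or a family of approximate maps), which requires the nuclearity of $A$ via the Choi--Effros lifting theorem to produce liftings through $\ell^\infty(\Q)$ and a reindexing argument to upgrade approximate order zero/trace conditions to exact ones in the ultrapower; and second, verifying the $n$-fold identity $\tau_{\Q_\omega}(\Omega(a)\Omega(1_A)^{n-1})=\tau_A(a)$ uniformly in $n$, which is where one must use the order zero structure theorem to see that $\Omega(1_A)$ behaves like a projection for the purposes of $\tau_{\Q_\omega}$ (equivalently, that the supporting $^*$-homomorphism from the cone is ``Lebesgue'' in the relevant sense). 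Everything else -- the convex decomposition, the orthogonal projections, and assembling $\Omega$ -- is routine given Proposition \ref{prop:Qfacts} and the uniqueness of the trace on $\Q_\omega$.
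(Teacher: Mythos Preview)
Your overall strategy matches the paper's: handle extremal traces via \cite[Proposition~3.2]{SWW:Invent}, then assemble a general trace from convex combinations using orthogonal projections in $\Q_\omega$. Two points need correction.

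First, in the extremal case you assert that the GNS closure $\pi_{\tau_A}(A)''$ is always the hyperfinite $\mathrm{II}_1$ factor $\mathcal R$. This is false: an extremal trace may have a type~I factor (a matrix algebra) as its GNS closure, and \cite[Proposition~3.2]{SWW:Invent} only applies in the type~$\mathrm{II}_1$ situation. The paper treats the type~I case separately and trivially, by taking $\Omega$ to be the composition of $\pi_{\tau_A}$ with a unital embedding of the matrix algebra into $\Q_\omega$. Also, your justification of \eqref{SWW-Trace.1} for $n>1$ is too vague: the precise output of \cite{SWW:Invent} is that $\tau_{\Q_\omega}\circ\Omega=\tau_A$ together with $1_{\Q_\omega}-\Omega(1_A)$ lying in the trace ideal, and the paper deduces the $n$-fold identity from this by a one-line Cauchy--Schwarz argument (applied to $y=\Omega(a)\Omega(1_A)^{n-2}$); your appeal to ``$\Omega(1_A)$ behaves like a projection'' is the right intuition but not a proof.

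Second, and more seriously, your passage to a general trace writes $\tau_A=\sum_{j=1}^\infty\lambda_j\tau_j$ as a countable convex combination of extremal traces. This decomposition does not exist in general; Krein--Milman only guarantees approximation by \emph{finite} convex combinations. Your parenthetical ``(or a weak$^*$-limit of such finite convex combinations)'' is the correct alternative, but you cannot then define a single $\Omega$ by an infinite orthogonal sum. The paper resolves this by working with a finite approximation $\sum_{j=1}^m\lambda_j\tau_j$ (good to within $\varepsilon$ on a finite set), constructing the corresponding finite orthogonal sum $\Omega=\sum_{j=1}^m\Omega_j$, and then invoking Kirchberg's $\varepsilon$-test to reindex and obtain a single c.p.c.\ order zero map satisfying \eqref{SWW-Trace.1} exactly. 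Your proposal needs this reindexing step at the level of the convex approximation, not just (as you suggest in your final paragraph) at the level of lifting through $\ell^\infty(\Q)$.
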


\begin{proof}
The proof of \cite[Proposition 3.2]{SWW:Invent}, establishes the proposition in the case that $\tau_A$ is an extremal trace on $A$  for which the GNS representation $\pi_{\tau_A}(A)''$ is type II$_1$ (and hence a copy of the hyperfinite II$_1$ factor) except that \eqref{SWW-Trace.1} is stated by the conditions that $\tau_{\Q_\omega}\circ\Omega=\tau_A$ and $1_{\Q_\omega}-\Omega(1_A)$ is in the kernel associated to $\tau_{\Q_\omega}$.\footnote{The key point is that the argument in \cite[Proposition 3.2]{SWW:Invent} is readily seen to be valid when $\pi_{\tau_A}(A)''$ is a copy of the hyperfinite II$_1$ factor; this is the only use of the hypothesis in the statement of \cite[Proposition 3.2]{SWW:Invent} that $A$ has no finite dimensional quotients.}
To obtain \eqref{SWW-Trace.1} for $n>1$ from these conditions, by the Cauchy--Schwarz inequality we have
\begin{align}
\notag
|\tau_{\Q_\omega}(y(1_{\Q_\omega}-\Omega(1_A)))| &\leq \tau_{\Q_\omega}(y^*y)^{1/2}\tau_{\Q_\omega}((1_{\Q_\omega}-\Omega(1_A))^2)^{1/2} \\
&=0,\quad y\in\Q_\omega,
\end{align}
as $1_{\Q_\omega}-\Omega(1_A)$ lies in the kernel associated $\tau_{\Q_\omega}$. Setting $y:=\Omega(a)\Omega(1_A)^{n-2}$ allows us to prove \eqref{SWW-Trace.1} inductively.

In contrast, if $\tau_A$ is an extremal trace on $A$ such that $\pi_{\tau_A}(A)''$ is a type I factor, we may define $\Omega$ to be the composition of the unital embedding $\pi_{\tau_A}:A\rightarrow \pi_{\tau_A}(A)''$ with some unital (and necessarily trace preserving) embedding $\pi_{\tau_A}(A)''\hookrightarrow \Q_\omega$. This is a $^*$-homomorphism with $\tau_{\Q_\omega}\circ\Omega=\tau_A$. Thus the proposition holds when $\tau_A$ is an extremal trace on $A$.  

The case of a general trace $\tau_A$ on $A$ follows by approximating by convex combinations and a standard reindexing argument, as we now explain.
The reindexing argument we use is Kirchberg's $\varepsilon$-test from \cite[Lemma A.1]{K:Abel} or \cite[Lemma 3.1]{KR:Crelle}.  For $i\in \mathbb N$, write $X_i$ for the collection of $^*$-linear maps $A\rightarrow \Q$. Then, as set out in \cite[Lemma 1.12]{BBSTWW:arXiv}, there exists a countable collection of functions $f^{(k)}_i:X_i\rightarrow [0,\infty]$ indexed by $k,i\in\mathbb N$, such that a sequence $(\phi_i)_{i=1}^\infty$ in $\prod_{i=1}^\infty X_i$ (the set product) induces a c.p.c.\ order zero map $A\rightarrow\Q_\omega$ if and only if $\lim_{i\rightarrow\omega}f^{(k)}_i(\phi_i)=0$ for all $k\in\mathbb N$.  Fixing a countable dense subset $(a_k)_{k=1}^\infty$ in $A$, define functions $g_i^{(k,n)}:X_i\rightarrow[0,\infty]$ for $i,k,n\in\mathbb N$ by
\begin{equation}
g_i^{(k,n)}(\phi_i):=|\tau_\Q(\phi_i(a_k)\phi_i(1_A)^{(n-1)})-\tau_A(a_k)|.
\end{equation}
In this way, a sequence $(\phi_i)_{i=1}^\infty$ in $\prod_{i=1}^\infty X_i$ induces a c.p.c.\ order zero map $A\rightarrow\Q_\omega$ satisfying \eqref{SWW-Trace.1} if and only if 
\begin{equation}\label{SWW-Trace.3}
\lim_{i\rightarrow\omega}f^{(k)}_i(\phi_i)=\lim_{i\rightarrow\omega}g^{(k,n)}_i(\phi_i)=0,
\end{equation}
for all $k,n\in\mathbb N$.

Now fix $k\in\mathbb N$ and $\varepsilon>0$.  Find $m\in\mathbb N$, positive numbers $\lambda_1,\dots,\lambda_m$ with $\sum_{j=1}^m\lambda_j=1$, and extremal traces $\tau_1,\dots,\tau_m$ on $A$ such that 
\begin{equation}\label{SWW-Trace.2}
\Big|\sum_{j=1}^m\lambda_j\tau_j(a_r)-\tau_A(a_r)\Big|<\varepsilon,\quad r=1,\dots,k.
\end{equation}

Choose pairwise orthogonal projections $p_1,\dots,p_m\in\Q_\omega$ which sum to $1_{\Q_\omega}$ and satisfy $\tau_{\Q_\omega}(p_j)=\lambda_j$ for each $j$. Then $p_j\Q_\omega p_j\cong \Q_\omega$ by Proposition \ref{prop:Qfacts} (\ref{prop:Qfacts.1}).  By the first three paragraphs of this proof, for each $j$ we can find a c.p.c.\ order zero map $\Omega_j:A\rightarrow p_j\Q_\omega p_j$ such that
\begin{equation}
\tau_{p_j\Q_\omega p_j}(\Omega_j(a)\Omega_j(1_A)^{n-1})=\tau_j(a),\quad a\in A,\ n\in\mathbb N.
\end{equation}
Define $\Omega:A\rightarrow\Q_\omega$ by $\Omega(a)=\sum_{j=1}^m\Omega_j(a)$ for $a\in A$. Since the $p_j$ are pairwise orthogonal, $\Omega$ is a c.p.c.\ order zero map, and it satisfies
\begin{align}
\notag
\tau_{\Q_\omega}(\Omega(a)\Omega(1_A)^{n-1})&= \sum_{j=1}^m \lambda_j \tau_{p_j\Q_\omega p_j}(\Omega_j(a)\Omega_j(1_A)^{n-1}) \\
&= \sum_{j=1}^m\lambda_j\tau_j(a),\quad a\in A,\ n\in\mathbb N.
\end{align}
Combining this with (\ref{SWW-Trace.2}), it follows that any lift of $\Omega$ to a sequence $(\phi_i)_{i=1}^\infty$ in $\prod_{i=1}^\infty X_i$ has $\lim_{i\rightarrow\omega}g^{(r,n)}_i(\phi_i)\leq\varepsilon$ for all $n\in\mathbb N$ and $r=1,\dots,k$ and $\lim_{i\rightarrow\omega}f^{(l)}_i(\phi_i)=0$ for all $l$.  Thus by Kirchberg's $\varepsilon$-test, there exists a sequence $(\phi_i)_{i=1}^\infty$ in $\prod_i X_i$ satisfying (\ref{SWW-Trace.3}), and hence providing the required c.p.c.\ order zero map $\Omega:A\rightarrow\Q_\omega$ satisfying (\ref{SWW-Trace.1}).
\end{proof} 

Our two Lebesgue trace cones over $A$ will be constructed so that their scalar parts agree with a $^*$-homomorphism on $C([0,1])$.  We encapsulate this property in the following definition.
\begin{defn}
\label{def:Compatible}
Let $A$ and $E$ be unital $\mathrm{C}^*$-algebras and let $I\subset\mathbb [0,1]$ be an interval. Let $\theta:C([0,1])\rightarrow E$ be a unital $^*$-homomorphism.  We say that a $^*$-homomorphism $\nu:C_{0}(I,A) \rightarrow E$ is \emph{compatible} with $\theta$ if \begin{equation}
\nu(hx)=\theta(h)\nu(x)=\nu(x)\theta(h),\quad h\in C([0,1]),\ x \in C_{0}(I,A).
\end{equation}
\end{defn}

The essential feature of order zero maps we need in the next proposition is their correspondence with $^*$-homomorphisms from cones as in \cite[Corollary 3.1]{WZ:MJM}: a c.p.c.\ map $\Omega:A\rightarrow B$ is order zero if and only if there exists a ${}^*$-homomorphism $\pi_\Omega:C_0((0,1]) \otimes A \to B$ such that, 
\begin{equation} \label{ord0.2}
\Omega(a)=\pi_\Omega(\id_{[0,1]} \otimes a), \quad a \in A.\footnote{We use $\id_{[0,1]}$ for the identity function on $[0,1]$ satisfying $\id_{[0,1]}(t)=t$. This should not be confused with $1_{C([0,1])}$, the unit of $C([0,1])$.}
\end{equation}

Given a bounded interval $I\subset\mathbb R$, we write $\tau_{\leb}$ for the tracial functional on $C_0(I)$ induced by Lebesgue measure on $I$, i.e.
\begin{equation}
\label{eq:LebTraceDef1}
\tau_{\leb}(h) := \int_I h(t)\,dt,\quad h\in C_0(I). \end{equation}
This is a trace when $I$ has length $1$ (in particular for $I=[0,1]$, $(0,1]$ or $[0,1)$).  A positive contraction $a$ in a unital $\mathrm{C}^*$-algebra $A$ is said to have Lebesgue spectral measure with respect to $\tau_A\in T(A)$ if
\begin{equation}\label{e1.38}
\tau_A(h(a))=\tau_{\leb}(h), \quad h \in C([0,1]).
\end{equation}
Note that $a$ has Lebesgue spectral measure with respect to $\tau_A$ if and only if $1_A-a$ does.

\begin{lemma}\label{MapsExist}
Let $A$ be a separable, unital and nuclear $\mathrm{C}^*$-algebra and $\tau_A\in T(A)$. Then there are $^*$-homomorphisms 
\begin{equation}
\Theta:C([0,1])\rightarrow\Q_\omega,\ \grave{\Phi}:C_0([0,1),A)\rightarrow \Q_\omega,\ \acute{\Phi}:C_0((0,1],A)\rightarrow\Q_\omega
\end{equation}
such that:
\begin{enumerate}[(i)]
\item $\Theta$ is unital and $\grave{\Phi},\acute{\Phi}$ are compatible with $\Theta$;\label{MapsExist.1}
\item $\tau_{\Q_\omega}\circ\grave\Phi = \tau_{\leb} \otimes \tau_A$;  \label{MapsExist.2}
\item $\tau_{\Q_\omega}\circ\acute\Phi = \tau_{\leb} \otimes \tau_A$.\label{MapsExist.3}
\end{enumerate}
\end{lemma}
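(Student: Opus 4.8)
The plan is to build $\acute\Phi$ and $\Theta$ directly from Proposition~\ref{SWW-Trace}, applied not to $A$ but to $B:=C([0,1])\otimes A$, and then to obtain $\grave\Phi$ from $\acute\Phi$ by reflecting the interval $[0,1]$ and conjugating by a unitary produced by Lemma~\ref{Lem:CE}\,(\ref{Qfacts.4}), as suggested just before that lemma. Concretely, $B$ is again separable, unital and nuclear, and $\tau_B:=\tau_{\leb}\otimes\tau_A$ is a trace on it, so Proposition~\ref{SWW-Trace} supplies a c.p.c.\ order zero map $\Omega_B:B\to\Q_\omega$ with $\tau_{\Q_\omega}(\Omega_B(x)\Omega_B(1_B)^{n-1})=\tau_B(x)$ for all $x\in B$ and $n\in\mathbb N$; in particular $\tau_{\Q_\omega}(\Omega_B(1_B)^n)=1$ for all $n$, so $\Omega_B(1_B)^n-1_{\Q_\omega}$ lies in the kernel of $\tau_{\Q_\omega}$. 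By the order zero structure theorem \eqref{ord0.2}, $\Omega_B$ corresponds to a $^*$-homomorphism $\pi_{\Omega_B}:C_0((0,1])\otimes C([0,1])\otimes A\to\Q_\omega$.

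The key step is to \emph{merge} the cone variable with the auxiliary $C([0,1])$-variable. Since $\id_{(0,1]}\otimes\id_{[0,1]}$ is a positive contraction in $C_0((0,1])\otimes C([0,1])$, there is a $^*$-homomorphism $C_0((0,1])\to C_0((0,1])\otimes C([0,1])$ sending the canonical generator $\id_{(0,1]}$ to it; tensoring with $\id_A$ and composing with $\pi_{\Omega_B}$ defines a $^*$-homomorphism $\acute\Phi:C_0((0,1],A)\to\Q_\omega$ with $\acute\Phi(\id_{(0,1]}^m\otimes a)=\Omega_B(1_B)^{m-1}\,\Omega_B(\id_{[0,1]}^m\otimes a)$ for $m\geq 1$, $a\in A$. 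Set $k:=\acute\Phi(\id_{(0,1]}\otimes 1_A)$, a positive contraction, and let $\Theta:C([0,1])\to\Q_\omega$ be functional calculus by $k$ (automatically unital). Using that $\Omega_B(1_B)^{m-1}-1_{\Q_\omega}$ lies in the kernel of $\tau_{\Q_\omega}$ and the Cauchy--Schwarz inequality,
\[
\tau_{\Q_\omega}(\acute\Phi(\id_{(0,1]}^m\otimes a))=\tau_{\Q_\omega}(\Omega_B(\id_{[0,1]}^m\otimes a))=\tau_B(\id_{[0,1]}^m\otimes a)=\Big(\int_0^1 t^m\,dt\Big)\tau_A(a),\quad m\geq 1,\ a\in A,
\]
and by linearity, density of polynomials vanishing at $0$ in $C_0((0,1])$, and continuity, $\tau_{\Q_\omega}\circ\acute\Phi=\tau_{\leb}\otimes\tau_A$, which is (iii). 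Taking $a=1_A$ shows that $k$ has Lebesgue spectral measure with respect to $\tau_{\Q_\omega}$ and that the restriction of $\Theta$ to $C_0((0,1])$ is the scalar part of $\acute\Phi$; compatibility of $\acute\Phi$ with $\Theta$ then reduces to checking $\acute\Phi(h\cdot(\id_{(0,1]}\otimes a))=\Theta(h)\,\acute\Phi(\id_{(0,1]}\otimes a)=\acute\Phi(\id_{(0,1]}\otimes a)\,\Theta(h)$ on the generating set $\{\id_{(0,1]}\otimes a:a\in A\}$, which is routine since $C_0((0,1])\otimes C([0,1])$ is commutative. This gives (i) for $\acute\Phi$.

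To get $\grave\Phi$, let $\sigma:[0,1]\to[0,1]$ be $\sigma(t)=1-t$, with induced $^*$-isomorphisms $\sigma^*$ of $C([0,1])$ and from $C_0([0,1),A)$ onto $C_0((0,1],A)$, intertwining the module actions. Then $\acute\Phi\circ\sigma^*:C_0([0,1),A)\to\Q_\omega$ is compatible with $\Theta\circ\sigma^*$, which is functional calculus by $1_{\Q_\omega}-k$, and, since Lebesgue measure is $\sigma$-invariant, induces $\tau_{\leb}\otimes\tau_A$. Both $k$ and $1_{\Q_\omega}-k$ are positive contractions with Lebesgue spectral measure, so $\tau_{\Q_\omega}(h(k))=\tau_{\Q_\omega}(h(1_{\Q_\omega}-k))=\int_0^1 h\,dt>0$ for every nonzero $h\in C_0((0,1])_+$; hence Lemma~\ref{Lem:CE}\,(\ref{Qfacts.4}) gives a unitary $w\in\Q_\omega$ with $w(1_{\Q_\omega}-k)w^*=k$, so $\mathrm{Ad}(w)\circ\Theta\circ\sigma^*=\Theta$. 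Setting $\grave\Phi:=\mathrm{Ad}(w)\circ\acute\Phi\circ\sigma^*$ then gives a $^*$-homomorphism $C_0([0,1),A)\to\Q_\omega$ that is compatible with $\Theta$ and, as $\tau_{\Q_\omega}$ is invariant under $\mathrm{Ad}(w)$, induces $\tau_{\leb}\otimes\tau_A$; this completes (i)--(iii).

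The main obstacle I anticipate is conceptual: applying Proposition~\ref{SWW-Trace} directly to $A$ yields a cone over $A$ whose scalar part is concentrated at an endpoint (its generating positive contraction is trivial modulo the kernel of $\tau_{\Q_\omega}$) rather than Lebesgue-distributed, so $\tau_{\leb}\otimes\tau_A$ is not reproduced; inserting the auxiliary $C([0,1])$-factor and merging it with the cone variable via $(s,t)\mapsto st$ is what spreads the scalar part out while keeping the trace computation exact. The remaining work is the somewhat delicate bookkeeping with these non-unital tensor products of cones---that the merging homomorphism is well defined, that $\acute\Phi$ is genuinely compatible with $\Theta$, and that powers of $\Omega_B(1_B)$ can be absorbed modulo the trace kernel.
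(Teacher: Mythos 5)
Your proof is correct; it reaches the same two Lebesgue trace cones by a genuinely different construction of $\acute\Phi$. The paper applies Proposition~\ref{SWW-Trace} to $A$ alone, separately picks a positive contraction $k\in\Q$ with Lebesgue spectral measure with respect to $\tau_\Q$, and defines $\acute\Phi:=(\pi_k\otimes\pi_\Omega)(\,\cdot\,\circ\alpha)$ into $\Q\otimes\Q_\omega$, then invokes $\Q\otimes\Q\cong\Q$ to land back in $\Q_\omega$. You instead apply Proposition~\ref{SWW-Trace} to $B:=C([0,1])\otimes A$ with the trace $\tau_{\leb}\otimes\tau_A$, so that the Lebesgue-distributed scalar contraction $k=\Omega_B(\id_{[0,1]}\otimes 1_A)$ is produced automatically by the trace condition, and $\acute\Phi$ lands directly in $\Q_\omega$. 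Both arguments hinge on the same $\alpha$-merge device $\alpha(s,t)=st$, which --- as you rightly identify in your closing remark --- is what spreads the scalar part of the cone along the interval; your variant internalises the Lebesgue-distributed element by enlarging the input to Proposition~\ref{SWW-Trace}, trading the external $\Q\otimes\Q_\omega\hookrightarrow\Q_\omega$ embedding and the hand-picked $k\in\Q$ for a Cauchy--Schwarz absorption of $\Omega_B(1_B)^{m-1}$ (an argument the paper already runs internally, in the proof of Proposition~\ref{SWW-Trace}, to pass from the case $n=1$ to general $n$). The reflection-and-conjugation step producing $\grave\Phi$ via Lemma~\ref{Lem:CE}~(\ref{Qfacts.4}) is identical to the paper's.
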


\begin{proof}
By Proposition \ref{SWW-Trace}, there exists a c.p.c.\ order zero map $\Omega:A\rightarrow\Q_\omega$ satisfying \eqref{SWW-Trace.1}.
Let $\pi_\Omega:C_0((0,1],A) \to \Q_\omega$ be the $^*$-homomorphism associated to $\Omega$ satisfying (\ref{ord0.2}). Let $k\in \Q_+$ be a positive contraction with spectrum $[0,1]$ such that $k$ has Lebesgue spectral measure with respect to $\tau_{\Q}$. Let $\pi_k:C_0((0,1]) \to \Q$ denote the $^*$-homomorphism defined by $\pi_k(h) := h(k)$.

Define $\alpha:(0,1] \times (0,1] \to (0,1]$ by $\alpha(s,t) := st$ and use this to define $\acute\Phi:C_0((0,1],A) \rightarrow \Q \otimes \Q_\omega$ by
\begin{equation}
\acute{\Phi}(x) := (\pi_k \otimes \pi_\Omega)(x \circ \alpha),\quad x\in C_0((0,1],A).\footnote{Alternatively $\acute{\Phi}$ can be defined as the $^*$-homomorphism $C_0((0,1],A) \to \Q\otimes\Q_\omega$ associated by (\ref{ord0.2}) to the c.p.c.\ order zero map $A\rightarrow \Q\otimes\Q_\omega$ given by $a\mapsto k\otimes\Omega(a)$.}
\end{equation}
An isomorphism $\Q\otimes\Q\cong\Q$ can be used to induce a unital embedding $\Q\otimes\Q_\omega\to\Q_\omega$. Hence we can regard $\acute{\Phi}$ as taking values in $\Q_\omega$. Define $\Theta:C([0,1])\rightarrow \Q_\omega$ to be the unitisation of $\acute{\Phi}|_{C_0((0,1],\mathbb C1_A)}$.  In this way $\acute{\Phi}$ is certainly compatible with $\Theta$.

For $a\in A$ and $n\in\mathbb N$, the definition of $\acute\Phi$ ensures that
\begin{align}
\notag
\acute{\Phi}(\id_{[0,1]}^n\otimes a)&\stackrel{\phantom{\eqref{ord0.2}}}{=}(\pi_k \otimes \pi_\Omega)(\id_{[0,1]}^n \otimes \id_{[0,1]}^n \otimes a) \\
&\stackrel{\eqref{ord0.2}}{=}k^n \otimes \left(\Omega(a)\Omega(1_A)^{n-1}\right).\label{e1.40}
\end{align}
Thus, 
\begin{eqnarray}
\tau_{\Q_\omega}(\acute{\Phi}(\id_{[0,1]}^n\otimes a))&\stackrel{\eqref{e1.40}}{=}&\tau_{\Q}(k^n)\tau_{\Q_\omega}(\Omega(a)\Omega(1_A)^{n-1})\nonumber\\
&\stackrel{\eqref{SWW-Trace.1},\eqref{e1.38}}{=}
& \tau_{\leb}(\id_{[0,1]}^n)\tau_A(a).
\end{eqnarray}
By linearity and density,
\begin{equation}\label{e1.44}
\tau_{\Q_\omega}\circ \acute{\Phi}=\tau_{\leb} \otimes \tau_A
\end{equation}
proving (\ref{MapsExist.3}).

Since the positive contraction $\Theta(\id_{[0,1]})=\acute{\Phi}(\id_{[0,1]}\otimes 1_A)$ has Lebesgue spectral measure with respect to $\tau_{\Q_\omega}$, so too does $1_{\Q_\omega}-\Theta(\id_{[0,1]})$. Therefore, by Lemma \ref{Lem:CE} (\ref{Qfacts.4}), there is a unitary $u\in \Q_\omega$ such that  
\begin{equation}\label{e1.45}
\Theta(1_{C([0,1])}-\id_{[0,1]})=1_{\Q_\omega}-\acute{\Phi}(\id_{[0,1]})=u\acute{\Phi}(\id_{[0,1]})u^*.
\end{equation}
Define $\grave{\Phi}$ to be the composition
\begin{equation}\label{e1.46}
\grave{\Phi}:C_0([0,1),A)\stackrel{\sigma}{\longrightarrow}C_0((0,1],A)\stackrel{\acute{\Phi}}{\longrightarrow}\Q_\omega\stackrel{\mathrm{Ad}(u)}{\longrightarrow}\Q_\omega,
\end{equation}
where $\sigma$ is the flip map given by $\sigma(x)(t):=x(1-t)$ for $x\in C_0([0,1),A)$, $t\in (0,1]$.
By construction, $\sigma(1_{C([0,1])}-\id_{[0,1]})=\id_{[0,1]}$, and hence
\begin{eqnarray}
\notag
\grave{\Phi}(1_{C([0,1])}-\id_{[0,1]})&=&u\acute{\Phi}(\id_{[0,1]})u^* \\
&\stackrel{\eqref{e1.45}}{=}&\Theta(1_{C([0,1])}-\id_{[0,1]}).
\end{eqnarray}
Since $\Theta$ is unital and the cone $C_0([0,1))$ is generated by $1_{C([0,1])}-\id_{[0,1]}$, it follows that
\begin{equation}
\grave{\Phi}|_{C_0([0,1),\mathbb C1_A)}=\Theta|_{C_0([0,1))},
\end{equation}
and so $\grave{\Phi}$ is compatible with $\Theta$, establishing (\ref{MapsExist.1}).

For (\ref{MapsExist.2}), we compute 
\begin{eqnarray}
\tau_{\Q_\omega}\circ\grave{\Phi}&\stackrel{\eqref{e1.46}}{=}&\tau_{\Q_\omega}\circ \mathrm{Ad}(u) \circ \acute{\Phi} \circ \sigma \nonumber\\
&=&\tau_{\Q_\omega} \circ \acute{\Phi} \circ \sigma\nonumber\\
&\stackrel{\eqref{e1.44}}{=}&(\tau_{\leb} \otimes \tau_A) \circ \sigma \nonumber\\
&=&\tau_{\leb}\otimes \tau_A,
\end{eqnarray}
as integration with respect to Lebesgue measure on $[0,1]$ is certainly invariant under flipping the interval.
\end{proof}

\section{A controlled stable uniqueness theorem}\label{Sect:SU}

\noindent
This section contains a small modification of a stable uniqueness theorem from \cite{DE:PLMS} (which was in turn inspired by Lin's paper \cite{Lin:JOT}).
A similar modification was given by Lin in \cite[Theorem 5.9]{Lin:apprUCT}; we have chosen to give an argument (based on \cite{DE:PLMS}) to make it transparent how the UCT hypothesis is used in Theorem \ref{thm:MainThm}. Recall that a $^*$-homomorphism $\iota:A\rightarrow B$ is said to be \emph{totally full}\footnote{Unital totally full $^*$-homomorphisms are called unital full embeddings in \cite{DE:PLMS}.} if for every nonzero $a\in A$, $\iota(a)$ is full in $B$, i.e. $\overline{\text{span}\, B\iota(a)B}=B$.  With this, our starting point is the stable uniqueness theorem below, from \cite{DE:PLMS}. For a separable $\mathrm C^*$-algebra $A$ and a $\sigma$-unital $\mathrm C^*$-algebra $B$, the group $KK_{\mathrm{nuc}}(A,B)$ is Skandalis' modification of $KK(A,B)$ obtained from the Cuntz picture by working only with strictly nuclear maps and homotopies (\cite[Section 2]{S:KThy}) and, as such, when $A$ or $B$ is nuclear, the canonical homomorphism $KK_{\mathrm{nuc}}(A,B) \to K(A,B)$ is an isomorphism.  This also holds when $A$ is $KK$-equivalent to a nuclear $\mathrm{C}^*$-algebra (\cite[Propositions 3.2 and 3.3]{S:KThy}), and hence whenever $A$  satisfies the UCT.

\begin{thm}[{Dadarlat--Eilers; cf.\ \cite[Theorem 4.5]{DE:PLMS}}]\label{DE:SU}
Let $A,B$ be unital $\mathrm{C}^*$-algebras with $A$ separable. Let $\iota:A\rightarrow B$ be a totally full unital $^*$-homo\-morphism, and suppose that $\phi,\psi:A\rightarrow B$ are nuclear $^*$-homo\-morphisms inducing the same class in $KK_{\mathrm{nuc}}(A,B)$ and such that $\phi(1_A)$ is unitarily equivalent to $\psi(1_A)$. Then, for any finite subset $\mathcal G\subset A$ and $\delta>0$, there exist $n\in\mathbb N$ and a unitary $u\in M_{n+1}(B)$ such that
\begin{equation}
\|u(\phi(a)\oplus\iota^{\oplus n}(a))u^*-(\psi(a)\oplus\iota^{\oplus n}(a))\|<\delta,\quad a\in \mathcal G.
\end{equation}
\end{thm}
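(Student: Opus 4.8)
The plan is to obtain Theorem~\ref{DE:SU} as (essentially) a restatement of the Dadarlat--Eilers stable uniqueness theorem \cite[Theorem~4.5]{DE:PLMS}; the only genuine additional work is a routine compression argument passing from maps into a stabilisation to maps into the finite amplification $M_{n+1}(B)$. For orientation I outline the underlying argument. The first ingredient is an \emph{absorbing representation}: since $A$ is separable and $\iota\colon A\to B$ is a unital, totally full $^*$-homomorphism, the countable diagonal repeat $\iota^{(\infty)}\colon A\to\mathcal{M}(B\otimes\mathcal{K})$ is absorbing in the sense needed for the Cuntz picture of $KK_{\mathrm{nuc}}$ --- for every nuclear $^*$-homomorphism $\rho\colon A\to B\otimes\mathcal{K}$ the maps $\rho\oplus\iota^{(\infty)}$ and $\iota^{(\infty)}$ are approximately unitarily equivalent by unitaries in the unitisation of $B\otimes\mathcal{K}$. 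This is the Elliott--Kucerovsky/Kasparov-stabilisation machinery as packaged in \cite{DE:PLMS}, and total fullness of $\iota$ is precisely the hypothesis that makes the absorption go through.

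Next I would use the $KK$-hypothesis. Since $\phi(a)-\psi(a)\in B\subseteq B\otimes\mathcal{K}$ for every $a\in A$, the pair $(\phi\oplus\iota^{(\infty)},\,\psi\oplus\iota^{(\infty)})$ is a Cuntz pair of nuclear $^*$-homomorphisms $A\to\mathcal{M}(B\otimes\mathcal{K})$, and its class in $KK_{\mathrm{nuc}}(A,B)$ is $[\phi]-[\psi]=0$. By the Cuntz picture this pair is homotopic to a degenerate one; feeding that homotopy through the absorption property above (an Eilenberg-swindle type shuffling argument) yields that $\phi\oplus\iota^{(\infty)}$ and $\psi\oplus\iota^{(\infty)}$ are approximately unitarily equivalent by unitaries in the unitisation of $B\otimes\mathcal{K}$. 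Here the hypothesis that $\phi(1_A)$ is unitarily equivalent to $\psi(1_A)$ (note $\phi,\psi$ need not be unital) is exactly what is needed to match the ``unit'' projections $\phi(1_A)\oplus 1_B^{\oplus\infty}$ and $\psi(1_A)\oplus 1_B^{\oplus\infty}$, so that the implementing unitary can be taken to fix the complementary corner.

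For the finite reduction: given a finite $\mathcal{G}\subseteq A$ and $\delta>0$, choose a unitary $w$ in the unitisation of $B\otimes\mathcal{K}$ with $\|w(\phi\oplus\iota^{(\infty)})(a)w^*-(\psi\oplus\iota^{(\infty)})(a)\|<\delta/2$ for all $a\in\mathcal{G}$. As $w-1\in B\otimes\mathcal{K}$, for $n$ large it is approximated, to within a tolerance sufficient to perturb $w$ to a unitary, by an element supported on the corner $e_n:=1_B^{\oplus(n+1)}\oplus 0\in\mathcal{M}(B\otimes\mathcal{K})$; thus $w$ may be perturbed to a unitary $u\in e_n\mathcal{M}(B\otimes\mathcal{K})e_n\cong M_{n+1}(B)$. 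Since the compressions of $\phi\oplus\iota^{(\infty)}$ and $\psi\oplus\iota^{(\infty)}$ to $e_n$ are, after reindexing, literally $\phi\oplus\iota^{\oplus n}$ and $\psi\oplus\iota^{\oplus n}$, this $u$ witnesses $\|u(\phi(a)\oplus\iota^{\oplus n}(a))u^*-(\psi(a)\oplus\iota^{\oplus n}(a))\|<\delta$ for all $a\in\mathcal{G}$.

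\textbf{Main obstacle.} The heart of the matter lies in the first two steps --- that a unital, totally full $\iota$ produces an absorbing representation, and that equality in $KK_{\mathrm{nuc}}$ then forces approximate unitary equivalence of the stabilised maps. This is the content of \cite{DE:PLMS}, and it is also the point that must be revisited in the sequel, where ``controlled fullness'' will replace plain total fullness in order to make $n$ depend only on $\mathcal{G}$, $\delta$, and the relevant tracial data rather than on $\phi,\psi,\iota$ themselves. By comparison, the finite reduction above and the bookkeeping for the non-unital units $\phi(1_A),\psi(1_A)$ are routine.
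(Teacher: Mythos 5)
Your proposal is correct and takes essentially the same approach as the paper: both reduce Theorem~\ref{DE:SU} to \cite[Theorem~4.5]{DE:PLMS} via the absorbing representation $\gamma=\iota^{\oplus\infty}\colon A\to M(\mathcal K\otimes B)$, with total fullness of $\iota$ furnishing nuclear absorption through \cite[Theorem~2.22]{DE:PLMS}. The only difference is presentational: the paper observes that $\gamma$ is a quasidiagonal representation (in the Hilbert-module sense of \cite[Definition~2.5]{DE:PLMS}), with quasidiagonalisation $\gamma_n=\iota^{\oplus n}$ witnessed by the exactly commuting projections $e_n\otimes 1_B$, so that \cite[Theorem~4.5]{DE:PLMS} already delivers the unitary in $M_{n+1}(B)$, whereas you redo the finite corner-compression by hand.
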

\begin{proof}
This is a special case of \cite[Theorem 4.5]{DE:PLMS}.  The totally full map $\iota$ induces a representation $\gamma:A\rightarrow M(\mathcal K\otimes B)$ by $\gamma(a)=\iota^{\oplus \infty}(a)$, for $a\in A$, which is nuclearly absorbing in the sense of \cite[Definition 2.4]{DE:PLMS} by \cite[Theorem 2.22]{DE:PLMS}. The representation $\gamma$ is quasidiagonal\footnote{Here the term quasidiagonal representation is used in the setting of representations on Hilbert modules; it differs from the use of the term quasidiagonality elsewhere in this paper. The quasidiagonality of $\gamma$ is witnessed by the projections $e_n\otimes 1_B \in\mathcal K\otimes B$ which commute exactly with $\gamma(A)$, where $(e_n)$ is an increasing sequence of projections with $e_n$ of rank $n$, converging strictly to the identity operator. Thus, identifying $e_n(M(\mathcal K\otimes B))e_n$ with $M_n(B)$, one has $\gamma_n(a)=e_n(\gamma(a))e_n=\iota^{\oplus n}(a)$ for $a\in A$.} in the sense of \cite[Definition 2.5]{DE:PLMS} with a quasidiagonalisation of the form $\gamma_n=\iota^{\oplus n}:A\rightarrow M_n(B)$, and so we are in a situation covered by \cite[Theorem 4.5]{DE:PLMS}, yielding the result.
\end{proof}

In the above stable uniqueness theorem, the $n$ depends not only on $\mathcal G$ and $\delta$, but also on the maps $\iota,\phi$ and $\psi$. From this, Dadarlat and Eilers obtain a stable uniqueness theorem for simple domains $A$ (\cite[Theorem 4.12]{DE:PLMS}), in which $n$ depends only on $\mathcal G$ and $\delta$ and not on $\iota,\phi$ and $\psi$ by using a sequence of counterexamples and Theorem~\ref{DE:SU}. This produces two sequences of $^*$-homomorphisms which agree in $\prod_n KK_{\mathrm{nuc}}(A,B_n)$. 
However, a sequence of homotopies witnessing this agreement does not necessarily give rise to a single continuous homotopy of sequences: the parameter speeds might increase too quickly.  The UCT resolves this difficulty as it ensures that the map 
\begin{equation}\label{InjectiveEquation}
KK_{\mathrm{nuc}}\Big(A,\prod_n B_n\Big)\rightarrow  \prod_nKK_{\mathrm{nuc}}(A,B_n)
\end{equation}
is injective,\footnote{One must also have some mild control over the structure of the target algebras.} and the sequences have the same class in $KK_{\mathrm{nuc}}(A,\prod_n B_n)$. Now Dadarlat and Eilers apply their \cite[Theorem 4.5]{DE:PLMS} to reach a contradiction. We note in passing that in our application of stable uniqueness to prove Theorem \ref{thm:MainThm}, the two sequences in $\prod_n KK_{\mathrm{nuc}}(A,B_n)$ are not only trivial in $KK$, but come from genuinely zero homotopic maps; a priori this is still not enough to show that they agree in $KK_{\mathrm{nuc}}(A,\prod_n B_n)$ without the UCT.


For completeness, we give a full account of Dadarlat and Eilers' argument below, which is the basis for our generalisation.   We first state the required structural conditions on the target algebras:

\begin{defn}[{\cite[Definition 4.9]{DE:PLMS}}]
A $\mathrm C^*$-algebra $B$ is an \emph{admissible target algebra of finite type}\footnote{\cite{DE:PLMS} also considers admissible target algebras of infinite type; we do not need these here.}  if $B$ is unital, $B$ has real rank zero, and the following conditions hold:
\begin{enumerate}[(i)]
\item for $k \in \mathbb N$ and projections $p,q \in B \otimes M_k$, if $[p]=[q]$ in $K_0(B)$ then $p\oplus 1_B$ is Murray--von Neumann equivalent to $q \oplus 1_B$; \label{ata.1}
\item the canonical map from unitaries in $B$ to $K_1(B)$ is surjective;\label{ata.2}
\item for any $r \in K_0(B)$, if $nr\geq 0$ for some $n\in\mathbb N$, then $[1_B]+r\geq 0$;\label{ata.3}
\item for any $r \in K_0(B)$ and any $n\in\mathbb N$, there is $s\in K_0(B)$ such that $-[1_B]\leq s \leq [1_B]$ and $r-s \in nK_0(B)$. \label{ata.4}
\end{enumerate}
\end{defn}
In the proof of Theorem \ref{thm:MainThm} we only require that $\Q_\omega$ is an admissible target algebra of finite type. To see this, first note that basic properties of $\Q$ show that $\Q$ is an admissible target algebra of finite type.\footnote{Indeed in (\ref{ata.1}) $[p]=[q]$ in $K_0(\Q)$ implies $p$ is Murray--von Neumann equivalent to $q$; in (\ref{ata.2}) $K_1(\Q)=0$, so surjectivity is automatic; in (\ref{ata.3}) $K_0(\Q)$ is unperforated --- if $nr\geq 0$ in $K_0(\Q)$ for some $n\in\mathbb N$, then $r\geq 0$; finally in (\ref{ata.4}), for each $n$, we have $K_0(\Q)=nK_0(\Q)$, so one can always take $s=0$.} Given a sequence $(B_n)$ of admissible target algebras of finite type, the product $\prod_n B_n$ and sequence algebra $\prod_n B_n/\sum_n B_n$ are both admissible target algebras of finite type by \cite[Theorem 4.10\ (iv)]{DE:PLMS}. This is readily adapted to ultraproducts in place of $\prod_n B_n/\sum_n B_n$, so that  $\Q_\omega$ is an admissible target algebra of finite type.

The second ingredient in \cite[Theorem 4.12]{DE:PLMS} is the \emph{total $K$-theory} of a $\mathrm{C}^*$-algebra $A$, $\underline{K}(A)$, defined as the direct sum of the $K$-theory group, $K_*(A)$, and all $K$-theory groups with coefficients, $K_*(A;\mathbb Z/n\mathbb Z)$, $n \in \mathbb N$, from \cite{Sch:PJM} (see Theorem 6.4 there in particular); in the literature, $\underline{K}(A)$ is sometimes just called the $K$-theory of $A$ with coefficients. As noted in \cite[Section 4]{DP:JFA}, there exists a $\mathrm{C}^*$-algebra $C$ such that $\underline{K}(A)\cong K_0(A\otimes C)$ for every $\mathrm{C}^*$-algebra $A$. In this way $\underline{K}(\cdot)$ is a functor from $\mathrm{C}^*$-algebras to abelian groups invariant under homotopy and stable isomorphism such that, for $^*$-homomorphisms $\phi,\psi:A\rightarrow B$, one has (identifying $\underline{K}(M_2(B))$ with $\underline{K}(B)$)
\begin{equation}\label{prop:totalKfacts;extract} (\phi\oplus \psi)_*=\phi_* + \psi_*:\underline{K}(A) \to \underline{K}(B). \end{equation}
When considering maps between total $K$-theory groups, one considers only those morphisms respecting certain additional structure (see \cite[Section 3.1]{DE:PLMS} or \cite{DL:DMJ}); the set of these morphisms is denoted by $\mathrm{Hom}_{\Lambda}(\underline{K}(A),\underline{K}(A))$. 

The next proposition is the only place where the UCT is used in \cite[Theorem 4.12]{DE:PLMS}, and hence the only use of the UCT in our Theorem \ref{thm:MainThm}. Related to the injectivity of (\ref{InjectiveEquation}), its proof relies on injectivity, in this case of the natural map 
\begin{equation}\label{InjectiveEquation2}
KK(A,B_\infty)\rightarrow\mathrm{Hom}_\Lambda(\underline{K}(A),\underline{K}(B_{\infty})).
\end{equation}

\begin{prop}[{Dadarlat--Eilers \cite{DE:PLMS}}]\label{prop:totalKfacts}
Let $A$ be a separable $\mathrm C^*$-algebra that satisfies the UCT and, for each $n\in\mathbb N$, let $B_n$ be an admissible target algebra of finite type.  Set
\begin{equation} B_\infty := \prod_{n=1}^\infty B_n / \{(b_n)_{n=1}^\infty \mid \lim_{n \to \infty} \|b_n\| = 0\}. 
\end{equation}
If $\phi_n,\psi_n:A \to B_n$ are $^*$-homomorphisms satisfying $(\phi_n)_*=(\psi_n)_*:\underline{K}(A) \to \underline{K}(B_n)$ for each $n$, then the $^*$-homomorphisms $A \to B_\infty$ induced by the sequences $(\phi_n)_n$ and $(\psi_n)_n$ have the same class in $KK_{\mathrm{nuc}}(A,B_\infty)$.
\end{prop}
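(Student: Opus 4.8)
The plan is to reduce the statement to the injectivity of the natural map $KK(A,B_\infty)\to\mathrm{Hom}_\Lambda(\underline K(A),\underline K(B_\infty))$ of \eqref{InjectiveEquation2}, which holds because $A$ satisfies the UCT and $B_\infty$ is an admissible target algebra of finite type. Since $A$ satisfies the UCT it is $KK$-equivalent to a nuclear $\mathrm C^*$-algebra, so $KK_{\mathrm{nuc}}(A,B_\infty)\cong KK(A,B_\infty)$, and it suffices to show that the two induced maps $\Phi,\Psi:A\to B_\infty$ agree in $KK(A,B_\infty)$. By the injectivity just quoted, this in turn reduces to checking that $\Phi_*=\Psi_*$ as elements of $\mathrm{Hom}_\Lambda(\underline K(A),\underline K(B_\infty))$.

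First I would record the structural input: $B_\infty$, being the quotient of $\prod_n B_n$ by the ideal of null sequences, is an admissible target algebra of finite type by the adaptation of \cite[Theorem 4.10(iv)]{DE:PLMS} already cited in the excerpt, and the UCT for $A$ gives both the identification $KK_{\mathrm{nuc}}(A,B_\infty)\cong KK(A,B_\infty)$ (via \cite[Propositions 3.2 and 3.3]{S:KThy}) and the injectivity of \eqref{InjectiveEquation2}. This last injectivity is exactly the place where the UCT is used, as the statement of the proposition advertises.

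Next I would compute $\underline K(B_\infty)$. Using the picture $\underline K(\cdot)\cong K_0(\,\cdot\otimes C)$ for a fixed $\mathrm C^*$-algebra $C$ (from \cite[Section 4]{DP:JFA}), one has $\underline K(B_\infty)\cong K_0(B_\infty\otimes C)$, and since $C$ is nuclear (one may take it so), $B_\infty\otimes C\cong \bigl(\prod_n B_n\otimes C\bigr)/\bigl(\bigoplus_n B_n\otimes C\bigr)$. The continuity of $K_0$ for such sequence algebras (each $B_n\otimes C$ being a separable $\mathrm C^*$-algebra) identifies $K_0(B_\infty\otimes C)$ with the sequence-algebra group $\prod_n K_0(B_n\otimes C)/\bigoplus_n K_0(B_n\otimes C)$, i.e.\ $\underline K(B_\infty)\cong \prod_n\underline K(B_n)/\bigoplus_n\underline K(B_n)$, naturally in a way compatible with the $\Lambda$-structure. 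Here one must take the usual care that $K_0$ commutes with the relevant (countable) products and direct limits; this is standard but is the one spot where a short argument is genuinely needed.

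Finally, $\Phi_*$ and $\Psi_*$ on $\underline K(A)$ are, under this identification, the classes of the sequences $((\phi_n)_*)_n$ and $((\psi_n)_*)_n$ in $\prod_n\mathrm{Hom}(\underline K(A),\underline K(B_n))$ modulo $\bigoplus_n$; but by hypothesis $(\phi_n)_*=(\psi_n)_*$ for every $n$, so these sequences are literally equal, hence $\Phi_*=\Psi_*$ in $\mathrm{Hom}_\Lambda(\underline K(A),\underline K(B_\infty))$. Injectivity of \eqref{InjectiveEquation2} then gives $[\Phi]=[\Psi]$ in $KK(A,B_\infty)=KK_{\mathrm{nuc}}(A,B_\infty)$, as required. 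The main obstacle is not the $KK$-theoretic reduction, which is routine given the UCT, but the bookkeeping in the previous step: verifying that $\underline K$ of the quotient sequence algebra $B_\infty$ is computed by the corresponding sequence-algebra construction applied to the groups $\underline K(B_n)$, compatibly with all Bockstein operations, so that the hypothesis $(\phi_n)_*=(\psi_n)_*$ can be fed in termwise. Everything else is formal.
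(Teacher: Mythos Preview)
Your overall strategy matches the paper's exactly: show $\Phi_*=\Psi_*$ in $\mathrm{Hom}_\Lambda(\underline K(A),\underline K(B_\infty))$, invoke injectivity of \eqref{InjectiveEquation2} via the UCT and the admissible target property, and then identify $KK$ with $KK_{\mathrm{nuc}}$ through Skandalis. The paper's proof is essentially the same three-line argument, citing \cite[Theorem~4.10]{DE:PLMS} for each step.

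The one place your sketch is shaky is the computation of $\underline K(B_\infty)$. You write that ``continuity of $K_0$'' identifies $K_0(B_\infty\otimes C)$ with $\prod_n K_0(B_n\otimes C)/\bigoplus_n K_0(B_n\otimes C)$ and call this ``standard''. It is not: $K_0$ does \emph{not} commute with countable products in general (consider $\prod_n\mathcal K$), and ``continuity'' refers to inductive limits, not products or sequence algebras. What makes the relevant statement true here is precisely the admissible target algebra hypothesis (real rank zero, $K_1$-surjectivity from unitaries, the order conditions), and this is the content of \cite[Theorem~4.10(ii)]{DE:PLMS}. Moreover, your route through $B_n\otimes C$ is awkward: even if the $B_n$ are admissible targets, the $B_n\otimes C$ need not be (the algebra $C$ is not unital), so you cannot simply apply the same machinery after tensoring. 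The paper avoids this by working directly with the product: \cite[Theorem~4.10(ii)]{DE:PLMS} gives that the sequences $(\phi_n)$ and $(\psi_n)$ induce the same map $\underline K(A)\to\underline K\bigl(\prod_n B_n\bigr)$, and one then composes with the quotient map to $B_\infty$. This sidesteps any explicit computation of $\underline K(B_\infty)$. So your instinct that ``a short argument is genuinely needed'' is right, but the argument you outline is not the correct one; cite \cite[Theorem~4.10(ii)]{DE:PLMS} instead.
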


\begin{proof}
Denote the two $^*$-homomorphisms in question by $\Phi$ and $\Psi$ respectively.
By \cite[Theorem 4.10 (ii)]{DE:PLMS}, both $(\phi_n)_n$ and $(\psi_n)_n$ induce the same map $\underline{K}(A) \to \underline{K}(\prod_n B_n)$, whence $\Phi_*=\Psi_*$ in $\mathrm{Hom}_{\Lambda}(\underline{K}(A),\underline{K}(B_\infty))$. As $A$ satisfies the UCT and each $B_n$ is an admissible target algebra of finite type, \cite[Theorem 4.10 (iii)]{DE:PLMS} shows that the natural map (\ref{InjectiveEquation2}) is an isomorphism, so that $[\Phi]=[\Psi]$ in $KK(A, B_\infty)$. Since $A$ is separable and satisfies the UCT, $KK(A, B_\infty)=KK_{\mathrm{nuc}}(A,B_\infty)$ by \cite{S:KThy} (see the discussion at the beginning of the section), so $[\Phi]=[\Psi]$ in $KK_{\mathrm{nuc}}(A, B_\infty)$.\footnote{Note that, when $A$ is nuclear, as it is in our main theorem, $KK(A,B_\infty)$ and $KK_{\mathrm{nuc}}(A,B_\infty)$ automatically agree, and so here the UCT is only used to allow for more general $A$.}
\end{proof}

With the above ingredients in place, the other detail needed to prove \cite[Theorem 4.12]{DE:PLMS} is a device for ensuring that given a sequence of totally full maps $\iota_n:A\rightarrow B_n$ into admissible target algebras of finite type, the induced map $I:A\rightarrow B_\infty$ is totally full.  In general this is false, so in \cite[Theorem 4.12]{DE:PLMS} Dadarlat and Eilers consider simple unital domain algebras $A$, as in this case, every unital $^*$-homomorphism is totally full.
When we apply the stable uniqueness theorem in the proof of Theorem \ref{thm:MainThm}, the domain will be the nonsimple $\mathrm{C}^*$-algebra $C_0((0,1),A)^\sim$ for some separable unital and nuclear $\mathrm C^*$-algebra $A$ satisfying the UCT.  To ensure that the resulting $I$ is totally full we use the following notion of controlled fullness.

\begin{defn}\label{def:Control}
Let $B,C$ be unital $\mathrm{C}^*$-algebras.  A \emph{control function} on $C$ is a function $\Delta:C_{+}^1\setminus\{0\}\rightarrow\mathbb N$.  Say that a unital $^*$-homomorphism $\iota:C\rightarrow B$ is \emph{$\Delta$-full} if for every nonzero $x\in C^1_+$ there exist contractions $b_1,\dots,b_{\Delta(x)}\in B$ such that 
\begin{equation}
1_B=\sum_{i=1}^{\Delta(x)}b_i^*\iota(x)b_i.
\end{equation}
\end{defn}
\noindent Note that a $^*$-homomorphism $\iota:C\rightarrow B$ is totally full if and only if there exists a control function $\Delta:C_+^1\setminus\{0\}\rightarrow\mathbb N$ such that $\iota$ is $\Delta$-full. 

Using these control functions we obtain the small generalisation\footnote{If $C$ is simple and unital, then there exists a control function $\Delta_C$ such that every unital $^*$-homomorphism $C\to B$ is $\Delta_C$-full; in this way Theorem \ref{thm:StableUniqueness} does generalise \cite[Theorem 4.12]{DE:PLMS}.} of \cite[Theorem 4.12]{DE:PLMS}, with constants independent of the exact form of the maps $\iota,\phi$ and $\psi$, and depending only on how full the map $\iota$ is in terms of these control functions (and on the finite set and tolerance).  The proof is essentially the same as that in \cite{DE:PLMS}.
A similar adaptation of the stable uniqueness argument to nonsimple (but nuclear) domains appears in \cite[Lemma 5.9]{Lin:apprUCT}.

\begin{thm}
\label{thm:StableUniqueness}
Let $C$ be a separable, unital, exact $\mathrm{C}^{*}$-algebra that satisfies the UCT.
Let $\Delta:C^1_+ \setminus \{0\} \to \mathbb N$ be a control function, let $\mathcal G \subset C$ be a finite subset, and let $\delta > 0$.
Then there exists $n \in \mathbb N$ such that, for any admissible target algebra $D$ of finite type, any unital $\Delta$-full $^{*}$-homomorphism $\iota:C \to D$, and any nuclear $^{*}$-homomorphisms $\phi,\psi:C \to D$, if
\begin{enumerate}[(i)]
\item $\phi_*=\psi_*:\underline{K}(C) \to \underline{K}(D)$, and
\item $\phi(1_C)$ is unitarily equivalent to $\psi(1_C)$,
\end{enumerate}
then there exists a unitary $u \in M_{n+1}(D)$ such that
\begin{equation}
\|u(\phi(c) \oplus \iota^{\oplus n}(c))u^* -(\psi(c) \oplus \iota^{\oplus n}(c))\|<\delta, \quad c \in \mathcal G. \end{equation}
\end{thm}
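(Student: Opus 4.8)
The plan is to run a ``sequence of counterexamples'' argument, exactly as in \cite[Theorem 4.12]{DE:PLMS}, with the control function $\Delta$ taking over the role played by simplicity. Suppose for contradiction that the conclusion fails for some fixed $\Delta$, $\mathcal G$ and $\delta$. Then for every $n\in\mathbb N$ there is an admissible target algebra $D_n$ of finite type, a unital $\Delta$-full $^*$-homomorphism $\iota_n\colon C\to D_n$, and nuclear $^*$-homomorphisms $\phi_n,\psi_n\colon C\to D_n$ with $(\phi_n)_*=(\psi_n)_*\colon\underline K(C)\to\underline K(D_n)$ and $\phi_n(1_C)$ unitarily equivalent to $\psi_n(1_C)$, but for which no unitary $u\in M_{n+1}(D_n)$ achieves $\|u(\phi_n(c)\oplus\iota_n^{\oplus n}(c))u^*-(\psi_n(c)\oplus\iota_n^{\oplus n}(c))\|<\delta$ for all $c\in\mathcal G$.

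Next I would assemble these into maps into the norm-quotient $D_\infty:=\prod_n D_n/\{(b_n):\|b_n\|\to 0\}$. The sequences $(\iota_n)_n$, $(\phi_n)_n$, $(\psi_n)_n$ induce unital $^*$-homomorphisms $I,\Phi,\Psi\colon C\to D_\infty$, with $\Phi,\Psi$ nuclear since each $\phi_n,\psi_n$ is (nuclearity of $C$, or exactness plus the structure of $D_\infty$, passes this along; in fact exactness of $C$ suffices as in \cite{DE:PLMS}). The key point is that $I$ is \emph{totally full}: given a nonzero $x\in C^1_+$, the $\Delta$-fullness of each $\iota_n$ yields contractions $b_{1,n},\dots,b_{\Delta(x),n}\in D_n$ with $1_{D_n}=\sum_{i=1}^{\Delta(x)}b_{i,n}^*\iota_n(x)b_{i,n}$; since the number of terms $\Delta(x)$ is the \emph{same} for every $n$, these assemble to contractions $b_1,\dots,b_{\Delta(x)}\in D_\infty$ with $1_{D_\infty}=\sum_i b_i^*I(x)b_i$, so $I(x)$ is full. (This uniformity is precisely what fails for an arbitrary sequence of totally full maps, and is the reason for introducing control functions.) Also $\Phi(1_C)=1_{D_\infty}=\Psi(1_C)$. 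By Proposition~\ref{prop:totalKfacts} (here invoking the UCT for $C$, together with $D_\infty$ being an admissible target algebra of finite type, which follows from \cite[Theorem 4.10\,(iv)]{DE:PLMS}), the equalities $(\phi_n)_*=(\psi_n)_*$ on $\underline K$ force $[\Phi]=[\Psi]$ in $KK_{\mathrm{nuc}}(C,D_\infty)$.

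Then I would apply the absorbing stable uniqueness theorem, Theorem~\ref{DE:SU}, to the totally full unital map $I$ and the nuclear maps $\Phi,\Psi\colon C\to D_\infty$ which agree in $KK_{\mathrm{nuc}}(C,D_\infty)$ and satisfy $\Phi(1_C)=\Psi(1_C)$. This produces some $m\in\mathbb N$ and a unitary $w\in M_{m+1}(D_\infty)$ with $\|w(\Phi(c)\oplus I^{\oplus m}(c))w^*-(\Psi(c)\oplus I^{\oplus m}(c))\|<\delta/2$ for all $c\in\mathcal G$. Lifting $w$ to a bounded sequence of unitaries $(w_n)$ in $M_{m+1}(D_n)$ (possible since $D_n$ has real rank zero, hence $M_{m+1}(D_n)$ has stable rank one and unitaries lift along the quotient $\prod D_n\to D_\infty$) and reading off one coordinate, we get for all sufficiently large $n$ a unitary $w_n\in M_{m+1}(D_n)$ with $\|w_n(\phi_n(c)\oplus\iota_n^{\oplus m}(c))w_n^*-(\psi_n(c)\oplus\iota_n^{\oplus m}(c))\|<\delta$ for all $c\in\mathcal G$. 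Picking any such $n\geq m$ contradicts the defining property of the counterexample at stage $n$ (pad $w_n$ with an identity block of size $n-m$ in the $\iota_n$-part to land in $M_{n+1}(D_n)$). This contradiction proves the theorem.

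The step I expect to require the most care is not any single deduction but making sure the counterexample hypothesis is stated so that the contradiction actually bites: the negated conclusion must assert failure uniformly \emph{for that particular $n$ as the amplification parameter}, so that the $m$ produced at the end (which is $\leq n$ after the padding) genuinely violates it. Beyond that, the routine-but-necessary verifications are: that $D_\infty$ is an admissible target algebra of finite type and that $\Phi,\Psi$ are nuclear (both handled by citing \cite[Theorem 4.10]{DE:PLMS} and the hypotheses on $C$); that unitaries and the Murray--von Neumann relations lift through the quotient map onto $D_\infty$ using real rank zero; and the bookkeeping of matrix amplifications and the $\delta/2$ versus $\delta$ slack. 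The genuinely new ingredient over \cite[Theorem 4.12]{DE:PLMS}---that a \emph{fixed} control function $\Delta$ forces $I$ to be totally full with the same control function---is immediate once set up, which is exactly the payoff of Definition~\ref{def:Control}.
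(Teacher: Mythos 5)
Your proof follows the same route as the paper: negate the statement, extract a sequence of counterexamples, pass to the sequence algebra $D_\infty$, observe that the \emph{fixed} control function $\Delta$ forces $I$ to be totally full, invoke Proposition~\ref{prop:totalKfacts} and then the absorbing uniqueness theorem (Theorem~\ref{DE:SU}), and finally lift and pad to contradict the counterexamples. Two small slips, neither a genuine gap, are worth flagging. First, you assert $\Phi(1_C)=1_{D_\infty}=\Psi(1_C)$; but $\phi_n,\psi_n$ are not assumed unital (only $\iota_n$ is), so all one can and should say is that the unitaries in $D_n$ implementing $\phi_n(1_C)\sim\psi_n(1_C)$ assemble to a unitary in $D_\infty$ giving $\Phi(1_C)\sim\Psi(1_C)$, which is exactly the hypothesis Theorem~\ref{DE:SU} requires. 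Second, real rank zero does \emph{not} imply stable rank one, so your stated reason for lifting $w$ to a sequence of unitaries is not right; however, the conclusion is the standard fact that unitaries in a norm quotient of a product of unital $\mathrm C^*$-algebras lift to sequences of unitaries (lift to any bounded sequence, note its modulus is eventually invertible, and polar-decompose coordinatewise; the correction tends to zero in norm).
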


\begin{proof}
Suppose that the statement is false, so it fails for some control function $\Delta$, finite set $\mathcal G$ and $\delta>0$.
Then for every $n \in \mathbb N$, there exists an admissible target algebra of finite type $D_n$ and $^*$-homomorphisms $\iota_n,\phi_n,\psi_n$ such that $\iota_n$ is unital and $\Delta$-full, $\phi_n,\psi_n$ are nuclear, $\phi_n(1_C)$ is unitarily equivalent to $\psi_n(1_C)$, $(\phi_n)_* = (\psi_n)_*$, yet
\begin{equation}
\label{eq:StableUniquenesAssume}
\max_{c \in \mathcal G} \|v(\phi_n(c) \oplus \iota_n^{\oplus n}(c))v^* - (\psi_n(c) \oplus \iota_n^{\oplus n}(c))\| \geq \delta \end{equation}
for every unitary $v \in M_{n+1}(D_n)$.

Define
\begin{equation} D_\infty := \prod_{n=1}^\infty D_n/\{(d_n)_{n=1}^\infty \mid \lim_{n\to\infty} \|d_n\| = 0\}, \end{equation}
and define $\Phi,\Psi,I:C\rightarrow D_\infty$ to be the $^*$-homomorphisms induced by the sequences $(\phi_n),(\psi_n),(\iota_n)$ respectively. Since each $\phi_n$ and $\psi_n$ is nuclear, and $C$ is exact, $\Phi,\Psi$ are nuclear by \cite[Proposition 3.3]{D:JFA97}. As $C$ satisfies the UCT, $[\Phi]=[\Psi]$ in $KK(C,D_\infty)$ by Proposition \ref{prop:totalKfacts}. Each $\phi_n(1_C)$ is unitarily equivalent to $\psi_n(1_C)$, so that $\Phi(1_C)$ is unitarily equivalent to $\Psi(1_C)$.

Since each $\iota_n$ is $\Delta$-full, it follows that $I$ is also $\Delta$-full.  Indeed, given a nonzero positive contraction $c\in C$, there are contractions $d_{1,n},\dots,d_{\Delta(c),n}\in D_n$ such that $1_{D_n}=\sum_{i=1}^{\Delta(c)} d_{i,n}^*\iota_n(c)d_{i,n}$.  Thus letting $d_i$ be the contraction in $D_\infty$ represented by $(d_{i,n})_n$, we have $1_{D_\infty}=\sum_{i=1}^{\Delta(c)}d_i^* I(c)d_i$.  Consequently $I(c)$ is full in $D_\infty$ for every nonzero positive contraction $c\in C$. The same holds for all nonzero $c\in C$, as $I(|c|)$ is in the ideal generated by $I(c)$. That is, $I$ is totally full.

Hence by Theorem \ref{DE:SU}, there exist $m \in \mathbb N$ and a unitary $u \in M_{m+1}(D_\infty)$ such that
\begin{equation} \|u(\Phi(c) \oplus I^{\oplus m}(c))u^* -( \Psi(c) \oplus I^{\oplus m}(c))\| < \delta, \quad c \in \mathcal G. \end{equation}
The unitary $u$ lifts to a sequence of unitaries $u_n \in M_{m+1}(D_n)$, and for $n$ sufficiently large, it follows that
\begin{equation}\label{e:3.10} \|u_n(\phi_n(c) \oplus \iota_n^{\oplus m}(c))u_n^* - (\psi_n(c) \oplus \iota_n^{\oplus m}(c))\| < \delta, \quad c \in \mathcal G. \end{equation}
Taking $n\geq m$ large enough so that \eqref{e:3.10} holds, the unitary $v:=u_n \oplus 1_{D_n}^{\oplus (n-m)}$ in $M_{n+1}(D_n)$ gives a contradiction to \eqref{eq:StableUniquenesAssume}, proving the theorem.
\end{proof}

\section{A patching lemma}\label{Sect.Patch}

\allowdisplaybreaks[2]

\noindent
In this section we provide the patching lemma, which connects two $^*$-homo\-mor\-phisms $\nu_0$ and $\nu_1$ defined on the suspension $C_0((0,1),A)$ of $A$ into a single map $\rho$, in the sense that $\rho$ recovers the two original maps on the left and right thirds of the interval respectively. The map $\rho$ will be c.p.c.\ and approximately multiplicative, with the estimates depending on how close the original maps are to being approximately unitarily equivalent over the middle third of the interval.

\begin{lemma}
\label{lem:Patching}
There exists a partition of unity $f_0,f_{\frac12},f_1$ for $C([0,1])$ with $f_{\frac12} \in C_0((\tfrac13,\tfrac23))$, such that the following holds.

Let $A$ and $E$ be unital $\mathrm{C}^*$-algebras and $\theta:C([0,1])\rightarrow E$ a unital $^*$-homomorphism. 
Given $^*$-homomorphisms $\nu_0,\nu_{1}:C_0((0,1),A)\rightarrow E$ compatible with $\theta$ and a unitary $v\in E$, there exists a completely positive map $\rho:C_0((0,1),A)\rightarrow M_2(E)=E\otimes M_2$ such that:
\begin{enumerate}[(i)]
\item $\rho|_{C_0((0,\frac13),A)} = 
\left(\begin{array}{cc} \nu_0|_{C_0((0,\frac13),A)} & 0 \\ 0& 0 \end{array}\right)$.\label{Patch.C1}
\item $\rho|_{C_0((\frac23,1),A)} = 
\left(\begin{array}{cc} \nu_1|_{C_0((\frac23,1),A)} & 0 \\ 0& 0 \end{array}\right)$.\label{Patch.C2}
\item If $\tau \in T(E)$ satisfies $\tau\circ\nu_0=\tau\circ\nu_1$, then $(\tau \otimes \mathrm{Tr}_2) \circ \rho = \tau \circ \nu_0$, where we recall that $\mathrm{Tr}_2$ is the canonical non-normalised tracial functional on $M_2$.\label{Patch.C3}
\item For $x\in C_0((0,1),A)$ and $h\in C([0,1])$, \label{Patch.C5}
\begin{equation}
\|\rho(hx)-\theta^{\oplus 2}(h)\rho(x)\| \leq \Big(\|[\theta(f_{\frac12}),v]\| \|h\|+\|[\theta(hf_{\frac12}),v]\|\Big)\|x\|.
\end{equation}
\item\label{Patch.C4} For $x,y\in C_0((0,1),A)$, 
\begin{align}
\notag
\lefteqn{\|\rho(xy)-\rho(x)\rho(y)\| }  \\
& \leq\Big(7\|[v,\theta(f_{\frac12})]\|+2\|[v,\theta(f_0f_{\frac12})]\|+5\|[v,\theta(f_{\frac12}f_1)]\|\Big)\|x\|\|y\| \nonumber\\
\notag
& \phantom{\leq} \quad +\Big(\|\nu_1(f_{\frac12}f_1x)v-v\nu_0(f_{\frac12}f_1x)\|\nonumber\\
&  \phantom{\leq} \qquad +\|\nu_1(f_{\frac12}x)v-v\nu_0(f_{\frac12}x)\|\Big)\|y\|.
\end{align}
\end{enumerate}

\end{lemma}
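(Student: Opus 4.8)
The plan is to build $\rho$ from $\nu_0$ and $\nu_1$ by using $v$ to ``rotate'' the middle-third contribution of $\nu_1$ over to $\nu_0$, realising the correction term in the second matrix coordinate so that everything stays completely positive. First I would fix a partition of unity $f_0,f_{\frac12},f_1$ with $f_{\frac12}\in C_0((\tfrac13,\tfrac23))$ and, say, $f_0\equiv 1$ on $[0,\tfrac13]$, $f_1\equiv 1$ on $[\tfrac23,1]$; write $g$ for a function in $C_0((0,1))$ with $g\equiv 1$ on $\mathrm{supp}(f_{\frac12})$ (so $gf_{\frac12}=f_{\frac12}$), and use compatibility with $\theta$ to express all the pieces $\nu_j(h x)=\theta(h)\nu_j(x)$ freely. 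The natural candidate is to set, for $x\in C_0((0,1),A)$,
\begin{equation}
\rho(x):=\begin{pmatrix} \nu_0(f_0 x)+v\,\nu_0(f_{\frac12}x)\,v^* + \nu_1(f_1 x) & *\\ * & * \end{pmatrix},
\end{equation}
where the $(1,1)$-entry is the ``main'' map and the off-diagonal and $(2,2)$-entries are chosen as the unique completely positive completion. Concretely, since $x\mapsto \nu_0(f_0x)\oplus 0$, $x\mapsto (v\oplus 0)(\nu_0(f_{\frac12}x)\oplus 0)(v^*\oplus 0)$ rewritten using a partial isometry-style $2\times 2$ trick, and $x\mapsto \nu_1(f_1x)\oplus 0$ are each c.p.\ (indeed $^*$-homomorphisms followed by corners/unitary conjugation), the real content is how to combine the conjugated middle piece with the rest \emph{without} destroying complete positivity; the standard device is to realise $v\nu_0(f_{\frac12}x)v^*$ not by literal conjugation but by a c.p.\ map of the form $x\mapsto W^*(\text{something})W$ for a suitable isometry $W\in M_2(E)$ built from $v$, $\theta(f_{\frac12}^{1/2})$ and complementary functions, exactly as in the ``cutting and pasting'' constructions of stable uniqueness arguments.

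Once $\rho$ is pinned down, properties (\ref{Patch.C1}) and (\ref{Patch.C2}) are immediate: on $C_0((0,\tfrac13),A)$ we have $f_{\frac12}x=f_1x=0$ and $f_0x=x$, so $\rho(x)=\nu_0(x)\oplus 0$; symmetrically on $C_0((\tfrac23,1),A)$ only the $\nu_1(f_1x)=\nu_1(x)$ term survives, and one checks the off-diagonal/\,$(2,2)$ entries vanish there because the relevant cutdowns of $v$ act trivially. For (\ref{Patch.C3}), given $\tau\in T(E)$ with $\tau\circ\nu_0=\tau\circ\nu_1$, apply $\tau\otimes\mathrm{Tr}_2$: the conjugation by $v$ (or by the isometry $W$, after accounting for the defect projection in the second coordinate, which is where the $\mathrm{Tr}_2$ rather than $\tau_{M_2}$ enters) is trace-preserving, so $(\tau\otimes\mathrm{Tr}_2)\circ\rho(x)=\tau(\nu_0(f_0x))+\tau(\nu_0(f_{\frac12}x))+\tau(\nu_1(f_1x))=\tau(\nu_0((f_0+f_{\frac12}+f_1)x))=\tau(\nu_0(x))$, using the partition-of-unity identity $f_0+f_{\frac12}+f_1=1$ and compatibility. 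For (\ref{Patch.C5}), the point is that $\rho(hx)-\theta^{\oplus2}(h)\rho(x)$ only fails to vanish on the middle term: $\theta(h)$ commutes with $\nu_j(\cdot)$ by compatibility, so the only obstruction is $\theta(h)v\nu_0(f_{\frac12}x)v^*$ versus $v\nu_0(f_{\frac12}hx)v^*=v\theta(h)\nu_0(f_{\frac12}x)v^*$, whose difference is controlled by the two commutator terms $\|[\theta(f_{\frac12}),v]\|\|h\|$ and $\|[\theta(hf_{\frac12}),v]\|$ after inserting $g$ and using $gf_{\frac12}=f_{\frac12}$ --- this is a short estimate with the stated bound. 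The multiplicativity defect (\ref{Patch.C4}) is the main obstacle: $\rho(xy)-\rho(x)\rho(y)$ must be expanded using $f_jf_k$-products, and the cross terms $\nu_0(f_0 x)\cdot v\nu_0(f_{\frac12}y)v^*$ etc.\ have to be pushed to the ``correct'' form by commuting $v$ past $\theta(f_0f_{\frac12})$ and $\theta(f_{\frac12}f_1)$ and replacing $v\nu_0(f_{\frac12}f_1x)v^*$ by $\nu_1(f_{\frac12}f_1x)$ (up to the error $\|\nu_1(f_{\frac12}f_1x)v-v\nu_0(f_{\frac12}f_1x)\|$), so that the overlap between the $\nu_0$- and $\nu_1$-supported regions is reconciled; bookkeeping the resulting $7$, $2$, $5$ coefficients is the tedious part, but each term is a single commutator or intertwining defect times norms of $x,y$, so no new idea is needed beyond careful accounting. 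I expect step (\ref{Patch.C4}) --- and in particular getting \emph{exactly} those coefficients rather than merely \emph{some} constants --- to absorb most of the work; the construction of a manifestly c.p.\ $\rho$ with the right corners is the only other place where care is required, and I would handle it by writing $\rho$ as a compression of a genuine $^*$-homomorphism (or order-zero map) on a slightly larger algebra.
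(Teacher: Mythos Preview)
Your construction has a genuine gap at the level of the multiplicativity estimate, and it stems from the form of the middle piece. You propose that the $(1,1)$-entry of $\rho$ be $\nu_0(f_0x)+v\nu_0(f_{\frac12}x)v^*+\nu_1(f_1x)$, so that the middle contribution is $v\nu_0(f_{\frac12}\cdot)v^*$ \emph{uniformly} across the middle third. But the middle piece must agree with $\nu_0$ on the \emph{left} overlap (where it meets $\rho_0=\nu_0(f_0\cdot)\oplus 0$) and only with $v\nu_0(\cdot)v^*\approx\nu_1$ on the \emph{right} overlap (where it meets $\rho_1=\nu_1(f_1\cdot)\oplus 0$). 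With your map, the cross term for $(i,j)=(\tfrac12,0)$ contributes, after using compatibility, an error of the form $\|v\nu_0(f_0f_{\frac12}x)v^*-\nu_0(f_0f_{\frac12}x)\|$. This is a commutator of $v$ with $\nu_0$ itself, and nothing in the hypotheses bounds that: the only intertwining control you are given is $v\nu_0(\cdot)\approx\nu_1(\cdot)v$ on the middle third, together with commutators of $v$ with \emph{scalar} functions $\theta(f_{\frac12}),\theta(f_0f_{\frac12}),\theta(f_1f_{\frac12})$. Your proposed bookkeeping for part~(v) therefore cannot close, and no choice of ``completely positive completion'' in the other matrix entries repairs this, since the problem already lives in the $(1,1)$-corner. (There is also no ``unique'' c.p.\ completion of a prescribed $(1,1)$-entry, so that part of the proposal is underdetermined.)

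The missing idea is a continuous interpolation inside $M_2$. The paper chooses the partition of unity so that $\mathrm{supp}(f_0)\subset[0,\tfrac25]$ and $\mathrm{supp}(f_1)\subset[\tfrac35,1]$, fixes a unitary path $U\in C([0,1],M_2)$ with $U|_{[0,2/5]}\equiv 1_2$ and $U|_{[3/5,1]}\equiv\left(\begin{smallmatrix}0&1\\1&0\end{smallmatrix}\right)$, sets
\[
V:=(\theta\otimes\id_{M_2})(U^*)\begin{pmatrix}1_E&0\\0&v\end{pmatrix}(\theta\otimes\id_{M_2})(U)\in M_2(E),
\]
and defines $\rho:=\rho_0+\rho_{\frac12}+\rho_1$ with $\rho_i(x)=\nu_i(f_ix)\oplus 0$ for $i=0,1$ and $\rho_{\frac12}(x)=V\big(\nu_0(f_{\frac12}x)\oplus 0\big)V^*$. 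This is manifestly c.p., and the point is that $V$ acts as the identity on the $e_{11}$-corner over $\mathrm{supp}(f_0)$ and as conjugation by $v$ on the $e_{11}$-corner over $\mathrm{supp}(f_1)$; concretely $\left(\begin{smallmatrix}\theta(f_0)&0\\0&0\end{smallmatrix}\right)V=\left(\begin{smallmatrix}\theta(f_0)&0\\0&0\end{smallmatrix}\right)$, which is exactly what makes the $(\tfrac12,0)$ and $(0,\tfrac12)$ cross terms collapse with only scalar commutator errors. Once you have this $V$, the nine cross terms in $\rho(x)\rho(y)-\rho(xy)$ are each handled by a short computation, and the coefficients $7,2,5$ drop out of the accounting. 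Your isometry-$W$ remark is heading in the right direction, but the operative object is a unitary in $M_2(E)$ coming from a \emph{path of unitaries} over $[0,1]$, not a single isometry built from $\theta(f_{\frac12}^{1/2})$.
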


\begin{remark}
\label{rmk:Patching}
Note that in (\ref{Patch.C5}) and (\ref{Patch.C4}), the estimates on the right-hand sides are bounded in terms of how well $\mathrm{Ad}(v) \circ \nu_0|_{C_0((\frac13,\frac23),A)}$ approximates $\nu_1|_{C_0((\frac13,\frac23),A)}$.
Thus, for example, if
\begin{equation}
\mathrm{Ad}(v) \circ \nu_0|_{C_0((\frac13,\frac23),A)} = \nu_1|_{C_0((\frac13,\frac23),A)}
\end{equation}
then (\ref{Patch.C5}) and (\ref{Patch.C4}) tell us that $\rho$ is a $^*$-homomorphism that is compatible with $\theta$ giving an exact patching result.

Likewise, given finite sets $\mathcal F \subset C_0((0,1),A)$, $\mathcal F' \subset C([0,1])$, and a tolerance $\eta>0$, there are a finite set $\mathcal G\subset C_0((\tfrac13, \tfrac23),A)$ and $\delta>0$ such that, if 
\begin{equation}
\mathrm{Ad}(v) \circ \nu_0(x) \approx_\delta \nu_1(x), \quad x \in \mathcal G,
\end{equation}
then
\begin{align}
\notag
\rho(xy) &\approx_\eta \rho(x)\rho(y), \quad x,y\in \mathcal F, \text{ and} \\
\rho(hx) &\approx_\eta \theta^{\oplus 2}(h)\rho(x) \quad x\in\mathcal F,\ h\in\mathcal F'.
\end{align}
Namely, set
\begin{align}
\notag
\mathcal G&:= \{f_{\frac12},f_0f_{\frac12},f_1f_{\frac12}\} \cup \{hf_{\frac12} \mid h \in \mathcal F'\} \\
& \phantom{:=}  \quad \cup \{f_{\frac12}x \mid x \in \mathcal F\} \cup \{f_{\frac12}f_1x\mid x \in \mathcal F\}
\end{align}
and take $\delta$ such that
\begin{equation}
(M'+1)M\delta\leq \eta\text{ and }14M^2\delta+2M\delta \leq\eta,
\end{equation}
where $M:=\max\{\|x\| \mid x \in \mathcal F\}$ and $M':=\max\{\|h\| \mid h \in \mathcal F'\}$.
\end{remark}

\begin{proof}[Proof of Lemma \ref{lem:Patching}]
Define functions $f_0,f_1\in C([0,1])$ so that
\begin{align}
f_0|_{[0,\frac13]} &\equiv 1, & f_0|_{[\frac25,1]} \equiv 0, \label{Patch.E1}\\
f_1|_{[0,\frac35]} &\equiv 0, & f_1|_{[\frac23,1]} \equiv 1,\label{Patch.E2}
\end{align}
and $f_0,f_1$ are linear on the complements of the intervals in (\ref{Patch.E1}) and (\ref{Patch.E2}) respectively. Write 
\begin{equation}\label{Patch.E3}
f_{\frac12}:= 1_{C([0,1])}-f_0-f_1
\end{equation}so that $f_0,f_{\frac12},f_1$ comprise a partition of unity of $C([0,1])$ and $f_{\frac12} \in C_0((\frac13,\frac23))$. Let $U \in C([0,1],M_2) \cong C([0,1]) \otimes M_{2}$ be a unitary which satisfies
\begin{equation}\label{Patch.E5} U|_{[0,\frac25]} \equiv \left(\begin{array}{cc} 1&0\\0&1 \end{array}\right), \quad
U|_{[\frac35,1]} \equiv \left(\begin{array}{cc} 0&1\\1&0 \end{array}\right). \end{equation}
 These objects are summarised in Figure \ref{Fig1}.

\begin{figure}[h!]
\begin{picture}(280,130)
\setlength{\unitlength}{0.15mm}
\linethickness{0.15mm}
\put(0,255){\tiny{$1$}}
\put(10,110){\tiny{$0$}}
\put(0,130){\tiny{$0$}}
\put(590,110){\tiny{$1$}}
\put(20,140){\line(1,0){580}}
\put(20,140){\line(0,-1){5}}
\put(600,140){\line(0,-1){5}}
\put(213,140){\line(0,-1){5}}
\put(253,140){\line(0,-1){5}}
\put(408,140){\line(0,-1){5}}
\put(368,140){\line(0,-1){5}}
\put(205,105){\tiny{$\tfrac13$}}
\put(249,105){\tiny{$\tfrac25$}}
\put(360,105){\tiny{$\tfrac35$}}
\put(399,105){\tiny{$\tfrac23$}}
\put(20,260){\line(1,0){193}}
\put(213,260){\line(1,-3){40}}
\put(100,290){\tiny{$f_0$}}
\put(300,290){\tiny{$f_{\frac12}$}}
\put(213,140){\line(1,3){40}}
\put(368,140){\line(1,3){40}}
\put(253,260){\line(1,0){115}}
\put(368,260){\line(1,-3){40}}
\put(408,260){\line(1,0){192}}
\put(500,290){\tiny{$f_1$}}
\put(20,60){\vector(1,0){233}}
\put(80,25){\tiny{$U\equiv\begin{pmatrix}1&0\\0&1\end{pmatrix}$}}
\put(430,25){\tiny{$U\equiv\begin{pmatrix}0&1\\1&0\end{pmatrix}$}}
\put(253,60){\vector(-1,0){233}}
\put(368,60){\vector(1,0){233}}
\put(600,60){\vector(-1,0){233}}
\end{picture}
\caption{$f_0$, $f_{\frac12}$, $f_1$ and the unitary $U$}\label{Fig1}
\end{figure}
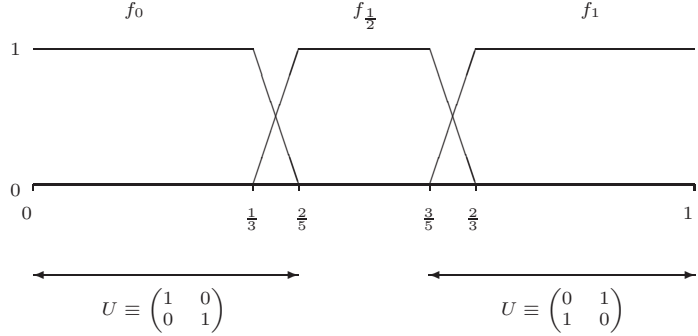

Given $E$, a unital $^*$-homomorphism $\theta:C([0,1])\rightarrow E$, compatible $^*$-homomorphisms $\nu_i:C_0((0,1),A)\rightarrow E$ and a unitary $v\in E$, define c.p.c.\ maps $\rho_0,\rho_{\frac12},\rho_1:C_0((0,1),A) \to M_2(E)$ by
\begin{align}
\rho_i(x) &:= \begin{pmatrix} \nu_i(f_ix) & 0 \\ 0&0 \end{pmatrix},\quad x \in C_0((0,1),A),\ i=0,1 \text{ and} \label{Patch.DefRho1}\\
\rho_{\frac12}(x) &:= V\begin{pmatrix} \nu_0(f_{\frac12}x) & 0 \\ 0&0 \end{pmatrix}V^*,\quad  x\in C_0((0,1),A),\label{Patch.DefRho2}
\end{align}
where 
\begin{equation}\label{Patch.DefV}
V:=\Big((\theta\otimes \id_{M_2})(U^*)\begin{pmatrix} 1_E & 0 \\ 0 & v \end{pmatrix}(\theta\otimes \id_{M_2})(U)\Big),
\end{equation}
a unitary in $M_2(E)$.  Then define 
\begin{equation}\label{Patch.E20}
\rho:=\rho_0+\rho_{\frac12}+\rho_1.
\end{equation} This is certainly a completely positive map, and the location of the supports of $f_0,f_{\frac12},f_1$ ensure that (\ref{Patch.C1}) and (\ref{Patch.C2}) hold. For (\ref{Patch.C3}), suppose that $\tau\in T(E)$ has $\tau\circ\nu_0=\tau\circ\nu_1$.  Then
\begin{align}
(\tau \otimes \mathrm{Tr}_{2})(\rho(x))&\stackrel{\hphantom{(\ref{Patch.DefRho1}),(\ref{Patch.DefRho2})}}{=}(\tau \otimes \mathrm{Tr}_{2})(\rho_0(x)+\rho_{\frac12}(x)+\rho_1(x))\nonumber
\\
&\stackrel{(\ref{Patch.DefRho1}),(\ref{Patch.DefRho2})}{=}\tau(\nu_0(f_0x)+\nu_0(f_{\frac12}x)+\nu_1(f_1x))\nonumber\\
&\stackrel{\hphantom{(\ref{Patch.DefRho1}),(\ref{Patch.DefRho2})}}{\stackrel{\tau\circ\nu_0=\tau\circ\nu_1}{=}}\tau\circ\nu_0(f_0x+f_{\frac12}x+f_1x)\nonumber\\
&\stackrel{\hphantom{(\ref{Patch.DefRho1}),(\ref{Patch.DefRho2})}}{\stackrel{(\ref{Patch.E3})}{=}}\tau\circ\nu_0(x),\quad x\in C_0((0,1),A),
\end{align}
establishing (\ref{Patch.C3}).  

For (\ref{Patch.C5}), fix $h\in C([0,1])$ and $x\in C_0((0,1),A)$; by rescaling, we may assume these to be contractions. Then compatibility gives
\begin{align}
\rho_{\frac12}(hx)&=V\begin{pmatrix}\theta(f_{\frac12}h)&0\\0&\theta(f_{\frac12}h)\end{pmatrix}\begin{pmatrix}\nu_0(x)&0\\0&0\end{pmatrix}V^*,
\end{align}
whence
\begin{align}
\notag
\lefteqn{\|\rho_{\frac12}(hx)-\theta^{\oplus 2}(h)\rho_{\frac12}(x)\|} \\
\notag &\leq  \|[V,\theta^{\oplus 2}(f_{\frac12}h)]\|+\|[V,\theta^{\oplus 2}(f_{\frac12})]\|\\
\label{Patch.E21}
&= \|[v,\theta(hf_{\frac12})]\|+\|[v,\theta(f_{\frac12})]\|,
\end{align}
as 
\begin{equation}\label{Patch.Commute}
[\theta^{\oplus 2}(C([0,1])),(\theta\otimes\id_2)(U)]=0.
\end{equation}
For $i=0,1$, the compatibility of $\nu_i$ with $\theta$ gives 
\begin{equation}
\rho_i(hx)=\theta^{\oplus 2}(h)\rho_i(x).
\end{equation} 
Combining this with \eqref{Patch.E21} gives
\begin{align}
\lefteqn{\|\rho(hx)-\theta^{\oplus 2}(h)\rho(x)\|}\nonumber\\
&\leq \sum_{i=0,\frac12,1}\|\rho_i(hx)-\theta^{\oplus 2}(h)\rho_i(x)\|\nonumber\\
&\leq \|[v,\theta(hf_{\frac12})]\|+\|[v,\theta(f_{\frac12})]\|,
\end{align}
establishing (\ref{Patch.C5}).

Finally we show (\ref{Patch.C4}); by rescaling, it again suffices to do this when $x,y \in C_{0}((0,1),A)$ are contractions. The proof amounts to a long, but routine, calculation. We shall estimate
\begin{equation}\label{Patch.C8}
\|\rho_i(x)\rho_j(y)-\theta^{\oplus 2}(f_i)\rho_j(xy)\|,\quad i,j=0,\tfrac12,1.
\end{equation}
There are nine cases to consider.  For $i=j=0$ or $i=j=1$, compatibility of $\nu_i$ with $\theta$ gives
\begin{equation}\label{Patch.E10}
\theta^{\oplus 2}(f_i)\rho_j(xy)=\rho_i(x)\rho_j(y),
\end{equation}
while, for $i,j\in\{0,1\}$ with $i\neq j$, compatibility combines with $f_0f_1=0$, to yield
\begin{equation}\label{Patch.E11}
\theta^{\oplus 2}(f_i)\rho_j(xy)=0=\rho_i(x)\rho_j(y).
\end{equation}
Thus (\ref{Patch.C8}) evaluates to zero when $i,j\in\{0,1\}$.

When $i=j=\tfrac12$, compatibility gives
\begin{equation}
\rho_{\frac12}(x)\rho_{\frac12}(y)=V\begin{pmatrix}\theta(f_{\frac12})&0\\0&\theta(f_{\frac12})\end{pmatrix}\begin{pmatrix}\nu_0(f_{\frac12}xy)&0\\0&0\end{pmatrix}V^*,
\end{equation}
while
\begin{equation}
\theta^{\oplus 2}(f_{\frac12})\rho_{\frac12}(xy)=\begin{pmatrix}\theta(f_{\frac12})&0\\0&\theta(f_{\frac12})\end{pmatrix}V\begin{pmatrix}\nu_0(f_{\frac12}xy)&0\\0&0\end{pmatrix}V^*.
\end{equation}
Using these and (\ref{Patch.Commute}), we have
\begin{align}
\|\rho_{\frac12}(x)\rho_{\frac12}(y)-\theta^{\oplus 2}(f_{\frac12})\rho_{\frac12}(xy)\|
&\leq \|[\theta^{\oplus 2}(f_{\frac12}),V]\| \nonumber \\
& = \|[\theta(f_{\frac12}),v]\|.\label{Patch.E12}
\end{align}

For the cases in which $\{i,j\}=\{0,\frac12\}$, first note that \eqref{Patch.E1} and \eqref{Patch.E5} give
\begin{align}
\begin{pmatrix} \theta(f_0)&0 \\ 0&0 \end{pmatrix}&=
\begin{pmatrix} \theta(f_0)&0 \\ 0&0 \end{pmatrix}(\theta\otimes \id_{M_2} )(U)\nonumber\\
&=(\theta\otimes \id_{M_2})(U)\begin{pmatrix} \theta(f_0)&0 \\ 0&0 \end{pmatrix},
\end{align}
and therefore, using \eqref{Patch.DefV},
\begin{equation}
\begin{pmatrix} \theta(f_0)&0 \\ 0&0 \end{pmatrix} = 
\begin{pmatrix} \theta(f_0)&0 \\ 0&0 \end{pmatrix}V = 
V\begin{pmatrix} \theta(f_0)&0 \\ 0&0 \end{pmatrix}.
\label{Patch.E22}
\end{equation}
Then
\begin{align}
\rho_{\frac12}(x)\rho_0(y)&\stackrel{\hphantom{(\ref{Patch.E22})}}{=} V\begin{pmatrix}\theta(f_{\frac12})\nu_0(x)&0\\0&0\end{pmatrix}V^*\begin{pmatrix}\theta(f_0)\nu_0(y)&0\\0&0\end{pmatrix}\nonumber\\
&\stackrel{(\ref{Patch.E22})}{=}V\begin{pmatrix}\theta(f_0f_{\frac12})\nu_0(xy)&0\\0&0\end{pmatrix}\nonumber\\
&\stackrel{(\ref{Patch.E22})}{=}\begin{pmatrix}\theta(f_0f_{\frac12})\nu_0(xy)&0\\0&0\end{pmatrix}\nonumber\\
&\stackrel{\hphantom{(\ref{Patch.E22})}}{=}\theta^{\oplus 2}(f_{\frac12})\rho_0(xy),\label{Patch.E14}
\end{align}
and this shows that (\ref{Patch.C8}) vanishes for $(i,j)=(\frac12,0)$. Next compute
\newlength{\leftlengthA} \newlength{\leftlengthB} \newlength{\leftlengthC} \newlength{\leftlengthD} \newlength{\leftlengthE} \newlength{\leftlengthF} \newlength{\leftlengthG}
\settowidth{\leftlengthA}{$\displaystyle{\stackrel{\eqref{Patch.E1},\eqref{Patch.E5},\eqref{Patch.DefV}}=}$}
\settowidth{\leftlengthB}{$\displaystyle{\stackrel{\eqref{Patch.Commute}}=}$}
\settowidth{\leftlengthC}{$\displaystyle{_{\hphantom{\|[v,\theta(f_{\frac12})]\|}}\approx_{\|[v,\theta(f_{\frac12})]\|}}$}
\settowidth{\leftlengthD}{$\displaystyle{_{\hphantom{\|[v,\theta(f_0f_{\frac12})]\|}}\approx_{\|[v,\theta(f_0f_{\frac12})]\|}}$}
\settowidth{\leftlengthE}{$\displaystyle{\stackrel{\eqref{Patch.E1},\eqref{Patch.E5}}=}$}
\settowidth{\leftlengthF}{$\displaystyle{_{\hphantom{\|[v,\theta(f_0f_{\frac12})]\|}}\approx_{\|[v,\theta(f_0f_{\frac12})]\|}}$}
\settowidth{\leftlengthG}{$\displaystyle{_{\hphantom{\|[v,\theta(f_{\frac12})]\|}}\approx_{\|[v,\theta(f_{\frac12})]\|}}$}
\newlength{\rightlengthA} \newlength{\rightlengthB} \newlength{\rightlengthC} \newlength{\rightlengthD} \newlength{\rightlengthE} \newlength{\rightlengthF} \newlength{\rightlengthG}
\newlength{\maxrightlength}
\setlength{\leftlengthA}{-0.5\leftlengthA} \setlength{\leftlengthB}{-0.5\leftlengthB} \setlength{\leftlengthC}{-0.5\leftlengthC} \setlength{\leftlengthD}{-0.5\leftlengthD} \setlength{\leftlengthE}{-0.5\leftlengthE} \setlength{\leftlengthF}{-0.5\leftlengthF} \setlength{\leftlengthG}{-0.5\leftlengthG}
\setlength{\maxrightlength}{-\leftlengthD}
\setlength{\rightlengthA}{\maxrightlength} \addtolength{\rightlengthA}{\leftlengthA} \setlength{\rightlengthB}{\maxrightlength} \addtolength{\rightlengthB}{\leftlengthB} \setlength{\rightlengthC}{\maxrightlength} \addtolength{\rightlengthC}{\leftlengthC} \setlength{\rightlengthD}{\maxrightlength} \addtolength{\rightlengthD}{\leftlengthD} \setlength{\rightlengthE}{\maxrightlength} \addtolength{\rightlengthE}{\leftlengthE} \setlength{\rightlengthF}{\maxrightlength} \addtolength{\rightlengthF}{\leftlengthF} \setlength{\rightlengthG}{\maxrightlength} \addtolength{\rightlengthG}{\leftlengthG}
\begin{align}
\notag
&\hspace*{-2em} \begin{pmatrix}\theta(f_0)&0\\0&\theta(f_0)\end{pmatrix}V\begin{pmatrix}\theta(f_{\frac12})&0 \\ 0&\theta(f_{\frac12}) \end{pmatrix} \\
\notag
&\hspace*{\leftlengthA} 
\stackrel{\eqref{Patch.E1},\eqref{Patch.E5},\eqref{Patch.DefV}}=
\hspace*{\rightlengthA} 
\begin{pmatrix}\theta(f_0)&0\\0&\theta(f_0)v\end{pmatrix}( \theta\otimes\id_{M_2})(U)\begin{pmatrix}\theta(f_{\frac12})&0 \\ 0&\theta(f_{\frac12})\end{pmatrix} \\
\notag
&\hspace*{\leftlengthB}
\stackrel{\eqref{Patch.Commute}}=
\hspace*{\rightlengthB} 
\begin{pmatrix}\theta(f_0f_{\frac12})&0\\0&\theta(f_0)v\theta(f_{\frac12})\end{pmatrix}( \theta\otimes\id_{M_2})(U)\\
\notag
&\hspace*{\leftlengthC}
{}_{\hphantom{\|[v,\theta(f_{\frac12})]\|}}\approx_{\|[v,\theta(f_{\frac12})]\|}
\hspace*{\rightlengthC} 
\begin{pmatrix}\theta(f_0f_{\frac12})&0\\0&\theta(f_0f_{\frac12})v\end{pmatrix}( \theta\otimes\id_{M_2})(U)\\
\notag
&\hspace*{\leftlengthD}
{}_{\hphantom{\|[v,\theta(f_0f_{\frac12})]\|}}\approx_{\|[v,\theta(f_0f_{\frac12})]\|}
\hspace*{\rightlengthD} 
\begin{pmatrix}\theta(f_0f_{\frac12})&0\\0&v\theta(f_0f_{\frac12})\end{pmatrix}( \theta\otimes\id_{M_2})(U)\\
\notag
&\hspace*{\leftlengthE}
\stackrel{\eqref{Patch.E1},\eqref{Patch.E5}}=
\hspace*{\rightlengthE} 
\begin{pmatrix}\theta(f_0f_{\frac12})&0\\0&v\theta(f_0f_{\frac12})\end{pmatrix} \\
\notag
&\hspace*{\leftlengthF}
{}_{\hphantom{\|[v,\theta(f_0f_{\frac12})]\|}}\approx_{\|[v,\theta(f_0f_{\frac12})]\|}
\hspace*{\rightlengthF} 
\begin{pmatrix}\theta(f_0f_{\frac12})&0\\0&\theta(f_0f_{\frac12})v\end{pmatrix} \\
&\hspace*{\leftlengthG}
{}_{\hphantom{\|[v,\theta(f_{\frac12})]\|}}\approx_{\|[v,\theta(f_{\frac12})]\|}
\hspace*{\rightlengthG} 
\begin{pmatrix}\theta(f_0f_{\frac12})&0\\0&\theta(f_0)v\theta(f_{\frac12})\end{pmatrix}.
\label{Patch.E6}
\end{align}
Then with $\varepsilon_0:=2(\|[v,\theta(f_{\frac12})]\|+\|[v,\theta(f_0f_{\frac12})]\|)$, we have
\begin{align}
\rho_0(x)\rho_{\frac12}(y)&\stackrel{\hphantom{\eqref{Patch.E22}{}_{\varepsilon_0}}}{=_{\hphantom{\varepsilon_{0}}}} \begin{pmatrix}\nu_0(x)\theta(f_0)&0\\0&0\end{pmatrix}V\begin{pmatrix}\theta(f_{\frac12})\nu_{0}(y)&0\\0&0\end{pmatrix}V^*\nonumber\\
&\stackrel{\eqref{Patch.E22}\hphantom{{}_{\varepsilon_0}}}{=_{\hphantom{\varepsilon_{0}}}}\begin{pmatrix}\theta(f_0f_{\frac12})\nu_0(xy)&0\\0&0\end{pmatrix}V^*\nonumber\\
\notag
&\stackrel{\hphantom{\eqref{Patch.E22}{}_{\varepsilon_0}}}{=_{\hphantom{\varepsilon_{0}}}} \begin{pmatrix}\theta(f_0f_{\frac12})&0 \\ 0&\theta(f_0)v\theta(f_{\frac12})\end{pmatrix}\begin{pmatrix} \nu_0(xy)&0\\0&0 \end{pmatrix}V^* \\
&\stackrel{(\ref{Patch.E6})\hphantom{_{\varepsilon_0}}}{\approx_{\varepsilon_0}}\begin{pmatrix}\theta(f_0)&0\\0&\theta(f_0)\end{pmatrix}V\begin{pmatrix}\nu_0(f_{\frac12}xy)&0\\0&0\end{pmatrix}V^*\nonumber\\
&\stackrel{\hphantom{\eqref{Patch.E22}{}_{\varepsilon_0}}}{=_{\hphantom{\varepsilon_{0}}}}\theta^{\oplus 2}(f_0)\rho_{\frac12}(xy).\label{Patch.E13}
\end{align}

For the cases when $\{i,j\}=\{\frac12,1\}$, similarly to \eqref{Patch.E6}, we compute 
\settowidth{\leftlengthA}{$\displaystyle{\stackrel{\eqref{Patch.E2},\eqref{Patch.E5},\eqref{Patch.DefV}}=}$}
\settowidth{\leftlengthB}{$\displaystyle{\stackrel{\eqref{Patch.Commute}}=}$}
\settowidth{\leftlengthC}{$\displaystyle{_{\|[v,\theta(f_{\frac12})]\|}\approx_{\|[v,\theta(f_{\frac12})]\|}}$}
\settowidth{\leftlengthD}{$\displaystyle{_{\|[v,\theta(f_1f_{\frac12})]\|}\approx_{\|[v,\theta(f_1f_{\frac12})]\|}}$}
\settowidth{\leftlengthE}{$\displaystyle{\stackrel{\eqref{Patch.E2},\eqref{Patch.E5}}=}$}
\settowidth{\leftlengthF}{$\displaystyle{_{\|[v,\theta(f_1f_{\frac12})]\|}\approx_{\|[v,\theta(f_1f_{\frac12})]\|}}$}
\setlength{\leftlengthA}{-0.5\leftlengthA} \setlength{\leftlengthB}{-0.5\leftlengthB} \setlength{\leftlengthC}{-0.5\leftlengthC} \setlength{\leftlengthD}{-0.5\leftlengthD} \setlength{\leftlengthE}{-0.5\leftlengthE} \setlength{\leftlengthF}{-0.5\leftlengthF}
\setlength{\maxrightlength}{-\leftlengthD}
\setlength{\rightlengthA}{\maxrightlength} \addtolength{\rightlengthA}{\leftlengthA} \setlength{\rightlengthB}{\maxrightlength} \addtolength{\rightlengthB}{\leftlengthB} \setlength{\rightlengthC}{\maxrightlength} \addtolength{\rightlengthC}{\leftlengthC} \setlength{\rightlengthD}{\maxrightlength} \addtolength{\rightlengthD}{\leftlengthD} \setlength{\rightlengthE}{\maxrightlength} \addtolength{\rightlengthE}{\leftlengthE} \setlength{\rightlengthF}{\maxrightlength} \addtolength{\rightlengthF}{\leftlengthF}
\begin{align}
\notag
&\hspace*{-2em}\begin{pmatrix}\theta(f_1)&0\\0&\theta(f_1)\end{pmatrix}V\begin{pmatrix}\theta(f_{\frac12})&0 \\ 0&\theta(f_{\frac12}) \end{pmatrix} \\
\notag
&\hspace*{\leftlengthA}
\stackrel{\eqref{Patch.E2},\eqref{Patch.E5},\eqref{Patch.DefV}}=
\hspace*{\rightlengthA}
\begin{pmatrix}0&\theta(f_1)v\\ \theta(f_1)&0\end{pmatrix}\hspace*{-0.3em}(\theta\otimes\id_{M_2})(U)\hspace*{-0.3em}\begin{pmatrix}\theta(f_{\frac12})&0 \\ 0&\theta(f_{\frac12})\end{pmatrix} \\
\notag
&\hspace*{\leftlengthB}
\stackrel{\eqref{Patch.Commute}}=
\hspace*{\rightlengthB}
\begin{pmatrix}0&\theta(f_1)v\theta(f_{\frac12})\\ \theta(f_1f_{\frac12})&0\end{pmatrix}(\theta\otimes\id_{M_2})(U) \\
\notag
&\hspace*{\leftlengthC}
\phantom{_{\|[v,\theta(f_{\frac12})]\|}}\approx_{\|[v,\theta(f_{\frac12})]\|}
\hspace*{\rightlengthC}
\begin{pmatrix}0&\theta(f_1f_{\frac12})v\\ \theta(f_1f_{\frac12})&0\end{pmatrix}(\theta\otimes\id_{M_2})(U) \\
\notag
&\hspace*{\leftlengthD}
\phantom{_{\|[v,\theta(f_1f_{\frac12})]\|}}\approx_{\|[v,\theta(f_1f_{\frac12})]\|}
\hspace*{\rightlengthD}
\begin{pmatrix}0&v\theta(f_1f_{\frac12})\\ \theta(f_1f_{\frac12})&0\end{pmatrix}(\theta\otimes\id_{M_2})(U) \\
\label{Patch.E23a}
&\hspace*{\leftlengthE}
\stackrel{\eqref{Patch.E2},\eqref{Patch.E5}}=
\hspace*{\rightlengthE}
\begin{pmatrix}v\theta(f_1f_{\frac12})&0\\ 0&\theta(f_1f_{\frac12})\end{pmatrix} \\
\label{Patch.E23}
&\hspace*{\leftlengthF}
\phantom{_{\|[v,\theta(f_1f_{\frac12})]\|}}\approx_{\|[v,\theta(f_1f_{\frac12})]\|}
\hspace*{\rightlengthF}
\begin{pmatrix}\theta(f_1f_{\frac12})v&0\\ 0&\theta(f_1f_{\frac12})\end{pmatrix}.
\end{align}
With $\varepsilon_1:=\|[v,\theta(f_{\frac12})]\|+2\|[v,\theta(f_1f_{\frac12})]\|$, this gives
\begin{align}
\rho_1(x)\rho_{\frac12}(y) \hspace*{-0.5em}
&\stackrel{\hphantom{\eqref{Patch.E2},\eqref{Patch.E5},\eqref{Patch.DefV}}}{=_{\hphantom{\varepsilon_{1}}}}\begin{pmatrix}\nu_1(f_1x)&0\\0&0\end{pmatrix}V\begin{pmatrix}\nu_{0}(f_{\frac12}y)&0\\0&0\end{pmatrix}V^*\nonumber\\
\notag
&\stackrel{\hphantom{\eqref{Patch.E2},\eqref{Patch.E5},\eqref{Patch.DefV}}}{\stackrel{\eqref{Patch.E23}\hphantom{_{\varepsilon_1}}}{\approx_{\varepsilon_1}}} \begin{pmatrix}\theta(f_1)\nu_1(f_{\frac12}x)v\nu_0(y)&0\\0&0\end{pmatrix}V^* \\
&\stackrel{\eqref{Patch.E2},\eqref{Patch.E5},\eqref{Patch.DefV}}{=_{\hphantom{\varepsilon_{1}}}} (\theta\otimes\id_{M_2})(U^{*})\begin{pmatrix}0&0\\ \nu_1(f_1f_{\frac12}x)v\nu_0(y)&0\end{pmatrix}V^*,
\end{align}
whereas 
\begin{align}
\rho_{\frac12}(f_1xy) \hspace*{-0.5em}
 &\stackrel{\hphantom{\eqref{Patch.E2},\eqref{Patch.E5},\eqref{Patch.DefV}}}=V \begin{pmatrix} \nu_0(f_1f_{\frac12}xy)&0\\0&0 \end{pmatrix} V^* \nonumber\\
&\stackrel{\eqref{Patch.E2},\eqref{Patch.E5},\eqref{Patch.DefV}}= (\theta \otimes \id_{M_2})(U^*) \begin{pmatrix} 0&0\\v\nu_0(f_1f_{\frac12}x)\nu_0(y)&0 \end{pmatrix} V^*.
\end{align}
Putting these together and using (\ref{Patch.E21}) with $h:=f_1$, we obtain
\begin{align}
\lefteqn{\|\rho_1(x)\rho_{\frac12}(y)-\theta^{\oplus 2}(f_1)\rho_{\frac12}(xy)\|}\nonumber\\
&\leq  \|\rho_1(x)\rho_{\frac12}(y)-\rho_{\frac12}(f_1xy)\| \nonumber \\
& \qquad+\|\rho_{\frac12}(f_1xy)-\theta^{\oplus 2}(f_1)\rho_{\frac12}(xy)\|\nonumber\\
&\leq \varepsilon_1+\|\nu_1(f_{\frac12}f_1x)v-v\nu_0(f_{\frac12}f_1x)\|\nonumber\\
&\qquad +\|[v,\theta(f_1f_{\frac12})]\|+\|[v,\theta(f_{\frac12})]\|\nonumber\\
&= \|\nu_1(f_{\frac12}f_1x)v-v\nu_0(f_{\frac12}f_1x)\| \nonumber \\
& \qquad+3\|[v,\theta(f_1f_{\frac12})]\|+2\|[v,\theta(f_{\frac12})]\|.
\label{Patch.E15}
\end{align}

For the final case, we compute
\begin{align}
V\begin{pmatrix}\theta(f_{\frac12}f_1)&0\\0&0\end{pmatrix} \hspace*{-0.5em} &\stackrel{\eqref{Patch.E2},\eqref{Patch.E5},\eqref{Patch.DefV}}{_{\hphantom{\|[\theta(f_{\frac12}f_1),v]\|}}=_{\hphantom{\|[\theta(f_{\frac12}f_1),v]\|}}}(\theta\otimes \id_{M_2})(U^{*})\begin{pmatrix}0&0\\v\theta(f_{\frac12}f_1)&0\end{pmatrix}\nonumber\\
&\stackrel{\hphantom{\eqref{Patch.E2},\eqref{Patch.E5},\eqref{Patch.DefV}}}{_{\hphantom{\|[\theta(f_{\frac12}f_1),v]\|}}\approx_{\|[\theta(f_{\frac12}f_1),v]\|}}(\theta\otimes \id_{M_2})(U^{*})\begin{pmatrix}0&0\\\theta(f_{\frac12}f_1)v&0\end{pmatrix}\nonumber\\
&\stackrel{\eqref{Patch.E2},\eqref{Patch.E5},\eqref{Patch.DefV}}{_{\hphantom{\|[\theta(f_{\frac12}f_1),v]\|}}=_{\hphantom{\|[\theta(f_{\frac12}f_1),v]\|}}}\begin{pmatrix}\theta(f_{\frac12}f_1)v&0\\0&0\end{pmatrix}.
\label{Patch.E24}
\end{align}
Therefore, with $\varepsilon_1':=\|[v,\theta(f_{\frac12})]\|+\|[v,\theta(f_1f_{\frac12})]\|$, we have
\begin{align}
\rho_{\frac12}(x)\rho_1(y)
&\stackrel{\hphantom{\eqref{Patch.E24}}{}_{\hphantom{\|[\theta(f_{\frac12}f_1),v]\|}}}{=_{\hphantom{\|[\theta(f_{\frac12}f_1),v]\|}}} V\begin{pmatrix}\nu_0(f_{\frac12}x)&0\\0&0\end{pmatrix}V^*\begin{pmatrix}\nu_1(f_1y)&0\\0&0\end{pmatrix}\nonumber\\
&\stackrel{\eqref{Patch.E23a}{}_{\hphantom{\|[\theta(f_{\frac12}f_1),v]\|}}}{\approx_{\varepsilon'_{1}\hphantom{\|[f_{\frac12}f_1),v]\|}}} V\begin{pmatrix}\nu_0(x)&0\\0&0\end{pmatrix}\begin{pmatrix}\theta(f_{\frac12}f_1)v^*\nu_1(y)&0\\0&\theta(f_{\frac12}f_1)\end{pmatrix} \nonumber \\
&\stackrel{\hphantom{\eqref{Patch.E24}}{}_{\hphantom{\|[\theta(f_{\frac12}f_1),v]\|}}}{=_{\hphantom{\|[\theta(f_{\frac12}f_1),v]\|}}} V\begin{pmatrix}\theta(f_{\frac12}f_1)&0\\0&0\end{pmatrix}\begin{pmatrix}\nu_0(x)v^*\nu_1(y)&0\\0&0\end{pmatrix}\nonumber\\
&\stackrel{\eqref{Patch.E24}{}_{\hphantom{\|[\theta(f_{\frac12}f_1),v]\|}}}{\approx_{\|[\theta(f_{\frac12}f_1),v]\|}} \begin{pmatrix}\theta(f_1)\theta(f_{\frac12})v\nu_0(x)v^*\nu_1(y)&0\\0&0\end{pmatrix},\label{Patch.E26}
\end{align}
while
\begin{equation}
\theta^{\oplus 2}(f_{\frac12})\rho_{1}(xy) = \begin{pmatrix}\theta(f_1)\nu_1(f_{\frac12}x)\nu_1(y)&0\\0&0\end{pmatrix}.
\end{equation}
Putting these together yields the estimate
\begin{align}
\lefteqn{\|\rho_{\frac12}(x)\rho_1(y)-\theta^{\oplus 2} (f_{\frac12})\rho_1(xy)\|}\nonumber\\
&\leq \varepsilon'_1+\|[\theta(f_1f_{\frac12}),v]\| \nonumber \\
& \qquad +\|\theta(f_{\frac12})v\nu_0(x)v^*-\nu_1(f_{\frac12}x)\| \nonumber \\
&\leq \varepsilon'_1+\|[\theta(f_1f_{\frac12}),v)\| \nonumber \\
& \qquad +\|[\theta(f_{\frac12}),v]\|+\|v\nu_0(f_{\frac12}x)-\nu_1(f_{\frac12}x)v\|\nonumber\\
&= 2\|[\theta(f_{\frac12}),v]\|+2\|[\theta(f_1f_{\frac12}),v]\| \nonumber \\
& \qquad +\|v\nu_0(f_{\frac12}x)-\nu_1(f_{\frac12}x)v\|.
\label{Patch.E16}
\end{align}

In conclusion, (\ref{Patch.C8}) vanishes for $(i,j)=(0,1)$, $(1,0)$, $(0,0)$, $(1,1)$, $(\frac12,0)$ and in the other four cases $(i,j)=(\frac12,\frac12)$, $(0,\frac12)$, $(1,\frac12)$, $(\frac12,1)$, we have the estimates (\ref{Patch.E12}), (\ref{Patch.E13}), (\ref{Patch.E15}) and (\ref{Patch.E16}).   Thus
\begin{align}
\lefteqn{ \|\rho(x)\rho(y)-\rho(xy)\| }\nonumber\\
&\stackrel{(\ref{Patch.E3}), (\ref{Patch.E20})}{=} \Big\|\sum_{i,j=0,\frac12,1}\rho_i(x)\rho_j(y)-\sum_{i,j=0,\frac12,1}\theta^{\oplus 2}(f_i) \rho_j(xy) \Big\|\nonumber\\
&\stackrel{\phantom{(\ref{Patch.E3}), (\ref{Patch.E20})}}{\leq}  \sum_{i,j=0,\frac12,1}\|\rho_i(x)\rho_j(y)- \theta^{\oplus 2}(f_i) \rho_j(xy)\|\nonumber\\
\notag
&\stackrel{\phantom{(\ref{Patch.E3}), (\ref{Patch.E20})}}{\leq}  7\|[v,\theta(f_{\frac12})]\|+2\|[v,\theta(f_0f_{\frac12})]\|+5\|[v,\theta(f_{\frac12}f_1)]\| \\
\notag
&\phantom{\stackrel{\phantom{(\ref{Patch.E3}), (\ref{Patch.E20})}}{\leq}} \quad+\|\nu_1(f_{\frac12}f_1x)v-v\nu_0(f_{\frac12}f_1x)\|\nonumber\\
&\phantom{\stackrel{\phantom{(\ref{Patch.E3}), (\ref{Patch.E20})}}{\leq}} \qquad+\|\nu_1(f_{\frac12}x)v-v\nu_0(f_{\frac12}x)\|,
\end{align}
establishing (\ref{Patch.C4}) (recalling that $x$ and $y$ are assumed to be contractions).
\end{proof}

\section{Proof of Theorem \ref{thm:MainThm}}\label{Section:Proof}

\noindent
In this section we give the proof of our main result. By Remark \ref{QDNonUnital}  (\ref{QDNonUnital3}) (and since a $\mathrm{C}^{*}$-algebra satisfies the UCT if and only if its smallest unitisation does), it suffices to prove the theorem when $A$ is unital. So let $A$ be a separable, unital and nuclear $\mathrm{C}^*$-algebra in the UCT class and let $\tau_A\in T(A)$ be faithful.  Fix a finite subset $\mathcal F_A\subset A$ (which we may take to consist of self-adjoint contractions and contain $1_A$) and a tolerance $\varepsilon>0$.  We will produce $N\in\mathbb N$ and a completely positive map $\Psi:A\rightarrow  \Q_\omega\otimes M_{2N} \cong \Q_\omega$ such that
\begin{align}
&\|\Psi(ab)-\Psi(a)\Psi(b)\|<\varepsilon,\quad a,b\in \mathcal F_A,\label{Objective.1}\\
&\tfrac12\tau_A(a)=(\tau_{\Q_\omega}\otimes\tau_{M_{2N}})(\Psi(a)),\quad a\in A. \label{Objective.2}
\end{align}
Once this is achieved, quasidiagonality of $\tau_A$ follows from the characterisation in Proposition \ref{Prop.EasyQDT} (\ref{EasyQDT.3}).

Define functions $f,\grave g,\acute g, \grave g_{-1}, \acute g_{+1} \in C([0,1])$ by 
\begin{align}
f|_{[0,\frac19]}&\equiv 0, & f|_{[\frac29, \frac79]} \equiv 1, & \quad \qquad f|_{[\frac89,1]} \equiv 0, \nonumber\\
\notag
\grave g|_{[0,\frac89]}&\equiv 1, & \grave g(1)=0, \\
\notag
\acute g(0)&=0, & \acute g|_{[\frac19, 1]} \equiv 1, \\
\notag
\grave g_{-1}|_{[0,\frac29]}&\equiv 1, & \grave g_{-1}|_{[\frac13,1]}\equiv0, \\
\acute g_{+1}|_{[0,\frac23]}&\equiv 0, & \acute g_{+1}|_{[\frac79, 1]}\equiv 1,\label{Proof.Deffg}
\end{align}
and such that each function is linear on the complements of the intervals used to define it. These functions are shown in Figure \ref{NewFig}.

\begin{figure}[h!]
\begin{picture}(240,80)
\setlength{\unitlength}{0.15mm}
\linethickness{0.15mm}

\put(0,30){$0$}  
\put(0,180){$1$}  
\put(20,35){\line(1,0){540}}  
\put(20,35){\line(0,1){5}}  
\put(80,35){\line(0,1){5}}  
\put(140,35){\line(0,1){5}}  
\put(200,35){\line(0,1){5}}  
\put(260,35){\line(0,1){5}}  
\put(320,35){\line(0,1){5}}  
\put(380,35){\line(0,1){5}}  
\put(440,35){\line(0,1){5}}  
\put(500,35){\line(0,1){5}}  
\put(560,35){\line(0,1){5}}  

\put(17,5){$0$}  
\put(72,0){$\frac19$}  
\put(132,0){$\frac29$}  
\put(192,0){$\frac39$}  
\put(252,0){$\frac49$}  
\put(312,0){$\frac59$}  
\put(372,0){$\frac69$}  
\put(432,0){$\frac79$}  
\put(492,0){$\frac89$}  
\put(557,5){$1$}  
\color{b}
\put(20,35){\line(2,5){60}}  
\put(80,186){\line(1,0){480}}  
\put(24,105){$\acute g$}  
\color{black}
\put(80,35){\line(2,5){60}}  
\put(140,184){\line(1,0){300}}  
\put(440,185){\line(2,-5){60}}  
\put(278,155){$f$}  
\color{c}
\put(20,185){\line(1,0){480}}  
\put(500,185){\line(2,-5){60}}  
\put(543,105){$\grave g$}  
\color{a}
\put(20,184){\line(1,0){120}}  
\put(140,185){\line(2,-5){60}}  
\put(183,105){$\grave g_{-1}$}  
\color{a}
\put(380,35){\line(2,5){60}}  
\put(440,187){\line(1,0){120}}  
\put(360,105){$\acute g_{+1}$}  

\end{picture}
\caption{$f$, $\grave{g}$, $\acute{g}$, $\grave{g}_{-1}$, $\acute{g}_{+1}$}\label{NewFig}
\end{figure}

Define
\begin{align}
\notag
\mathcal F &:= \{f \otimes a \mid a \in \mathcal F_A\} \cup \{f \otimes ab \mid a,b \in \mathcal F_A\} \subset C_0((0,1),A),\label{DefF} \\
\mathcal F' &:= \{f,\grave g,\acute g,\grave g_{-1},\acute g_{+1}\} \subset C([0,1])
\end{align}
and choose
\begin{equation}\label{DefEta} 0 < \eta < \tfrac\varepsilon{24}. \end{equation}
As in Remark \ref{rmk:Patching}, use Lemma \ref{lem:Patching} to find a finite subset $\mathcal G\subset C_0((\frac13,\frac23),A)$ and $\delta>0$ such that Property \ref{Def-G} below is satisfied.
Note that we change from the matrix notation of Lemma \ref{lem:Patching} (\ref{Patch.C1}), (\ref{Patch.C2}) to $\oplus$-notation in (\ref{Def-G.1}), (\ref{Def-G.2}) below; see Notation \ref{notation}.

\begin{property}\label{Def-G}
Let $E$ be a unital $\mathrm{C}^*$-algebra with trace $\tau_E$. Let $\theta:C([0,1])\rightarrow E$ be a unital $^*$-homomorphism and let $\nu_0,\nu_1:C_0((0,1),A)\rightarrow E$ be compatible $^*$-homomorphisms satisfying $\tau_E\circ\nu_i=\tau_{\leb} \otimes \tau_A$ for $i=0,1$, where $\tau_{\leb}$ is the Lebesgue trace defined in (\ref{eq:LebTraceDef1}).
Suppose there exists a unitary $v \in E$ satisfying
\begin{equation}
\label{eq:Def-G-v}
v\nu_0(x)v^*\approx_\delta \nu_1(x),\quad x\in\mathcal G.
\end{equation}
Then there exists a completely positive map $\rho:C_0((0,1),A)\rightarrow M_2(E) \cong E \otimes M_{2}$ such that
\begin{enumerate}[(i)]
\item \label{Def-G.1}
$\rho|_{C_0((0,\frac13),A)} = \nu_0|_{C_0((0,\frac13),A)}\oplus 0_E$, 
\item \label{Def-G.2}
$\rho|_{C_0((\frac23,1),A)} =\nu_1|_{C_0((\frac23,1),A)}\oplus 0_E$, 
\item $(\tau_{E} \otimes \mathrm{Tr}_2)\circ\rho=\tau_{\leb} \otimes \tau_A$, \label{Def-G.3} 
\item $\rho(xy)\approx_\eta\rho(x)\rho(y)$, for $x,y\in \mathcal F$, and \label{Def-G.4}
\item $\rho(hx)\approx_\eta \theta^{\oplus 2}(h)\rho(x)$, for $x\in\mathcal F$, $h\in\mathcal F'$. \label{Def-G.5}
\end{enumerate}
\end{property}

Define 
\begin{equation}
 C:= \{x\in C([0,1],A) \mid \exists \lambda \in \mathbb C \text{ s.t.\ }x|_{[0,\frac 13]}\equiv x|_{[\frac23,1]}\equiv\lambda 1_{A} \}.
\label{eq:Cdef}
\end{equation}
As an abstract algebra, $C$ is isomorphic to $C_0((\frac13,\frac23),A)^\sim$, and so, as noted after Definition \ref{UCT:def}, $C$ satisfies the UCT.  However we regard $C$ as the subalgebra of $C([0,1],A)$ given in (\ref{eq:Cdef}) so that $\tau_{\leb}\otimes \tau_A$ restricts to a trace on $C$.

Since $\tau_A$ and $\tau_{\leb}$ are faithful traces, so too is $\tau_{\leb}\otimes\tau_A$ by Kirchberg's slice lemma (see \cite[Lemma 4.1.9]{R:Book} in R{\o}rdam's book for a published version). Thus we may define a control function $\Delta:C^1_+ \setminus \{0\} \to \mathbb N$ by taking $\Delta(c)$ to be a square number such that 
\begin{equation}
\label{eq:DeltaDef}
(\tau_{\leb} \otimes \tau_A)(c) > \frac{2}{\sqrt{\Delta(c)}}.
\end{equation}
Apply the stable uniqueness theorem (Theorem \ref{thm:StableUniqueness}) to $C$ with respect to the control function $\Delta$, and the finite set $\mathcal G$ and tolerance $\delta$ of Property \ref{Def-G}, to obtain $n\in\mathbb N$ satisfying Property \ref{Def-N} below.
Note that since $C$ is nuclear, so are all $^*$-homomorphisms whose domain is $C$.
Also recall that $\Q_\omega$ is an admissible target algebra of finite type.

\begin{property}\label{Def-N}
Let $D$ be a $\mathrm C^*$-algebra isomorphic to $\Q_\omega$, $\iota:C\rightarrow D$ a $\Delta$-full unital $^*$-homomorphism and $\phi,\psi:C\rightarrow D$ unital $^*$-homomorphisms satisfying $\phi_*=\psi_*:\underline{K}(C)\rightarrow\underline{K}(D)$. Then there exists a unitary $u\in M_{n+1}(D)$ such that
\begin{equation}
u(\phi(x)\oplus\iota^{\oplus n}(x))u^*\approx_{\delta} \psi(x)\oplus\iota^{\oplus n}(x),\quad x\in\mathcal G.
\end{equation}
\end{property}

In terms of this fixed integer $n$, define integers
\begin{equation}
N:= 2n, \quad m:=4n+1,
\end{equation}
and relatively open intervals $I_i\subset [0,1]$ for $i=0,\dots,N+1$ by
\begin{equation}
I_i:=\left(\tfrac{2i-2}m, \tfrac{2i+1}m\right) \cap [0,1], \quad i=0,\dots,N+1.
\end{equation}
In this way $I_0=[0,\tfrac1m)$ and $I_{N+1}=(1-\tfrac1m,1]$. Note that
\begin{equation}\label{e5.12}
I_i\cap I_j=\emptyset,\quad i,j=0,\dots,N+1,\ |i-j|>1.
\end{equation}
The interval $I_i$ has length $\tfrac3m$ when $i=1,\dots,N$.

For $i=1,\dots,N$, define functions $\alpha_i:[0,1] \to [0,1]$ by:
\begin{equation}\label{Proof.DefAlpha} \alpha_i|_{[0,\frac{2i-2}m]} \equiv 0, \quad \alpha_i|_{[\frac{2i+1}m,1]} \equiv 1,\quad \alpha_i|_{[\frac{2i-2}m,\frac{2i+1}m]}\text{ is linear.} \end{equation}
Note that $\alpha_i$ maps $I_i$ to $(0,1)$ by a linear homeomorphism, stretching by $\frac{m}3$.
Hence, let us record for future use,
\begin{equation}
\label{eq:alphaTr}
(\tau_{\leb} \otimes \tau_A)(x\circ\alpha_i) = \frac 3m (\tau_{\leb} \otimes \tau_A)(x), \quad x \in C_0((0,1),A).
\end{equation}
We then define positive contractions in $C([0,1])$ by
\begin{equation}\label{Deffi}
f_i:=f \circ\alpha_i, \quad \grave g_i:=\grave g \circ \alpha_i, \quad \acute g_i:= \acute g \circ \alpha_i,\quad i=1,\dots,N,
\end{equation}
where $f,\grave g, \acute g$ are defined in \eqref{Proof.Deffg}. Set $\grave{g}_0:=\grave{g}_{-1}\circ\alpha_1$ and $\acute{g}_{N+1}:=\acute{g}_{+1} \circ \alpha_N$.  These definitions ensure that
\begin{align}
\grave{g}_{i-1}&=\grave{g}_{-1}\circ\alpha_i,\quad\text{and}\label{Deffia.1}\\
\acute{g}_{i+1}&=\acute{g}_{+1}\circ\alpha_i,\quad i=1,\dots,N\label{Deffia.2}.
\end{align}

Define $f_0$ and $f_{N+1}$ by
\begin{align}
\notag
f_0|_{[0,\frac1{3m}]} &\equiv 1, & f_0|_{[\frac2{3m},1]} \equiv 0,\\
f_{N+1}|_{[0,1-\frac{2}{3m}]} &\equiv 0, & f_{N+1}|_{[1-\frac1{3m},1]} \equiv 1,
\end{align}
and again demanding that $f_0$ and $f_{N+1}$ are linear on the complements of the intervals of definition above. Note that 
\begin{equation}f_i\in C_0(I_i), \quad i=0,\dots,N+1.\label{finCOIi}
\end{equation} These functions and intervals are displayed in Figure \ref{FigI}.

\begin{figure}[h!]
\begin{picture}(350,130)
\setlength{\unitlength}{0.15mm}
\linethickness{0.15mm}

\put(0,100){$0$}  
\put(0,202){$1$}  
\put(20,105){\line(1,0){765}}  
\put(20,105){\line(0,1){5}}  
\put(71,105){\line(0,1){5}}  
\put(224,105){\line(0,1){5}}  
\put(275,105){\line(0,1){5}}  
\put(326,105){\line(0,1){5}}  
\put(377,105){\line(0,1){5}}  
\put(428,105){\line(0,1){5}}  
\put(479,105){\line(0,1){5}}  
\put(530,105){\line(0,1){5}}  
\put(581,105){\line(0,1){5}}  
\put(734,105){\line(0,1){5}}  
\put(785,105){\line(0,1){5}}  
\put(11,75){$0$}  
\put(59,70){$\tfrac1m$}  
\put(202,70){$\tfrac{2i-4}m$}  
\put(304,70){$\tfrac{2i-2}m$}  
\put(355,70){$\tfrac{2i-1}m$}  
\put(416,70){$\tfrac{2i}m$}  
\put(457,70){$\tfrac{2i+1}m$}  
\put(559,70){$\tfrac{2i+3}m$}  
\put(698,70){$1\hspace*{-1mm}-\hspace*{-1mm}\tfrac1m$}  
\put(779,75){$1$}  
\put(126,75){$\cdots$}  
\put(636,75){$\cdots$}  
\put(20,208){\line(1,0){17}}  
\put(37,207){\line(1,-6){17}}  
\put(23,222){$f_0$}  
\color{a}
\put(37,105){\line(1,6){17}}  
\put(54,208){\line(1,0){51}}  
\put(105,207){\line(1,-6){17}}  
\put(67,222){$f_1 $}  
\color{a}
\put(241,105){\line(1,6){17}}  
\put(258,208){\line(1,0){85}}  
\put(343,207){\line(1,-6){17}}  
\put(278,222){$f_{i-1}$}  
\color{black}
\put(343,105){\line(1,6){17}}  
\put(360,208){\line(1,0){85}}  
\put(445,207){\line(1,-6){17}}  
\put(388,222){$f_i$}  
\color{a}
\put(445,105){\line(1,6){17}}  
\put(462,207){\line(1,0){85}}  
\put(547,207){\line(1,-6){17}}  
\put(482,222){$f_{i+1}$}  
\color{b}
\put(326,105){\line(1,6){17}}  
\put(343,206){\line(1,0){442}}  
\put(303,149){$\acute g_i$}  
\color{c}
\put(20,207){\line(1,0){442}}  
\put(462,207){\line(1,-6){17}}  
\put(486,149){$\grave g_i$}  
\color{a}
\put(683,105){\line(1,6){17}}  
\put(700,207){\line(1,0){51}}  
\put(751,207){\line(1,-6){17}}  
\put(706,222){$f_N$}  
\color{black}
\put(751,105){\line(1,6){17}}  
\put(768,207){\line(1,0){17}}  
\put(768,222){$f_{N+1}$}  
\color{black}
\put(227,40){\line(1,0){146}}  
\put(273,5){$I_{i-1}$}  
\put(224,40){\circle{7}}  
\put(377,40){\circle{7}}  
\put(330,50){\line(1,0){146}}  
\put(396,10){$I_i$}  
\put(326,50){\circle{7}}  
\put(479,50){\circle{7}}  
\put(431,40){\line(1,0){146}}  
\put(492,5){$I_{i+1}$}  
\put(428,40){\circle{7}}  
\put(581,40){\circle{7}}  
\put(23,50){\line(1,0){44}}  
\put(18,15){$I_0$}  
\put(20,50){\circle*{7}}  
\put(71,50){\circle{7}}  
\put(737,50){\line(1,0){44}}  
\put(732,15){$I_{N+1}$}  
\put(734,50){\circle{7}}  
\put(785,50){\circle*{7}}  

\end{picture}
\caption{The intervals $I_i$ and functions $f_i$, $\grave g_i$, $\acute g_i$.}\label{FigI}
\end{figure}
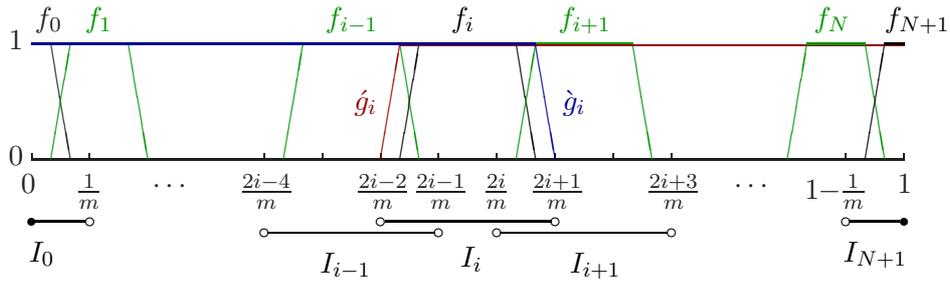

By construction, 
\begin{equation}
\label{eq:fiPOU}
\sum_{i=0}^{N+1} f_i = 1_{C([0,1])}, \end{equation}
and in fact, 
\begin{equation}
\label{eq:fiLocalPOU}
(f_{i-1}+f_i+f_{i+1})|_{I_i} \equiv 1,\quad i=0,\dots,N+1,
\end{equation}
where we define $f_{-1}:=0=:f_{N+2}$ to handle the end point cases.  Also each $\grave g_i$ and $\acute g_i$ acts as a unit on the corresponding $f_i$, i.e.
\begin{align}
f_i&= \grave g_if_i=f_i\grave g_i,\quad i=0,\dots,N,\notag\\
f_i&=\acute g_if_i=f_i\acute g_i,\quad i=1,\dots,N+1.\label{eq:giUnitfi} 
\end{align}
Further, for $i=1,\dots,N$,
\begin{equation}
\label{eq:figiProd}
f_i\grave g_{i-1} \in C_0(\left(\tfrac{{\scriptsize 2i-2}}m,\tfrac{2i-1}m\right))\text{ and }
\acute g_{i+1}f_i \in C_0(\left(\tfrac{2i}m,\tfrac{2i+1}m\right)).
\end{equation}
For $i=1,\dots,N$, write
\begin{align}\label{eq:Falphai2}
\mathcal F_i&:=\{f_i\otimes a \mid a\in\mathcal F_A\}\cup\{f_i\otimes ab \mid a,b\in\mathcal F_A\} \quad \text{and} \\
\mathcal F_i'&:=\{f_i,\grave{g}_i,\acute{g}_i,\grave{g}_{i-1},\acute{g}_{i+1}\};\label{eq:F'alphai2}
\end{align}
slightly abusing notation, by (\ref{DefF}),  (\ref{Deffi}), (\ref{Deffia.1}) and (\ref{Deffia.2}), we have
\begin{equation}\label{eq:Falphai}
 \mathcal F_i = \mathcal F \circ \alpha_i\quad\text{and}\quad \mathcal F_i' = \mathcal F' \circ \alpha_i.
\end{equation}

Use Lemma \ref{MapsExist} to find a unital $^*$-homomorphism $\Theta:C([0,1])\rightarrow\Q_\omega$ and compatible $^*$-homomorphisms $\grave{\Phi}:C_0([0,1),A)\rightarrow \Q_\omega$, $\acute{\Phi}:C_0((0,1],A)\rightarrow\Q_\omega$ satisfying
\begin{align}
\tau_{\Q_\omega}\circ \grave{\Phi}&=\tau_{\leb} \otimes \tau_A \quad \text{and}\label{DefGrave}\\
\tau_{\Q_\omega}\circ \acute{\Phi}&=\tau_{\leb} \otimes \tau_A.\label{DefAcute}
\end{align}
Then define $^*$-homomorphisms $\sigma_0,\dots,\sigma_N$ by
\begin{align}
\sigma_0&:=\grave{\Phi}^{\oplus N}:C_0([0,1),A)\rightarrow \Q_\omega\otimes M_N,\nonumber\\
\sigma_{N}&:=\acute{\Phi}^{\oplus N}:C_0((0,1],A)\rightarrow \Q_\omega \otimes M_N,\text{ and}\nonumber\\
\label{eq:sigmaDef}
\sigma_i&:=\grave\Phi|_{C_0((0,1),A)}^{\oplus(N-i)} \oplus \acute\Phi|_{C_0((0,1),A)}^{\oplus i}: C_0((0,1),A) \to \Q_\omega \otimes M_{N}
\end{align}
for $i=1,\ldots,N-1$. 
Note that while $\sigma_1,\dots,\sigma_{N-1}$ are defined on the suspension $C_0((0,1),A)$, $\sigma_0$ and $\sigma_{N}$ are defined on the cones $C_0([0,1),A)$ and $C_0((0,1],A)$ respectively.

By applying Lemma \ref{lem:CharFunction} to the positive contraction $\Theta(\id_{[0,1]})$, and the intervals $I_0,\dots,I_{N+1}$, there exist projections $q_0,\dots,q_{N+1}\in \Q_\omega$ such that Property \ref{Propertyq} below is satisfied.  For condition (\ref{Propertyq.2}) below, the corresponding property in Lemma \ref{lem:CharFunction} ensures that $q_i$ acts as a unit on $\Theta(C_0(I_i))$, and therefore on the hereditary $\mathrm C^*$-subalgebra generated by $\Theta(C_0(I_i))$, which contains $\grave\Phi(C_0(I_i),A)$ and $\acute\Phi(C_0(I_i),A)$ by compatibility. Condition (\ref{Propertyq.3}) below follows from the corresponding property in Lemma \ref{lem:CharFunction} and (\ref{eq:fiLocalPOU}), while condition (\ref{Propertyq.4}) uses the corresponding property from Lemma \ref{lem:CharFunction} and (\ref{e5.12}).
\begin{property}\label{Propertyq}
\begin{enumerate}[(i)]
\item Each $q_i$ commutes with the image of $\Theta$.\label{Propertyq.1}
\item For each $i=1,\dots,N$, the projection $q_i$ acts as a unit on $\Theta(C_0(I_i))$, $\grave{\Phi}(C_0(I_i,A))$ and $\acute{\Phi}(C_0(I_i,A))$, and hence $q_i^{\oplus N}$ acts as a unit on the image of $\sigma_j|_{C_0(I_i,A)}$ for all $j$.
Likewise, $q_0$ acts as a unit on $\Theta(C_0(I_0))$ and $\grave\Phi(C_0(I_0,A))$, while $q_{N+1}$ acts as a unit on both $\Theta(C_0(I_{N+1}))$ and $\acute\Phi(C_0(I_{N+1},A))$.
\label{Propertyq.2}
\item For each $i=0,\dots,N+1$, $\Theta(f_{i-1}+f_i+f_{i+1})$ acts as a unit on $q_i$.\label{Propertyq.3}
\item $q_iq_j=0$ for $|i-j|>1$.\label{Propertyq.4}
\item
\begin{equation}\label{Proof.Traceq_i}
\tau_{\Q_\omega}(q_i)=|I_i|=\begin{cases}\frac3m,&i=1,\dots,N;\\\frac1m,&i=0\text{ or }i=N+1.\end{cases}
\end{equation}
\end{enumerate}
\end{property}

We will now use the patching lemma of the previous section to find a family of approximately multiplicative, approximately compatible maps $\rho_0,\dots,\rho_{N+1}$ such that $\rho_i$ agrees with $\sigma_{i-1}$ on the left-hand third of $I_i$, i.e., $(\frac{2i-2}{m},\frac{2i-1}{m})$ and agrees with $\sigma_i$ on the right-hand third of $I_i$, namely $(\frac{2i}{m},\frac{2i+1}{m})$. Our precise objective is set out in the following claim.

\begin{claim}
\label{MainClaim}
There exist completely positive maps
\begin{equation}\label{DefRhoi}
\rho_i : C_0(I_i,A) \to q_{i}\Q_\omega q_{i} \otimes M_N \otimes M_2, \quad i=0,\dots,N+1
\end{equation}
such that:
\begin{enumerate}[(i)] 
\item For $i=0,\dots,N+1$, $(\tau_{\Q_\omega} \otimes \tau_{M_{2N}}) \circ \rho_i = \frac12 \tau_{\leb} \otimes \tau_A$. \label{MainClaim.1}
\item As maps into $\Q_\omega \otimes M_N \otimes M_2$, we have
\begin{align}
\label{eq:rho0}
\rho_0&=\sigma_0|_{C_0([0,\frac1m),A)} \oplus 0_N, \\
\label{eq:rhoN+1}
\rho_{N+1}&=\sigma_N|_{C_0((1-\frac1m,1],A)} \oplus 0_N,\\
\label{eq:rhoiRestrict1}
\rho_i|_{C_0((\frac{2i-2}m,\frac{2i-1}m),A)} &= \sigma_{i-1}|_{C_0((\frac{2i-2}m,\frac{2i-1}m),A)} \oplus 0_N, \\
\label{eq:rhoiRestrict2}
\rho_i|_{C_0((\frac{2i}m,\frac{2i+1}m),A)} &= \sigma_{i}|_{C_0((\frac{2i}m,\frac{2i+1}m),A)} \oplus 0_N,\quad i=1,\dots,N.
\end{align}
\item For $i=1,\dots,N$, $x,y \in \mathcal F_i$, and $h \in \mathcal F_i'$,
\begin{align}
\label{eq:rhoMult}
\rho_i(xy) &\approx_\eta \rho_i(x)\rho_i(y), \\
\label{eq:rhoLin}
\rho_i(x)\Theta^{\oplus 2N}(h) &\approx_\eta \rho_i(xh) \approx_\eta \Theta^{\oplus 2N}(h)\rho_i(x),
\end{align}
while $\rho_0$ and $\rho_{N+1}$ are $^*$-homomorphisms compatible with $\Theta^{\oplus 2N}$.
\end{enumerate}
\end{claim}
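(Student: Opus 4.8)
The plan is to define the two end maps $\rho_0,\rho_{N+1}$ by hand and to obtain $\rho_1,\dots,\rho_N$ one at a time from the patching lemma (in the form of Property~\ref{Def-G}), after rescaling $I_i$ to $(0,1)$. The end maps are immediate: put $\rho_0:=\sigma_0|_{C_0(I_0,A)}\oplus 0_N$ and $\rho_{N+1}:=\sigma_N|_{C_0(I_{N+1},A)}\oplus 0_N$. By Property~\ref{Propertyq}\,(\ref{Propertyq.2}) these land in $q_0\Q_\omega q_0\otimes M_N\otimes M_2$ and $q_{N+1}\Q_\omega q_{N+1}\otimes M_N\otimes M_2$ respectively; they are genuine $^*$-homomorphisms compatible with $\Theta^{\oplus 2N}$ because $\grave\Phi,\acute\Phi$ are compatible with $\Theta$ by Lemma~\ref{MapsExist}\,(\ref{MapsExist.1}); and \eqref{DefGrave}--\eqref{DefAcute} give part~(i) for $i\in\{0,N+1\}$ once one accounts for the factor $\tfrac12$ introduced by the $\oplus 0_N$.

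Now fix $i\in\{1,\dots,N\}$ and set $E:=q_i\Q_\omega q_i\otimes M_N$, a copy of $\Q_\omega$ whose unique trace, by \eqref{Proof.Traceq_i}, is $\tau_E=\tfrac m3\,\tau_{\Q_\omega}|_{q_i\Q_\omega q_i}\otimes\tau_{M_N}$. Since $x\circ\alpha_i\in C_0(I_i,A)$ whenever $x\in C_0((0,1),A)$, Property~\ref{Propertyq}\,(\ref{Propertyq.2}) makes it legitimate to define $^*$-homomorphisms $\nu_0,\nu_1\colon C_0((0,1),A)\to E$ by $\nu_0(x):=\sigma_{i-1}(x\circ\alpha_i)$ and $\nu_1(x):=\sigma_i(x\circ\alpha_i)$; for $i=1$ (respectively $i=N$) one takes $\nu_0$ (respectively $\nu_1$) defined on the cone $C_0([0,1),A)$ (respectively $C_0((0,1],A)$), which the patching lemma accommodates since in its proof $\nu_0$ is only evaluated on functions supported in $[0,\tfrac23)$. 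These $\nu_j$ are compatible with the unital $^*$-homomorphism $\theta\colon C([0,1])\to E$, $\theta(h):=(q_i\Theta(h\circ\alpha_i)q_i)^{\oplus N}$ --- a $^*$-homomorphism because $q_i$ commutes with $\Theta$ by Property~\ref{Propertyq}\,(\ref{Propertyq.1}) --- and \eqref{DefGrave}--\eqref{DefAcute} together with the change of variables \eqref{eq:alphaTr} give $\tau_E\circ\nu_0=\tau_E\circ\nu_1=\tau_{\leb}\otimes\tau_A$. So, \emph{granted a unitary $v\in E$ with $v\nu_0(x)v^*\approx_\delta\nu_1(x)$ for $x\in\mathcal G$}, Property~\ref{Def-G} returns a completely positive map $\rho'\colon C_0((0,1),A)\to M_2(E)=q_i\Q_\omega q_i\otimes M_N\otimes M_2$, and I would define $\rho_i$ to be the push-forward of $\rho'$ under the isomorphism $C_0((0,1),A)\cong C_0(I_i,A)$ induced by $\alpha_i$.

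Producing $v$ is the crux, and the step I expect to be hardest. Let $\hat{\grave\Phi}^{(i)},\hat{\acute\Phi}^{(i)}\colon C\to q_i\Q_\omega q_i$ be the unital $^*$-homomorphisms obtained by restricting $x\mapsto\grave\Phi(x\circ\alpha_i)$, respectively $x\mapsto\acute\Phi(x\circ\alpha_i)$, to $C_0((\tfrac13,\tfrac23),A)$ and extending to $C\cong C_0((\tfrac13,\tfrac23),A)^\sim$ by sending $1_C$ to $q_i$; these land in $q_i\Q_\omega q_i$ by compatibility and Property~\ref{Propertyq}\,(\ref{Propertyq.2}), and by \eqref{eq:sigmaDef} the unital extensions to $C$ of $\nu_0|_{C_0((\frac13,\frac23),A)}$ and $\nu_1|_{C_0((\frac13,\frac23),A)}$ are $(\hat{\grave\Phi}^{(i)})^{\oplus(N-i+1)}\oplus(\hat{\acute\Phi}^{(i)})^{\oplus(i-1)}$ and $(\hat{\grave\Phi}^{(i)})^{\oplus(N-i)}\oplus(\hat{\acute\Phi}^{(i)})^{\oplus i}$ into $E$. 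I would invoke Property~\ref{Def-N} with $D=q_i\Q_\omega q_i$, taking $\iota=\phi=\hat{\grave\Phi}^{(i)}$ and $\psi=\hat{\acute\Phi}^{(i)}$ if $i\le n$, and $\iota=\phi=\hat{\acute\Phi}^{(i)}$, $\psi=\hat{\grave\Phi}^{(i)}$ if $i>n$. Its hypotheses are met: maps with domain $C$ are nuclear; $\phi(1_C)=\psi(1_C)=q_i=1_D$; and $\phi_*=\psi_*$ on $\underline K(C)$, since the restrictions of $\hat{\grave\Phi}^{(i)}$ and $\hat{\acute\Phi}^{(i)}$ to $C_0((\tfrac13,\tfrac23),A)$ factor through the contractible cones $C_0([0,1),A)$ and $C_0((0,1],A)$ and hence vanish on $\underline K(C_0((\tfrac13,\tfrac23),A))$, while both send $1_C$ to $[q_i]$ and $\underline K(C)$ is generated by $\underline K(C_0((\tfrac13,\tfrac23),A))$ and the class of the unit. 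For $\Delta$-fullness of $\iota$ I would compute $\tau_{q_i\Q_\omega q_i}(\iota(c))=(\tau_{\leb}\otimes\tau_A)(c)$ for $c\in C^1_+\setminus\{0\}$ --- this is the payoff of working inside the corner $q_i\Q_\omega q_i$, as the value no longer depends on $i$ --- so $\tau_{q_i\Q_\omega q_i}(\iota(c))>2/\sqrt{\Delta(c)}$ by \eqref{eq:DeltaDef}, and, $\sqrt{\Delta(c)}$ being an integer, Lemma~\ref{lem:TrFull} applied to $q_i\Q_\omega q_i$ supplies contractions $b_1,\dots,b_{\Delta(c)}$ with $q_i=\sum_j b_j^*\iota(c)b_j$. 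Property~\ref{Def-N} then produces a unitary $u\in M_{n+1}(q_i\Q_\omega q_i)$ with $u(\hat{\grave\Phi}^{(i)})^{\oplus(n+1)}(c)u^*\approx_\delta(\hat{\acute\Phi}^{(i)}\oplus(\hat{\grave\Phi}^{(i)})^{\oplus n})(c)$ on $\mathcal G$ (case $i\le n$, symmetrically otherwise); since $N=2n$, the extension of $\nu_0|$ to $C$ decomposes, up to a permutation of its $N$ summands, as $(\hat{\grave\Phi}^{(i)})^{\oplus(n+1)}$ together with the common block $(\hat{\grave\Phi}^{(i)})^{\oplus(n-i)}\oplus(\hat{\acute\Phi}^{(i)})^{\oplus(i-1)}$ it shares with the extension of $\nu_1|$, so conjugating $u\oplus 1_{M_{n-1}(q_i\Q_\omega q_i)}$ by two permutation unitaries delivers $v\in M_N(q_i\Q_\omega q_i)=E$ with $v\nu_0(x)v^*\approx_\delta\nu_1(x)$ for $x\in\mathcal G$, as needed.

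Finally I would read off parts~(i)--(iii) for $\rho_i$ from Property~\ref{Def-G}\,(\ref{Def-G.1})--(v) transported through $\alpha_i$, which carries the left and right thirds of $I_i$ to $(0,\tfrac13)$ and $(\tfrac23,1)$ and carries $\mathcal F_i,\mathcal F_i'$ to $\mathcal F,\mathcal F'$ by \eqref{eq:Falphai}: the boundary identities \eqref{eq:rhoiRestrict1}--\eqref{eq:rhoiRestrict2} are \ref{Def-G.1}--(ii); the multiplicativity estimate \eqref{eq:rhoMult} is \ref{Def-G.4}; \eqref{eq:rhoLin} follows from \ref{Def-G.5} once one uses that $q_i$ acts as a unit on the image of $\rho_i$ and commutes with $\Theta$ to pass from $\theta^{\oplus 2}$ to $\Theta^{\oplus 2N}$, together with self-adjointness of $\mathcal F_i$ and centrality of $C([0,1])$ to upgrade the one-sided estimate to the two-sided one; and part~(i) is \ref{Def-G.3} combined with \eqref{eq:alphaTr} and the scalar relating $\tau_E\otimes\mathrm{Tr}_2$ to $\tau_{\Q_\omega}\otimes\tau_{M_{2N}}$ on $M_2(E)$. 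By a wide margin the main obstacle is the third paragraph: fitting the rescaled cone maps into the controlled stable uniqueness theorem --- in particular verifying $\Delta$-fullness through the corner-normalised trace, and carrying out the combinatorial reduction (using $N=2n$) of the $N$-fold unitary equivalence to the $(n+1)$-fold statement that Property~\ref{Def-N} provides.
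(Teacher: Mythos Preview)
Your approach is the same as the paper's, and the overall structure---define $\rho_0,\rho_{N+1}$ directly, then for each $i$ rescale via $\alpha_i$, verify $\Delta$-fullness in the corner, invoke Property~\ref{Def-N}, upgrade to the $N$-fold unitary equivalence by adjoining the common block and permuting, and feed this into Property~\ref{Def-G}---matches the paper step for step. There is, however, one place where your argument is incomplete and the paper is explicitly more careful.

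You assert that $\hat{\grave\Phi}^{(i)}|_{C_0((\frac13,\frac23),A)}$ and $\hat{\acute\Phi}^{(i)}|_{C_0((\frac13,\frac23),A)}$ vanish on $\underline K$ because they ``factor through the contractible cones $C_0([0,1),A)$ and $C_0((0,1],A)$''. But that factorisation is only as maps into $\Q_\omega$, not into $q_i\Q_\omega q_i$: the homotopy contracting $\grave\Phi|_{C_0(I_i,A)}$ to zero lives in $\Q_\omega$ and there is no a priori reason it remains inside the corner. So what you obtain directly is vanishing of the composite into $\underline K(\Q_\omega)$, whereas you need vanishing of the map into $\underline K(q_i\Q_\omega q_i)$. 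The paper flags precisely this point in a footnote and resolves it via Proposition~\ref{prop:Qfacts}\,(\ref{prop:Qfacts.2}): one chooses a (non-unital) embedding $\Q_\omega\hookrightarrow q_i\Q_\omega q_i\otimes M_k$ taking $q_izq_i$ to $q_izq_i\otimes e_{11}$, so the contracting homotopies now take place in $q_i\Q_\omega q_i\otimes M_k$; the unitisations of the resulting maps then agree on $\underline K$, and one subtracts the common scalar summand to get $\phi_*=\psi_*$. Your route can also be completed, more cheaply, by observing that $q_i$ is full in the simple algebra $\Q_\omega$, so the corner inclusion $q_i\Q_\omega q_i\hookrightarrow\Q_\omega$ is a stable isomorphism and hence induces an isomorphism on $\underline K$; vanishing in $\underline K(\Q_\omega)$ then pulls back. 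Either way, this step must be made explicit.

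A minor remark: the parenthetical about extending $\nu_0$ or $\nu_1$ to a cone when $i=1$ or $i=N$ is unnecessary. For every $i\in\{1,\dots,N\}$ and $x\in C_0((0,1),A)$ one has $x\circ\alpha_i\in C_0(I_i,A)\subset C_0((0,1),A)$, and $\sigma_0,\sigma_N$ are already defined there; Property~\ref{Def-G} never asks for more.
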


\begin{proof}[Proof of claim]
The maps $\rho_0$ and $\rho_{N+1}$ are already prescribed by \eqref{eq:rho0} and \eqref{eq:rhoN+1}; since $q_0^{\oplus N}$ and $q_{N+1}^{\oplus N}$ act as units on the images of $\sigma_0|_{C_0(I_0,A)}$ and $\sigma_{N+1}|_{C_0(I_{N+1},A)}$ respectively by Property \ref{Propertyq} (\ref{Propertyq.2}), $\rho_0$ and $\rho_{N+1}$ have the codomains required by  (\ref{DefRhoi}). Further, \eqref{eq:rho0} and \eqref{eq:rhoN+1} ensure that $\rho_0$ and $\rho_{N+1}$ are $^*$-homomorphisms compatible with $\Theta^{\oplus 2N}$ which satisfy (\ref{MainClaim.1}). 

Fix $i\in\{1,\dots,N\}$.
Using the subalgebra $C$ of $C([0,1],A)$ from \eqref{eq:Cdef}, define $\phi,\psi:C \to q_i\Q_\omega q_i$ by
\begin{equation}
\label{eq:phiDef}
 \phi(c) := \lambda q_i + \grave\Phi(x\circ \alpha_i) \quad \text{and}\quad \psi(c):= \lambda q_i + \acute\Phi(x \circ \alpha_i) \end{equation}
for $c=x+\lambda 1_C$ where $x \in C_0((\tfrac13,\tfrac23),A)$. As $x\circ\alpha_i\in C_0(I_i)$, Property \ref{Propertyq} (\ref{Propertyq.2}) shows that $\phi$ and $\psi$ define unital $^*$-homomorphisms.

We will take
\begin{equation}
\iota:= \begin{cases} \phi, \quad & i \leq n; \\ \psi, \quad &i > n \end{cases} \end{equation}
in Property \ref{Def-N} so let us now show that both $\phi$ and $\psi$ are $\Delta$-full maps.

The unique normalised trace $\tau_{q_i\Q_\omega q_i}$ on $q_i\Q_\omega q_i$ is given by 
\begin{equation}
\label{eq:CornerTrace}
\tau_{q_i\Q_\omega q_i}(z)=\frac{1}{\tau_{\Q_\omega}(q_i)}\tau_{\Q_\omega}(z) \stackrel{\eqref{Proof.Traceq_i}}= \frac m3 \tau_{\Q_\omega}(z),\quad z\in q_i\Q_\omega q_i.
\end{equation}
For $c=x+\lambda 1_C \in C_+ \setminus \{0\}$ where $x\in C_0((\tfrac13,\tfrac23),A)$ and $\lambda \in \mathbb C$, we have
\begin{eqnarray}
\tau_{q_i\Q_\omega q_i}(\phi(c)) &\stackrel{\eqref{eq:phiDef}}=& \tau_{q_i\Q_\omega q_i}(\lambda 1_{q_i\Q_\omega q_i} + \grave\Phi(x\circ\alpha_i)) \nonumber\\
&\stackrel{\eqref{eq:CornerTrace}}=& \lambda + \frac m3\tau_{\Q_\omega}(\grave\Phi(x\circ\alpha_i)) \nonumber\\
&\stackrel{\eqref{DefGrave}}{=}& \lambda+\frac{m}{3}(\tau_{\leb} \otimes \tau_A)(x\circ\alpha_i)\nonumber\\
&\stackrel{\eqref{eq:alphaTr}}=& \lambda+(\tau_{\leb} \otimes \tau_A)(x) \nonumber\\
&\stackrel{\eqref{eq:Cdef}}{=}& (\tau_{\leb}\otimes \tau_A)(c) \nonumber\\
&\stackrel{\eqref{eq:DeltaDef}}{>}& \frac{2}{\sqrt{\Delta(c)}}.\label{Proof.TracePhi}
\end{eqnarray}
Using (\ref{DefAcute}) in place of (\ref{DefGrave}), and otherwise computing identically, we also have
\begin{equation}\label{Proof.TracePsi}
\tau_{q_i\Q_\omega q_i}(\psi(c))=(\tau_{\leb}\otimes\tau_A)(c) > \frac2{\sqrt{\Delta(c)}}.
\end{equation}
Entering (\ref{Proof.TracePhi}) and (\ref{Proof.TracePsi}) into Lemma \ref{lem:TrFull} (noting that $\sqrt{\Delta(c)} \in \mathbb N$) shows that $\phi$ and $\psi$ are $\Delta$-full.

Since $\acute\Phi$ is contractible, so is $\acute\Phi|_{C_0(I_i,A)}:C_0(I_i,A) \to \Q_\omega$. By Proposition \ref{prop:Qfacts} (\ref{prop:Qfacts.2}), there exist $k\in\mathbb N$ and a (not necessarily unital) embedding $\Q_\omega\hookrightarrow q_i\Q_\omega q_i\otimes M_k$ which maps $q_izq_i\in \Q_\omega$ to $q_izq_i\otimes e_{11}$.  Therefore, defining $\phi_0,\psi_0:C_0((\frac13,\frac23),A)\rightarrow q_i\Q_\omega q_i\otimes M_k$ by
\begin{align}
\phi_0(x)&:=\grave{\Phi}(x\circ\alpha_i)\otimes e_{11},\quad x\in C_0((\tfrac13,\tfrac23),A),\nonumber\\
\psi_0(x)&:=\acute{\Phi}(x\circ\alpha_i)\otimes e_{11},\quad x\in C_0((\tfrac13,\tfrac23),A),
\end{align}
$\phi_0$ and $\psi_0$ are homotopic.\footnote{The point here is that while $\acute{\Phi}$ and $\grave{\Phi}$ restrict to contractible and hence homotopic maps $C_0(I_i,A)\rightarrow \Q_\omega$, it doesn't simply follow that these restrictions are homotopic within the space of $^*$-homomorphisms $C_0(I_i,A) \to q_i\Q_\omega q_i$.}
Thus the unitisations of $\phi_0$ and $\psi_0$ are homotopic, hence induce the same map on $\underline{K}$, and $\phi_*=\psi_*$.\footnote{The maps $\phi_0$ and $\psi_0$ have unitisations $\phi\oplus\pi^{\oplus(k-1)}$ and $\psi\oplus\pi^{\oplus(k-1)}$ respectively, where $\pi:C\to q_i\Q_\omega q_i$ is the canonical unital $^*$-homomorphism  $C\to q_i\Q_\omega q_i$ with kernel $C_0((\frac13,\frac23),A)$. Thus $\phi_*+(k-1)\pi_*=\psi_*+(k-1)\pi_*$ and subtracting $(k-1)\pi_*$ from both sides gives $\phi_*=\psi_*$.}

By Property \ref{Def-N}, there exists a unitary $u\in M_{n+1}(q_i\Q_\omega q_i)$ such that
\begin{equation}\label{ProofAppDefN}
u(\phi(x) \oplus \iota^{\oplus n}(x))u^* \approx_\delta \psi(x) \oplus \iota^{\oplus n}(x), \quad x\in \mathcal G. \end{equation}
For $i\leq n$, this is
\begin{equation}
u(\phi^{\oplus (n+1)}(x))u^*\approx_\delta\psi(x)\oplus\phi^{\oplus n}(x),\quad x\in\mathcal G.
\end{equation}
Thus working in $q_i\Q_\omega q_i\otimes M_N$, we have
\begin{align}
\lefteqn {(u\phi^{\oplus(n+1)}(x)u^*)\oplus \phi^{\oplus(N-i-n)}(x)\oplus\psi^{\oplus (i-1)}(x)} \nonumber\\
&\approx_\delta  \psi(x)\oplus\phi^{\oplus n}(x)\oplus \phi^{\oplus(N-i-n)}(x)\oplus\psi^{\oplus(i-1)}(x),\quad x\in\mathcal G.\label{Proof.Ver4.1}
\end{align}
Applying appropriate permutation unitaries to both sides of (\ref{Proof.Ver4.1}) we obtain a unitary $v\in q_i\Q_\omega q_i \otimes M_{N}$ with
\begin{equation}\label{Proof.PatchingCondition}
v(\phi^{\oplus(N-i+1)}(x)\oplus\psi^{\oplus(i-1)}(x))v^*\approx_\delta\phi^{\oplus(N-i)}(x)\oplus\psi^{\oplus i}(x),\quad x\in \mathcal G.
\end{equation}
When $i>n$, we have $\iota=\psi$, so that adding $\phi^{\oplus(N-i)}(x)\oplus\psi^{\oplus(i-n-1)}(x)$ to both sides of (\ref{ProofAppDefN}) and applying appropriate permutation unitaries in $q_i\Q_\omega q_i\otimes M_{N}$ likewise gives a unitary $v\in q_i\Q_\omega q_i\otimes M_N$ satisfying (\ref{Proof.PatchingCondition}).

We next wish to use Property \ref{Def-G}, taking $E:=q_{i} \mathcal{Q}_{\omega} q_{i} \otimes M_{N}$. To this end, we define maps 
$\nu_0,\nu_1:C_0((0,1),A) \to q_i\Q_\omega q_i \otimes M_N$
by
\begin{equation}
\label{eq:nuDef}
\nu_j(x):=\sigma_{i+j-1}(x \circ \alpha_i),\quad x\in C_0((0,1),A),\ j=0,1,
\end{equation}
noting that the codomain can indeed be taken to be $q_i\Q_\omega q_i \otimes M_N$ as $\alpha_i$ maps $I_i$ homeomorphically onto $(0,1)$ by \eqref{Proof.DefAlpha}, and $q_i^{\oplus N}$ acts as a unit on the image of $\sigma_{i-1}|_{C_0(I_i,A)}$ and $\sigma_i|_{C_0(I_i,A)}$ by Property \ref{Propertyq} (\ref{Propertyq.2}).
By construction
\begin{align}
\nu_0|_{C_0((\frac13,\frac23),A)}&= (\phi^{\oplus(N-i+1)}\oplus\psi^{\oplus(i-1)})|_{C_0((\frac13,\frac23),A)}, \\
\nu_1|_{C_0((\frac13,\frac23),A)}&= (\phi^{\oplus(N-i)}\oplus\psi^{\oplus i})|_{C_0((\frac13,\frac23),A)}.
\end{align}
Since $\mathcal G \subset C_0((\tfrac13, \tfrac23),A)$, it follows that the unitary $v$ giving \eqref{Proof.PatchingCondition} verifies \eqref{eq:Def-G-v} of Property \ref{Def-G}.

Define $\theta:C([0,1]) \to q_i\Q_\omega q_i \otimes M_N$ by 
\begin{equation}
\label{eq:thetaDef}
 \theta(h) := (q_i\Theta(h \circ \alpha_i)q_i)^{\oplus N}, \quad h \in C([0,1]); \end{equation}
this is a unital $^*$-homomorphism by Property \ref{Propertyq} (\ref{Propertyq.1}).
Since $\acute\Phi,\grave\Phi$ are compatible with $\Theta$, the definitions in \eqref{eq:sigmaDef} and \eqref{eq:nuDef} ensure that $\nu_0$ and $\nu_1$ are compatible with $\theta$.
For $x \in C_0((0,1),A)$ and $j=0,1$, we compute
\begin{align}
\notag
(\tau_{q_i\Q_\omega q_i} \otimes \tau_{M_N})(\nu_j(x))
&\stackrel{\eqref{eq:CornerTrace},\eqref{eq:nuDef}}= \frac m3 (\tau_{\Q_\omega} \otimes\tau_{M_N})(\sigma_{i+j-1}(x\circ\alpha_i)) \\
\notag
&\stackrel{\hphantom{\eqref{eq:CornerTrace},\eqref{eq:nuDef}}}{\stackrel{\eqref{eq:sigmaDef}}=} \frac m{3N} \tau_{\Q_\omega}\big((N-i-j+1)\grave\Phi(x\circ\alpha_i)\\
\notag
&\stackrel{\hphantom{\eqref{eq:CornerTrace},\eqref{eq:nuDef}}}{\stackrel{\hphantom{\eqref{eq:sigmaDef}}}{\hphantom{=}}}\qquad +(i+j-1)\acute\Phi(x\circ\alpha_i)\big) \\
\notag
&\stackrel{\hphantom{\eqref{eq:CornerTrace},\eqref{eq:nuDef}}}{\stackrel{\eqref{DefGrave},\eqref{DefAcute}}=} \frac m{3N}\, N(\tau_{\leb}\otimes \tau_A)(x \circ \alpha_i) \\
&\stackrel{\hphantom{\eqref{eq:CornerTrace},\eqref{eq:nuDef}}}{\stackrel{\eqref{eq:alphaTr}}= }(\tau_{\leb}\otimes \tau_A)(x).
\end{align}
This completes the verification of the hypotheses in Property \ref{Def-G}, and so we obtain a completely positive map 
\begin{equation}\label{e5.54}
\rho:C_0((0,1),A)\rightarrow q_i\Q_\omega q_i \otimes M_{N} \otimes M_{2} =E\otimes M_2
\end{equation} 
satisfying (\ref{Def-G.1})-(\ref{Def-G.5}) of Property \ref{Def-G}.
Define a completely positive map $\rho_i:C_0(I_i,A) \to q_i\Q_\omega q_i \otimes M_N \otimes M_2$ by
\begin{equation}\label{e5.55} \rho_i(x) := \rho(x \circ \alpha_i|_{I_i}^{-1}),\quad x\in C_0(I_i,A),\end{equation}
(which makes sense as $\alpha_i|_{I_i}:I_i \to (0,1)$ is a homeomorphism).

To end the proof of the claim, we link Property \ref{Def-G} to the conditions of Claim \ref{MainClaim}.
Property \ref{Def-G} (\ref{Def-G.3}), \eqref{eq:alphaTr}, and \eqref{eq:CornerTrace} combine to give Claim \ref{MainClaim} (\ref{MainClaim.1}), while \eqref{eq:nuDef}, Property \ref{Def-G} (\ref{Def-G.1}) and (\ref{Def-G.2}) give  \eqref{eq:rhoiRestrict1} and \eqref{eq:rhoiRestrict2} respectively.
Using \eqref{eq:Falphai}, \eqref{eq:nuDef}, and \eqref{eq:thetaDef}, the estimates for $\rho$ in Properties \ref{Def-G} (\ref{Def-G.4}), (\ref{Def-G.5}) in terms of $\mathcal F$ and $\mathcal F'$ are transformed to give the required estimates \eqref{eq:rhoMult}, \eqref{eq:rhoLin} for the sets $\mathcal F_i$ and $\mathcal F_i'$ (for \eqref{eq:rhoLin} also using the fact that $\mathcal F_A$ consists of self-adjoints so that $\mathcal F$ and $\mathcal F_i$ are closed under taking adjoints). This concludes the proof of the claim.
\end{proof}

We now define the completely positive map 
\begin{equation}
\Psi:A \to \Q_\omega \otimes M_N \otimes M_2
\end{equation} 
to be used to witness quasidiagonality of $\tau_A$ (via \eqref{Objective.1} and \eqref{Objective.2}) by
\begin{equation}\label{e5.57}
\Psi(a) := \sum_{i=0}^{N+1} \rho_i(f_i \otimes a),\quad a\in A.
\end{equation}
Note that $\rho_i(f_i \otimes a)$ makes sense since $f_i \in C_0(I_i)$.
Condition (\ref{MainClaim.1}) of Claim \ref{MainClaim} gives $(\tau_{Q_\omega} \otimes \tau_{M_{2N}})(\rho_i(f_i\otimes a))=\tfrac12\tau_{\leb}(f_i)\tau_A(a)$, for each $i \in \{0,\ldots,N+1\}$ and $a\in A$.
Then, as the $f_i$ form a partition of unity for $C([0,1])$ by \eqref{eq:fiPOU}, 
\begin{align}
(\tau_{\Q_\omega} \otimes \tau_{M_{2N}})(\Psi(a)) =& \sum_{i=0}^{N+1} (\tau_{\Q_\omega} \otimes \tau_{M_{2N}}) \rho_i(f_i \otimes a) \notag \\
\notag
=& \frac12 \sum_{i=0}^{N+1} \tau_{\leb}(f_i)\tau_A(a) \\
=& \frac12 \tau_A(a),\quad a\in A,
\end{align}
establishing (\ref{Objective.2}).

We complete the proof by showing (\ref{Objective.1}), so fix $a,b\in \mathcal F_A$. By Property \ref{Propertyq} (\ref{Propertyq.4})  $q_iq_j=0$ when $|i-j|>1$, and thus the images of $\rho_i$ and $\rho_j$ are orthogonal for $|i-j|>1$. For notational simplicity, define both $\rho_{-1},\rho_{N+2}$ to be the zero map and recall the similar convention $f_{-1} = 0 = f_{N+2}$. Then we have
\begin{align}
\Psi(a)\Psi(b) &=  \sum_{i,j=0}^{N+1}\rho_i(f_i \otimes a)\rho_j(f_j \otimes b)\nonumber\\
&=  \sum_{k=-1}^1\sum_{i=0}^{N+1} \rho_i(f_i \otimes a)\rho_{i+k}(f_{i+k} \otimes b).
\end{align}
By Property \ref{Propertyq} (\ref{Propertyq.3}) and (\ref{DefRhoi}), we also have
\begin{equation}
\Psi(ab)=\sum_{k=-1}^1\sum_{i=0}^{N+1}\rho_i(f_i\otimes ab)\Theta^{\oplus 2N}(f_{i+k}).
\end{equation}
In this way, we can write
\begin{align}
\notag
&\hspace*{-2em}\Psi(a)\Psi(b)-\Psi(ab) \\
\notag
=&\sum_{k=-1}^1\sum_{\substack{i=0\\i\text{ odd}}}^{N+1} \rho_i(f_i \otimes a)\rho_{i+k}(f_{i+k} \otimes b) - \rho_i(f_i \otimes ab)\Theta^{\oplus 2N}(f_{i+k})\\
\label{eq:OSum2}&\quad+ \sum_{k=-1}^1\sum_{\substack{i=0\\i\text{ even}}}^{N+1} \rho_i(f_i \otimes a)\rho_{i+k}(f_{i+k} \otimes b) - \rho_i(f_i \otimes ab)\Theta^{\oplus 2N}(f_{i+k}).
\end{align}
The decomposition into odd and even $i$ in the sums in \eqref{eq:OSum2} gives the estimate
\begin{align}
\lefteqn{\|\Psi(a)\Psi(b)-\Psi(ab)\|}\nonumber\\
& \leq\ 2\sum_{k=-1}^1\max_i\|\rho_i(f_i \otimes a)\rho_{i+k}(f_{i+k} \otimes b) - \rho_i(f_i \otimes ab)\Theta^{\oplus 2N}(f_{i+k})\|.\label{eq:OGDecomp}
\end{align}
This follows as, for $k$ fixed, writing 
\begin{equation}
T_{i,k}:=\rho_i(f_i \otimes a)\rho_{i+k}(f_{i+k} \otimes b) - \rho_i(f_i \otimes ab)\Theta^{\oplus 2N}(f_{i+k}),
\end{equation}
we have $T_{i,k}=q_i^{\oplus 2N}T_{i,k}q_{i+k}^{\oplus 2N}$ (recall that $q_{i+k}$ acts as a unit on $\Theta(f_{i+k})$).  As $(q_i)_{i\text{ even}}$ are pairwise orthogonal projections and $(q_{i+k})_{i\text{ even}}$ are also pairwise orthogonal projections\footnote{Though not necessarily pairwise orthogonal to $(q_i)_{i\text{ even}}$.} one has $\|\sum_{i\text{ even}}T_{i,k}\|=\max\{\|T_{i,k}\|\mid i\text{ even}\}$. Likewise $\|\sum_{i\text{ odd}}T_{i,k}\|=\max\{\|T_{i,k}\|\mid i\text{ odd}\}$, giving \eqref{eq:OGDecomp}.

We now estimate the terms in \eqref{eq:OGDecomp}, starting with the case $k=0$.  For $i=1,\dots,N$ we have  $f_i\otimes a,f_i\otimes b,f_i\otimes ab\in \mathcal F_i$ by (\ref{eq:Falphai2}) and $f_i\in\mathcal F_i'$ by (\ref{eq:F'alphai2}), so that
\begin{align}
\rho_i(f_i\otimes a)\rho_i(f_i\otimes b) 
&\stackrel{\eqref{eq:rhoMult}\hphantom{_\eta}}{\approx_\eta} \rho_i(f_i^2 \otimes ab) \\
\notag
&\stackrel{\eqref{eq:rhoLin}\hphantom{_\eta}}{\approx_\eta} \rho_i(f_i \otimes ab)\Theta^{\oplus 2N}(f_i).
\end{align}
For $i=0$ or $i=N+1$, one has $\rho_i(f_i\otimes a)\rho_i(f_i\otimes b)=\rho_i(f_i\otimes ab)\Theta^{\oplus 2N}(f_i)$, 
as $\rho_0$ and $\rho_{N+1}$ are homomorphisms compatible with $\Theta^{\oplus 2N}$.
Thus the $k=0$ contribution to \eqref{eq:OGDecomp} is at most $4\eta$.

Now fix $k=\pm 1$, and $i=0,\dots,N+1$ such that $i+k\in\{0,\dots,N+1\}$.  Set 
\begin{equation}
\tilde{g}_i:=\begin{cases}\acute{g}_i,&k=-1;\\\grave{g}_i,&k=+1,\end{cases}\quad\text{and}\quad \tilde{g}_{i+k}:=\begin{cases}\grave{g}_{i+k},&k=-1;\\\acute{g}_{i+k},&k=+1.\end{cases}
\end{equation}
In this way, \eqref{eq:giUnitfi} (see Figure \ref{FigI}) gives 
\begin{equation}\label{e5.67}
\tilde{g}_if_i=f_i=f_i\tilde{g}_i\quad \text{and} \quad \tilde{g}_{i+k}f_{i+k}=f_{i+k}=f_{i+k}\tilde{g}_{i+k},
\end{equation}
so that as $\tilde{g}_i,\tilde{g}_{i+k}\in\mathcal F_i'$ and $\tilde{g}_i,\tilde{g}_{i+k}\in\mathcal F_{i+k}'$ (see \eqref{eq:F'alphai2}), we have\footnote{When one of $i$ or $i+k$ is $0$ or $N+1$, exact compatibility of $\rho_0$ and $\rho_{N+1}$ can be used in place of \eqref{eq:rhoLin}, leading to better estimates.}
\begin{align}
\lefteqn{ \rho_i(f_i \otimes a)\rho_{i+k}(f_{i+k} \otimes b)}\nonumber \\
&\stackrel{\eqref{e5.67}\;\;\;}{=_{\phantom{2\eta}}} \rho_i(f_i\tilde{g}_{i}\otimes a)\rho_{i+k}(\tilde{g}_{i+k}f_{i+k}\otimes b)\nonumber\\
&\stackrel{\eqref{eq:rhoLin}\;\;\;}{\approx_{2\eta}}
\rho_i(f_i \otimes a)\Theta^{\oplus 2N}(\tilde g_i \tilde g_{i+k})\rho_{i+k}(f_{i+k} \otimes b) \nonumber\\
&\stackrel{\eqref{eq:rhoLin}\;\;\;}{\approx_{2\eta}}
\rho_i(f_i\tilde g_{i+k} \otimes a)\rho_{i+k}(\tilde g_i f_{i+k} \otimes b).\label{eNew5.1}
\end{align}
Set
\begin{equation}
\tilde{\sigma}_i:=\begin{cases}\sigma_{i-1},&k=-1;\\\sigma_i,&k=+1.\end{cases}
\end{equation}
Then, using \eqref{eq:figiProd}, \eqref{eq:rhoiRestrict1} and \eqref{eq:rhoiRestrict2} for the first and last equalities below, we have
\begin{align}
\lefteqn{\rho_i(f_i\tilde g_{i+k} \otimes a)\rho_{i+k}(\tilde g_i f_{i+k} \otimes b)}\nonumber\\
&\stackrel{\hphantom{\text{Compatibility},\ \eqref{e5.67}}}{=}\big(\tilde{\sigma}_i(f_i\tilde{g}_{i+k}\otimes a)\oplus 0_N\big)\big(\tilde{\sigma}_i(\tilde{g}_if_{i+k}\otimes b)\oplus 0_N\big)\nonumber\\
&\stackrel{\text{Compatibility},\ \eqref{e5.67}}{=}\big(\tilde{\sigma}_{i}(f_{i}\tilde{g}_{i+k}\otimes ab)\oplus 0_N\big)\Theta^{\oplus 2N}(f_{i+k})\nonumber\\
&\stackrel{\hphantom{\text{Compatibility},\ \eqref{e5.67}}}{=}\rho_i(f_i\tilde{g}_{i+k}\otimes ab)\Theta^{\oplus 2N}(f_{i+k}).\label{eNew5.2}
\end{align}
Finally we use that $\tilde{g}_{i+k}\in\mathcal F_i'$ again\footnote{When $i=0$ or $i=N+1$, one has equality in the next estimate.} to see that
\begin{align}
\lefteqn{\rho_i(f_i\tilde{g}_{i+k}\otimes ab)\Theta^{\oplus 2N}(f_{i+k})}\nonumber\\
&\stackrel{\eqref{eq:rhoLin}\;}{\approx_{\eta}}\rho_i(f_i\otimes ab)\Theta^{\oplus 2N}(f_{i+k}\tilde{g}_{i+k})\nonumber\\
&\stackrel{\eqref{e5.67}\;}{=_{\hphantom{\eta}}}\rho_i(f_i\otimes ab)\Theta^{\oplus 2N}(f_{i+k}).\label{eNew5.3}
\end{align}

Combining \eqref{eNew5.1}, \eqref{eNew5.2} and \eqref{eNew5.3} gives
\begin{equation}
\rho_i(f_i\otimes a)\rho_{i+k}(f_{i+k}\otimes b)\approx_{5\eta}\rho_i(f_i\otimes ab)\Theta^{\oplus 2N}(f_{i+k}),
\end{equation}
so that the $k=\pm 1$ terms in \eqref{eq:OGDecomp} contribute at most $10\eta$ each. Putting these estimates together with the $k=0$ case, (\ref{eq:OGDecomp}) gives
\begin{equation} \|\Psi(ab)-\Psi(a)\Psi(b)\| \leq 24\eta \stackrel{(\ref{DefEta})}{<} \varepsilon,\quad a,b\in\mathcal F_A, \end{equation}
establishing (\ref{Objective.2}) and hence completing the proof of Theorem \ref{thm:MainThm}. \qed

\begin{remark}
It is worth noting that in the argument above we can replace $\Q_\omega$ by the ultrapower of the CAR algebra\footnote{Or any other ultraproduct  of UHF algebras, since these are admissible target algebras of finite type.} $M_{2^\infty}$ to produce  approximately multiplicative maps $\Psi:A\rightarrow (M_{2^\infty})_\omega\otimes M_{2N}$ which realise $\frac{1}{2}\tau_A$.   In the proof, we can also arrange for $N$ to be of the form $2^k$, so that $(M_{2^\infty})_\omega\otimes M_{2N}\cong (M_{2^\infty})_\omega$, whence reindexing will yield a $^*$-homomorphism $\Phi':A\rightarrow (M_{2^\infty})_\omega$ realising $\frac{1}{2}\tau_A$.  This will induce (as $\tau_A$ is faithful) a unital embedding of $A$ into $\Theta(1_A)(M_{2^\infty})_\omega \Theta(1_A)$, which is isomorphic to an ultraproduct of UHF algebras. Note, however,  that this does not necessarily give a unital embedding of $A$ into $(M_{2^\infty})_\omega$. Indeed, the unit of $A$ might be exactly $3$-divisible in $K_0$, whence no such embedding exists. 
\end{remark}

\begin{remark}
We need to assume that $\tau_A$ is faithful since -- unlike nuclearity -- the property of satisfying the UCT does not in general pass to quotients.\footnote{Every $\mathrm{C}^*$-algebra is the continuous image of its cone, which is homotopic to $0$, so satisfies the UCT, and there exist non-nuclear $\mathrm{C}^*$-algebras which do not satisfy the UCT \cite{S:KThy}.} However, we will see in Corollary \ref{CorBV2} below that in the situation of the theorem (when $A$ has a faithful trace to begin with), all traces, faithful or not, are quasidiagonal.
\end{remark}

In order to facilitate a generalisation by Gabe of Theorem \ref{thm:MainThm} to amenable traces on exact $\mathrm{C}^{*}$-algebras, we note that the above proof establishes the following statement.\footnote{Gabe's article was under preparation when this paper was submitted; it is now available as \cite{G:arXiv}.}
\begin{lemma}\label{lem:gen}
Let $A$ be a separable unital exact $\mathrm{C}^*$-algebra in the UCT class and let $\tau_A$ be a faithful trace on $A$. Suppose there are  $^*$-homomorphisms $\Theta:C([0,1])\rightarrow\Q_\omega$, $\grave{\Phi}:C_0([0,1),A)\rightarrow \Q_\omega$ and $\acute{\Phi}:C_0((0,1],A)\rightarrow\Q_\omega$ such that $\Theta$ is unital, $\grave{\Phi}$ and $\acute{\Phi}$ are nuclear and compatible with $\Theta$, 
\begin{equation}
\tau_{\Q_\omega}\circ \grave{\Phi}=\tau_{\leb} \otimes \tau_A, \quad \text{and} \quad \tau_{\Q_\omega}\circ \acute{\Phi}=\tau_{\leb} \otimes \tau_A.
\end{equation}
Then for any finite subset $\mathcal F_A$ of $A$ and $\varepsilon>0$, there exist $N\in\mathbb N$ and a nuclear c.p.\ map $\Psi:A\rightarrow \Q_\omega\otimes M_{2N}$ such that \eqref{Objective.1} and \eqref{Objective.2} hold.
\end{lemma}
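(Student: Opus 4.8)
The plan is to observe that the statement of Lemma~\ref{lem:gen} is essentially a summary of what the proof of Theorem~\ref{thm:MainThm} actually establishes, with the only novelty being the tracking of nuclearity and exactness hypotheses. So I would structure the proof as a careful audit of the preceding argument, indicating at each stage why nuclearity of $A$ was never genuinely needed, only exactness of $A$ together with nuclearity of the two cone maps $\grave\Phi$ and $\acute\Phi$.

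First I would note that the proof of Theorem~\ref{thm:MainThm} begins by invoking Lemma~\ref{MapsExist} to produce exactly the data $\Theta,\grave\Phi,\acute\Phi$ now assumed as hypotheses; here we simply take the given maps instead. The next ingredient is the stable uniqueness theorem, Theorem~\ref{thm:StableUniqueness}, applied to $C\cong C_0((\tfrac13,\tfrac23),A)^\sim$. That theorem requires $C$ to be separable, unital, exact, and in the UCT class: separability and unitality are clear, exactness of $C$ follows from exactness of $A$ (suspensions and unitisations preserve exactness), and $C$ satisfies the UCT because $A$ does (as recorded after Definition~\ref{UCT:def}, the UCT passes to suspensions and unitisations). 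The hypothesis in Theorem~\ref{thm:StableUniqueness} that the relevant $^*$-homomorphisms $\phi,\psi$ be nuclear is met because, in the application, $\phi$ and $\psi$ are built (via \eqref{eq:phiDef}) from restrictions of $\grave\Phi$ and $\acute\Phi$, which are nuclear by hypothesis --- this is precisely where nuclearity of the cone maps, rather than of $A$, is used. Since $\Q_\omega$ is an admissible target algebra of finite type, Property~\ref{Def-N} follows as before.

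Next I would run through Section~\ref{Sect.Patch}: the patching lemma, Lemma~\ref{lem:Patching}, and Property~\ref{Def-G} derived from it, make no use of nuclearity of $A$ at all --- they are purely about c.p.\ maps and commutator estimates --- so they apply verbatim. Lemma~\ref{lem:CharFunction} (yielding the projections $q_i$) and Lemma~\ref{lem:TrFull} (used for $\Delta$-fullness of $\phi,\psi$) are likewise unaffected; the faithfulness of $\tau_{\leb}\otimes\tau_A$, needed to define the control function $\Delta$ via \eqref{eq:DeltaDef}, follows from faithfulness of $\tau_A$ exactly as in the main proof (via Kirchberg's slice lemma). The construction of the maps $\sigma_i$ in \eqref{eq:sigmaDef} and the verification of Claim~\ref{MainClaim} then go through word for word, with $\rho_0$ and $\rho_{N+1}$ being $^*$-homomorphisms and $\rho_1,\dots,\rho_N$ being approximately multiplicative c.p.\ maps produced by Property~\ref{Def-G}. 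Finally, $\Psi$ is defined by \eqref{e5.57}, and the trace computation establishing \eqref{Objective.2} and the commutator bookkeeping establishing \eqref{Objective.1} are identical. The one extra point to record is that $\Psi$ is nuclear: each $\rho_i(f_i\otimes\,\cdot\,)$ factors through the nuclear maps $\grave\Phi$, $\acute\Phi$ (composed with c.p.\ maps and the patching construction, which preserve nuclearity), so the finite sum $\Psi$ is nuclear.

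The main obstacle --- really the only nontrivial point --- is being precise about where each appeal to nuclearity in the original argument actually lands, and checking that in every such place it is either nuclearity of $\grave\Phi,\acute\Phi$ (now hypothesised) or a consequence of exactness of $A$ (e.g.\ in Theorem~\ref{thm:StableUniqueness}, whose proof uses \cite[Proposition 3.3]{D:JFA97} to pass nuclearity of the $\phi_n,\psi_n$ to the sequence algebra, and which requires exactness of the domain $C$). Once that dictionary is in place, there is nothing further to prove: the argument is a transcription of Section~\ref{Section:Proof} with ``$A$ nuclear'' replaced throughout by ``$A$ exact and $\grave\Phi,\acute\Phi$ nuclear'', and with the resulting $\Psi$ noted to be nuclear.
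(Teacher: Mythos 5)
Your proposal is correct and takes essentially the same approach as the paper's brief explanation following the statement: auditing the proof of Theorem~\ref{thm:MainThm} step by step, replacing the appeal to Lemma~\ref{MapsExist} with the hypothesised maps, invoking Theorem~\ref{thm:StableUniqueness} for the exact domain $C$ with $\phi,\psi$ nuclear because they are built from $\grave\Phi,\acute\Phi$, and noting that the patching construction transmits nuclearity from $\nu_0,\nu_1$ to $\rho$ and hence to $\Psi$. The paper's write-up is more compressed but identical in substance.
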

To see this, one works through the proof of Theorem \ref{thm:MainThm}, checking that the hypothesis of Lemma \ref{lem:gen} enables all maps  in the proof be taken to be nuclear.  Property \ref{Def-G} is obtained from the patching lemma (Lemma \ref{lem:Patching}). By construction the $\rho$ given by Lemma \ref{lem:Patching} is defined explicitly in terms of $\nu_0$, $\nu_1$ and a unitary $u$ in \eqref{Patch.DefRho1}, \eqref{Patch.DefRho2} and \eqref{Patch.E20}, and so is nuclear when $\nu_0$ and $\nu_1$ are.   The controlled stable uniqueness theorem (Theorem \ref{thm:StableUniqueness}) is valid for separable unital exact domains in the UCT class and nuclear $^*$-homomorphisms, so Property \ref{Def-N} holds with $\phi$ and $\psi$ additionally assumed nuclear.  With these adjustments to Properties \ref{Def-G} and \ref{Def-N}, the maps $\rho_0,\dots,\rho_{N+1}$ produced in Claim \ref{MainClaim} are all nuclear. The definition of $\Psi$ in terms of these $\rho_i$ from \eqref{e5.57} ensures that it too is nuclear.

\section{Consequences: Structure and classification}\label{Structure}

\noindent
In this section we shall explain how Corollaries \ref{cor:QD}-\ref{monotracial-classification} follow from Theorem~\ref{thm:MainThm}, and put into context the main result and its consequences for the structure and classification of simple nuclear $\mathrm C^*$-algebras.

\bigskip

We begin with Corollary \ref{cor:QD}. The version below includes an additional statement pointed out to us by Nate Brown; it implies that if there is a faithful (hence quasidiagonal) trace to begin with, then in fact all  traces are quasidiagonal. 

\begin{cor}\label{CorBV2}
Every separable, nuclear $\mathrm{C}^{*}$-algebra in the UCT class with a faithful trace is quasidiagonal. In particular, each simple, separable, stably finite, nuclear $\mathrm{C}^*$-algebra satisfying the UCT is quasidiagonal. 

Further if $A$ is a separable, unital, nuclear and quasidiagonal $\mathrm{C}^*$-algebra satisfying the UCT, then all traces on $A$ are quasidiagonal.
\end{cor}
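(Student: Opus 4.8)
The plan is to derive the final statement from Theorem~\ref{thm:MainThm} together with the first part of the corollary, by producing, from a quasidiagonal $A$, a \emph{faithful} trace on a companion algebra to which Theorem~\ref{thm:MainThm} applies. Concretely, suppose $A$ is separable, unital, nuclear, quasidiagonal and satisfies the UCT, and let $\tau_A \in T(A)$ be an arbitrary (possibly non-faithful) trace. First I would use quasidiagonality of $A$: by Proposition~\ref{Prop.EasyQDT}~(\ref{EasyQD}) there is a unital embedding $\kappa:A \hookrightarrow \Q_\omega$. The idea is then to build a new trace on $A$ which dominates both $\tau_A$ and a faithful trace, so that faithfulness is available while retaining control of $\tau_A$.

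The key step is the following. Since $A$ is separable, nuclear and quasidiagonal, it has a faithful trace: indeed, compose the unital embedding $A\hookrightarrow\Q_\omega$ with $\tau_{\Q_\omega}$ to get a trace $\tau_0$ on $A$; here $\tau_0$ need not be faithful, but running the Choi--Effros lifting and truncating as in the proof of Proposition~\ref{Prop.EasyQDT}~(\ref{EasyQD}) produces, for a fixed dense sequence $(a_k)$ in $A$, u.c.p.\ maps $\psi_n:A\to M_{k_n}$ that are asymptotically multiplicative and asymptotically isometric; then $\tau_1:=\sum_k 2^{-k}\,\tau_{M_{k_{n(k)}}}\!\circ\psi_{n(k)}$ for a suitable subsequence is a \emph{faithful} trace on $A$ (faithfulness because the maps are asymptotically isometric, so $\tau_1(a^*a)=0$ forces $\|a\|=0$). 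Now set $\tau := \tfrac12(\tau_A + \tau_1) \in T(A)$, which is faithful since $\tau_1$ is. By Theorem~\ref{thm:MainThm}, $\tau$ is quasidiagonal, so by Proposition~\ref{Prop.EasyQDT}~(\ref{EasyQDT}) there is a unital $^*$-homomorphism $\theta:A\to\Q_\omega$ with $\tau_{\Q_\omega}\circ\theta=\tau$. It remains to extract a quasidiagonal approximation for $\tau_A$ itself.

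For that, I would observe that $\tau_A \le 2\tau$, and realise $\tau_A$ as a "corner" of $\tau$ inside $\Q_\omega$. Since $\theta$ is unital with $\tau_{\Q_\omega}\circ\theta=\tau$, consider the von Neumann algebra $M:=\pi_\tau(A)''$ with its canonical trace; $\tau_A$ is a positive trace on $A$ with $\tau_A\le 2\tau$, so it extends to a normal positive tracial functional on $M$, given by $x\mapsto \tau_M(xz)$ for a central positive contraction $z\in Z(M)$ (Radon--Nikodym for traces). The map $A\to\Q_\omega$, together with the inclusion $\pi_\tau(A)''\hookrightarrow$ a finite von Neumann algebra, lets one find inside $\Q_\omega$ (using $q\Q_\omega q\cong\Q_\omega$ and the uniqueness of the trace, Proposition~\ref{prop:Qfacts}) a unital $^*$-homomorphism $\theta':A\to\Q_\omega$ with $\tau_{\Q_\omega}\circ\theta'=\tau_A$ whenever $\tau_A\ne 0$; if $\tau_A=0$ there is nothing to prove. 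Concretely: the weak closure argument from the proof of Proposition~\ref{SWW-Trace} (the extremal case, via uniqueness of the hyperfinite II$_1$ factor and the reindexing $\varepsilon$-test) applies to $\tau_A$ directly once we know $A$ embeds unitally in $\Q_\omega$ — the embedding supplies the needed c.p.c.\ order zero (indeed $^*$-homomorphic) lifts to compress into corners of prescribed trace. Then Proposition~\ref{Prop.EasyQDT}~(\ref{EasyQDT}) gives that $\tau_A$ is quasidiagonal.

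The main obstacle I anticipate is the last step: passing from "$\tau_A$ is dominated by a quasidiagonal (faithful) trace $\tau$, and $A$ embeds unitally in $\Q_\omega$" to "$\tau_A$ itself is a corner of some unital $^*$-homomorphism $A\to\Q_\omega$". The clean way is to redo the extremal-trace argument of Proposition~\ref{SWW-Trace}/Lemma~\ref{MapsExist}: write $\tau_A=\sum_j\lambda_j\sigma_j$ as a weak$^*$-limit of convex combinations of extremal traces on $A$; for each extremal $\sigma_j$ with $\pi_{\sigma_j}(A)''$ hyperfinite II$_1$ one gets a trace-preserving embedding into a corner of $\Q_\omega$ exactly as there, and for type I factor quotients one uses that a unital embedding of $A$ into $\Q_\omega$ restricts appropriately — but here quasidiagonality of $A$ is precisely what furnishes the ambient unital embedding needed to run Kirchberg's $\varepsilon$-test and assemble the pieces, circumventing the "no finite-dimensional quotients" issue. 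Thus the structure is: (1) manufacture a faithful trace $\tau_1$ on $A$ from quasidiagonality; (2) apply Theorem~\ref{thm:MainThm} to $\tfrac12(\tau_A+\tau_1)$; (3) use the resulting embedding plus the uniqueness of the trace on $\Q_\omega$ and on its corners to carve out a unital $^*$-homomorphism inducing $\tau_A$ exactly; (4) invoke Proposition~\ref{Prop.EasyQDT}~(\ref{EasyQDT}). Steps (1), (2), (4) are routine given the machinery already in the paper; step (3) is where care is required.
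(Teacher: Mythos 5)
Your step~(1) is flawed in two ways, and the second makes the overall strategy unworkable. First, the functional $\tau_1 := \sum_k 2^{-k}\,\tau_{M_{k_{n(k)}}}\circ\psi_{n(k)}$ is a state, not a trace: the $\psi_{n(k)}$ are only approximately multiplicative, so each $\tau_{M_{k_{n(k)}}}\circ\psi_{n(k)}$ is only approximately tracial, and a countable convex combination of approximately tracial states need not be tracial (while passing to a weak$^*$-cluster point would sacrifice the faithfulness you need). Second, and more fundamentally, a separable, unital, nuclear, quasidiagonal $\mathrm{C}^*$-algebra in the UCT class need not admit \emph{any} faithful trace. For example, the unitisation $C_0((0,1],\mathcal{O}_2)^\sim$ is separable, unital, nuclear and quasidiagonal (the cone is contractible and quasidiagonality is a homotopy invariant \cite{Voi:Duke}), and is in the UCT class; but since $\mathcal{O}_2$ has no tracial functionals, the only tracial state on $C_0((0,1],\mathcal{O}_2)^\sim$ is $\lambda 1 + a \mapsto \lambda$, which vanishes on the ideal $C_0((0,1],\mathcal{O}_2)$ and so is not faithful. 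Consequently the trace $\tau=\tfrac12(\tau_A+\tau_1)$ to which you wish to apply Theorem~\ref{thm:MainThm} cannot in general be built.

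Your step~(3) is also not substantiated: Proposition~\ref{SWW-Trace} already applies to an arbitrary, possibly non-faithful, trace but returns only a c.p.c.\ order zero map; upgrading that to a $^*$-homomorphism is exactly the content of Theorem~\ref{thm:MainThm}, whose proof requires faithfulness in order to establish $\Delta$-fullness, and the ambient unital embedding $A\hookrightarrow\Q_\omega$ does not by itself produce the required trace-realising $^*$-homomorphism (the Radon--Nikodym element $z\in Z(\pi_\tau(A)'')$ is a positive contraction, not a projection, so one cannot simply compress to a corner of $\Q_\omega$). The paper's own proof of the last sentence of the corollary proceeds quite differently: it notes that the class $\mathcal N$ of separable nuclear $\mathrm{C}^*$-algebras satisfying the UCT contains $\mathbb{C}$ and is closed under countable inductive limits with injective connecting maps, tensoring by matrix algebras, and extensions, and that Theorem~\ref{thm:MainThm} handles all traces on simple members of $\mathcal N$ (which are automatically faithful); then an abstract bootstrapping result of N.~Brown, \cite[Lemma 6.1.20 (3) $\Rightarrow$ (1)]{B:MAMS}, converts this into the assertion that every quasidiagonal member of $\mathcal N$ has all of its amenable (hence, by nuclearity, all of its) traces quasidiagonal. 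This sidesteps the faithful-trace issue entirely by working at the level of a class with good closure properties rather than manipulating traces on a single algebra.
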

\begin{proof}
The first sentence is an immediate consequence of Theorem \ref{thm:MainThm} and the fact that a $\mathrm{C}^*$-algebra with a faithful quasidiagonal trace is quasidiagonal (see Proposition \ref{Prop.EasyQDT} and Remark \ref{QDNonUnital} (\ref{QDNonUnital3})). 

For the second sentence, let $A$ be a separable, simple, nuclear, stably finite $\mathrm C^*$-algebra that satisfies the UCT.
It suffices to show that there is a $\mathrm C^*$-algebra stably isomorphic to $A$ which has a trace (as such a trace is automatically faithful by simplicity). If $A\otimes \mathcal K$ contains a nonzero projection $p$, then the hereditary $\mathrm C^*$-subalgebra $p(A\otimes\mathcal K)p$ is stably isomorphic to $A$ by L.~Brown's Theorem (\cite{B:PJM}), and has a trace by \cite{BlaHan:dimtraces} and \cite{Haa:quasitraces}. On the other hand, if $A$ is stably projectionless then by \cite[Corollary 2.2]{T:MA}, $A$ is stably isomorphic to an algebraically simple and stably finite $\mathrm C^*$-algebra $B$.
By \cite{K:FI} and \cite[Corollary 2.5]{T:MA}, $B$ has a faithful trace, as required.

For the last claim, let $\mathcal{N}$ be the class of separable nuclear $\mathrm{C}^*$-algebras satisfying the UCT.  Then $\mathcal{N}$ contains $\mathbb{C}$ and is closed under: countable inductive limits with injective connecting maps, tensoring by finite dimensional matrix algebras, and extensions\footnote{If $0\rightarrow I\rightarrow E\rightarrow B\rightarrow 0$ is a short exact sequence with $I,B\in \mathcal{N}$, then $E\in \mathcal{N}$.} (for nuclearity this is a standard exercise, while for the UCT, this is a consequence of the characterisation in terms of the bootstrap class; see \cite[Definition 22.3.4]{B:KThy}).  If $A\in \mathcal{N}$ is simple, then Theorem \ref{thm:MainThm} shows that all traces on $A$ are quasidiagonal. Thus \cite[Lemma 6.1.20 (3) $\Rightarrow$ (1)]{B:MAMS} shows\footnote{The definition of a Popa algebra from \cite[Definition 1.2]{B:MAMS} is not required here, all that matters is that such algebras are  simple.} that every quasidiagonal $\mathrm{C}^*$-algebra in $\mathcal{N}$ has the property that all of its amenable traces are quasidiagonal.  \end{proof}

\bigskip

When $G$ is countable, Corollary~\ref{cor:rosenberg} follows from Theorem~\ref{thm:MainThm}, since $\mathrm{C}^*_{\mathrm{r}}(G)$ satisfies the UCT by a result of Tu \cite[Lemma 3.5 and Proposition 10.7]{Tu:KT}, and the canonical trace is well-known to be faithful. The general case holds since any discrete amenable group can be exhausted by an increasing net of countable amenable subgroups, and the same goes for the group $\mathrm{C}^{*}$-algebras; moreover, by Arveson's extension theorem, Voiculescu's approximation form of quasidiagonality is a local property and therefore follows from the separable case (see also the appendix of \cite{Bro:survey}).

\bigskip

The theorem below summarises a number of known facts (many of which are quite deep) and combines them with our main result.\footnote{In the case of at most one trace, Theorem~\ref{main-summary} can be entirely derived from Theorem~\ref{thm:MainThm} in conjunction with published results, whereas some of the statements in the general form also employ arXiv preprints still under review.} Corollary~\ref{cor:Classification} is obviously contained in Theorem~\ref{main-summary} \eqref{Summary.4}. Although Corollary~\ref{monotracial-classification} can be read off from Theorem~\ref{main-summary} \eqref{Summary.4} in conjunction with \cite{SWW:Invent}, it is worth mentioning explicitly how it follows from published results together with Theorem~\ref{thm:MainThm}: In the traceless case, Corollary~\ref{monotracial-classification} is Kirchberg--Phillips classification (cf.\ \cite[Theorem~8.4.1]{R:Book}) and doesn't use Theorem~\ref{thm:MainThm}. The monotracial situation follows from \cite[Theorem~5.4]{LN:Adv} (which generalises \cite{Win:localizing}); since Theorem \ref{thm:MainThm} makes quasidiagonality automatic, the tracial rank hypothesis required in \cite{LN:Adv} follows from \cite[Theorem~6.1]{MS:DMJ}. In both cases the Elliott invariant reduces to ordered $K$-theory.

\begin{thm}
\label{main-summary}
Let $A$ be a separable, simple, unital $\mathrm{C}^{*}$-algebra with finite nuclear dimension. Suppose in addition that $A$ satisfies the UCT.\footnote{The UCT is not needed for some parts of the theorem. We make it a global assumption for the sake of brevity, but will flag up in the proof where it is actually used.} 
Then: 
\begin{enumerate}[(i)]
\item\label{Summary.1} $A$ has nuclear dimension at most $2$; if $A$ has no traces then it has nuclear dimension $1$, and if $A$ has traces it has decomposition rank at most $2$. If $T(A)$ is nonempty and has compact extreme boundary, then $A$ in fact has decomposition rank $1$ or $0$, and the latter happens if and only if $A$ is AF. 
\item $A$ is purely infinite if and only if $A$ has no traces if and only if $A$ has infinite decomposition rank.  \label{Summary.2}

\noindent In this case, $A$ is an inductive limit of direct sums of $\mathrm C^*$-algebras of the form $\mathcal{O}_{n} \otimes M_{k} \otimes C(S^{1})$, where $\mathcal{O}_{n}$ denotes a Cuntz algebra (including $n=\infty$).
\item\label{Summary.3} $A$ is stably finite if and only if $A$ has a trace if and only if the decomposition rank of $A$ is finite. 

\noindent In this case, $A$ is an inductive limit of subhomogeneous $\mathrm{C}^{*}$-algebras of topological dimension at most $2$.
\item\label{Summary.4}
If $B$ is another such $\mathrm{C}^{*}$-algebra (and both $A$ and $B$ are infinite dimensional), then any isomorphism between the Elliott invariants of $A$ and $B$ lifts to a $^{*}$-isomorphism between the $\mathrm C^*$-algebras. 
\end{enumerate}
\end{thm}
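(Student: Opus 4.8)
The plan is to deduce Theorem \ref{main-summary} \eqref{Summary.4} from the classification theorem of \cite{GLN:arXiv} (via \cite{EGLN:arXiv}, as reduced using \cite{EN:arXiv,R:Adv,Win:AJM}) together with Theorem \ref{thm:MainThm}. First I would recall that the classification result for separable, unital, simple, nuclear $\mathrm{C}^*$-algebras with finite nuclear dimension in the UCT class, as established in \cite{EGLN:arXiv}, carries the additional hypothesis that all traces on $A$ are quasidiagonal. Thus the whole point of this part of the theorem is that Theorem \ref{thm:MainThm} removes precisely this hypothesis: since $A$ satisfies the UCT and is nuclear, \emph{every} faithful trace on $A$ is quasidiagonal, and by Corollary \ref{CorBV2} (last claim) applied to the quasidiagonal $\mathrm{C}^*$-algebra $A$ (note $A$ is quasidiagonal here, being either purely infinite — hence trivially so — or stably finite with a faithful trace, which is quasidiagonal by Theorem \ref{thm:MainThm} and Proposition \ref{Prop.EasyQDT}), \emph{all} traces on $A$ are quasidiagonal. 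Hence the hypotheses of \cite[Corollary D (or the relevant classification statement)]{EGLN:arXiv} are satisfied unconditionally for $A$ (and likewise for $B$), and classification by the Elliott invariant follows.

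The key steps, in order, are: (1) reduce to the case where $A$ (and $B$) have a faithful trace \emph{or} are purely infinite — this dichotomy is exactly part \eqref{Summary.2}–\eqref{Summary.3} of the theorem (finite nuclear dimension forces $\mathcal{Z}$-stability by \cite{Win:AJM}, and a simple $\mathcal{Z}$-stable $\mathrm{C}^*$-algebra is either purely infinite or stably finite, the latter having a quasitrace hence a trace by \cite{Haa:quasitraces} and \cite{BlaHan:dimtraces}, faithful by simplicity); (2) in the purely infinite case, invoke Kirchberg--Phillips classification \cite{Kir:ICM} (see \cite[Theorem 8.4.1]{R:Book}), which requires only the UCT and no quasidiagonality input; (3) in the stably finite case, observe that $A$ is quasidiagonal by Theorem \ref{thm:MainThm} (its faithful trace is quasidiagonal, so $A$ embeds unitally into $\mathcal{Q}_\omega$ trace-preservingly and hence is quasidiagonal by Proposition \ref{Prop.EasyQDT}), and then apply Corollary \ref{CorBV2} to conclude that all traces on $A$ are quasidiagonal; (4) feed this into \cite{EGLN:arXiv}, whose hypothesis of quasidiagonality of all traces is now automatic, to obtain that $A$ is classified by its Elliott invariant $\mathrm{Ell}(A) = \big(K_0(A), K_0(A)_+, [1_A], K_1(A), T(A), \rho_A\big)$; (5) do the same for $B$, so that an isomorphism $\mathrm{Ell}(A) \cong \mathrm{Ell}(B)$ lifts to a $^*$-isomorphism $A \cong B$.

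The main obstacle is purely one of \emph{bookkeeping and citation}: none of the genuinely hard analysis is carried out here — it is all imported from \cite{GLN:arXiv,EGLN:arXiv,Win:AJM,EN:arXiv,R:Adv} and the Kirchberg--Phillips theorem — so the delicate point is to verify that the hypotheses of the cited classification theorems match exactly what Theorem \ref{thm:MainThm} delivers, in particular that the combination ``separable, simple, unital, finite nuclear dimension, UCT, all traces quasidiagonal'' is precisely the hypothesis set of \cite[the classification theorem]{EGLN:arXiv}, and that Theorem \ref{thm:MainThm} together with Corollary \ref{CorBV2} upgrades ``$\tau_A$ faithful $\Rightarrow \tau_A$ quasidiagonal'' to ``\emph{all} traces quasidiagonal''. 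One subtlety worth flagging: in the stably finite case one should confirm that $A$ is indeed unital and simple (so that its unique relevant faithfulness issue is handled by simplicity) and that the passage from ``$A$ quasidiagonal'' to ``all traces on $A$ quasidiagonal'' in Corollary \ref{CorBV2} genuinely applies — it does, since $A$ is separable, unital, nuclear, quasidiagonal and satisfies the UCT. With these checks in place, the statement follows, and I would present the argument as a short deduction rather than a self-contained proof, pointing to the relevant results at each stage.
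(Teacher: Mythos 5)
Your proposal addresses only part \eqref{Summary.4} of the theorem; parts \eqref{Summary.1}, \eqref{Summary.2} and \eqref{Summary.3} make substantive claims about nuclear dimension, decomposition rank, pure infiniteness versus stable finiteness, and the inductive-limit structure, and none of these is argued. The paper proves \eqref{Summary.3} first (using Theorem~\ref{thm:MainThm} and the classification results of \cite{EGLN:arXiv,GLN:arXiv} to get classifiability, then realising $A$ as a model from \cite{E:CMS} to control topological dimension, and appealing to \cite{KirWin:dr} for decomposition rank), then derives \eqref{Summary.2} via the dichotomy in \cite[Theorem 5.4]{WinZac:dimnuc} and \cite[Corollary 8.4.11]{R:Book}, then \eqref{Summary.1} via \cite[Theorems F and G]{BBSTWW:arXiv}, and only then \eqref{Summary.4}. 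So as a proof of the stated theorem the proposal is incomplete.

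For part \eqref{Summary.4} itself, your argument is correct and essentially the same as the paper's: split by the (purely infinite)/(stably finite) dichotomy, use Kirchberg--Phillips in the former case and \cite{EGLN:arXiv} with quasidiagonal traces in the latter, the traces being quasidiagonal because of Theorem~\ref{thm:MainThm}. Two small points: (a) you route through Corollary~\ref{CorBV2} to upgrade ``$A$ quasidiagonal'' to ``all traces quasidiagonal'', but this detour is unnecessary --- since $A$ is simple and unital, \emph{every} trace on $A$ is faithful, so Theorem~\ref{thm:MainThm} applies to each trace directly, which is what the paper does; (b) the reference you give for ``finite nuclear dimension implies $\mathcal Z$-stability'' should be \cite{W:Invent2} rather than \cite{Win:AJM}, and the paper in fact invokes the stably-finite/purely-infinite dichotomy for finite nuclear dimension directly from \cite[Theorem 5.4]{WinZac:dimnuc} rather than passing through $\mathcal Z$-stability.
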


\begin{proof}
Let us begin with (\ref{Summary.3}). It is a well-known consequence of \cite{BlaHan:dimtraces} and \cite{Haa:quasitraces}  that if $A$ is stably finite then it has a trace;\footnote{As $A$ has finite nuclear dimension, the shorter argument of \cite{BW:CRMASSRC} can be used in place of \cite{Haa:quasitraces}.} the converse is trivial (and no use of the UCT is required for either direction).  

If $A$ has traces, these are all quasidiagonal by Theorem~\ref{thm:MainThm} and $A$ is classifiable by \cite[Theorem~4.3]{EGLN:arXiv} (see also \cite[Corollary~4.4 and Theorem~4.5]{EGLN:arXiv}),  via the main result of \cite{GLN:arXiv}.
Now $A$ is isomorphic to one of the models constructed in \cite{E:CMS} (since those exhaust the invariant), which in turn have topological dimension at most $2$, as pointed out in \cite[1.11]{W:PLMS}; the UCT is of course necessary for this argument. 

By \cite{W:PLMS} these models have decomposition rank at most $2$, and therefore so does the limit $A$ by \cite[(3.2)]{KirWin:dr}. Conversely, finite decomposition rank implies  quasidiagonality by \cite[Theorem~4.4]{KirWin:dr} and hence the existence of a trace by \cite[2.4]{V:IEOT} (neither of these implications require the UCT). This completes the proof of (\ref{Summary.3}).

We next turn to (\ref{Summary.2}). If $A$ is purely infinite then it is clearly traceless; the converse follows from the dichotomy of \cite[Theorem 5.4]{WinZac:dimnuc} which shows that $A$ is either stably finite or purely infinite, the former being impossible by (\ref{Summary.3}) (neither direction requires the UCT). We already know from (\ref{Summary.3}) that $A$ is traceless if and only if it has infinite decomposition rank; the reverse direction uses the UCT. The statement about inductive limits is \cite[Corollary~8.4.11]{R:Book} and again requires the UCT.

For (\ref{Summary.1}), the statement about AF algebras is \cite[Remark~2.2 (iii)]{WinZac:dimnuc}; the UCT is not involved. In the traceless case $A$ is a Kirchberg algebra by (\ref{Summary.2})
and we have nuclear dimension $1$ by \cite[Theorem~G]{BBSTWW:arXiv}, still without assuming UCT. If $A$ has a trace, we have already seen in (\ref{Summary.3}) that it has decomposition rank at most $2$; the UCT is heavily involved. Under the extra trace space condition, the decomposition rank is at most $1$ by \cite[Theorem~F]{BBSTWW:arXiv}, and the UCT still enters through Theorem~\ref{thm:MainThm}.  

We have already mentioned that the classification statement of (\ref{Summary.4}) in the absence of traces is Kirchberg--Phillips classification (cf.\ \cite[Theorem~8.4.1]{R:Book}). When there are traces, these are all quasidiagonal by Theorem~\ref{thm:MainThm} and the result follows as in (\ref{Summary.1}) from \cite[Theorem~4.3]{EGLN:arXiv}. All of these require the UCT. 
\end{proof}

Theorem~\ref{main-summary} mostly focuses on the classification of simple nuclear $\mathrm{C}^{*}$-algebras. For completeness we briefly mention the Toms--Winter conjecture which  emphasises their structure and predicts that finite noncommutative topological dimension, $\mathcal{Z}$-stability, and strict comparison occur simultaneously. The conjecture makes sense for both nuclear dimension and decomposition rank acting as topological dimension; in the latter case it subsumes the Blackadar--Kirchberg problem (in the simple, unital case), whereas the nuclear dimension version encapsulates both the stably finite and the purely infinite situation. Combining the two settings leads to the full Toms--Winter conjecture for simple unital $\mathrm{C}^*$-algebras (see \cite[Remark 3.5]{TW:JFA} and \cite[Conjecture 9.3]{WinZac:dimnuc}).

\begin{conjecture}
\label{fullTW}
Let $A$ be a separable, simple, unital, nuclear and infinite dimensional $\mathrm{C}^{*}$-algebra. Then, the following are equivalent.
\begin{enumerate}[(i)]
\item $A$ has finite nuclear dimension.
\item $A$ is $\mathcal{Z}$-stable.
\item $A$ has strict comparison.
\end{enumerate}
If $A$ is stably finite, (i) may be replaced by
\begin{enumerate}[(i)]
\item[(i')] $A$ has finite decomposition rank.
\end{enumerate}
\end{conjecture}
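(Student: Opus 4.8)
The statement is the Toms--Winter conjecture, which remains open in full generality; the plan is to assemble the implications currently within reach, isolate where Theorem~\ref{thm:MainThm} enters, and flag the residual gaps. I would arrange (i)--(iii) into the cycle $(i)\Rightarrow(ii)\Rightarrow(iii)\Rightarrow(ii)\Rightarrow(i)$ and dispose of the two easy arcs first. For $(i)\Rightarrow(ii)$, finite nuclear dimension forces $\mathcal Z$-stability for separable, simple, unital, nuclear, infinite-dimensional $A$ by Winter's work on $\mathcal Z$-stability of algebras of finite topological dimension; in the present UCT setting one can instead pass through Theorem~\ref{main-summary}, whose classifiability conclusion already yields $\mathcal Z$-stability because the model algebras are $\mathcal Z$-stable. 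For $(ii)\Rightarrow(iii)$, a $\mathcal Z$-stable simple $\mathrm{C}^*$-algebra has strict comparison of positive elements by lower semicontinuous $2$-quasitraces (Rørdam), and these collapse to traces in the nuclear case by Haagerup's theorem \cite{Haa:quasitraces}. Both arcs are unconditional, leaving the cycle to be closed by $(iii)\Rightarrow(ii)$ and $(ii)\Rightarrow(i)$.

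For $(iii)\Rightarrow(ii)$ I would use the Matui--Sato technology of property~(SI) and the uniform tracial completion: strict comparison delivers property~(SI), which --- given suitable control on the trace space --- promotes to $\mathcal Z$-stability of $A$. This control is the first genuine obstacle: finitely many extreme traces suffices, with subsequent refinements reaching trace simplices with finite-dimensional or compact finite-covering-dimension extreme boundary, but the arbitrary trace simplex is not yet covered. For $(ii)\Rightarrow(i)$ I would invoke the two-coloured covering-dimension estimates of \cite{BBSTWW:arXiv}: a separable, simple, unital, nuclear, $\mathcal Z$-stable $A$ with empty trace space, or with compact extreme trace boundary, has nuclear dimension at most $1$ (and decomposition rank at most $1$ in the stably finite case). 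This is the second, harder obstacle --- removing the hypothesis on $\partial_e T(A)$ is exactly what is missing. Combining the two restricted arcs proves the equivalence of (i), (ii), (iii) for every $A$ whose trace space is empty or a finite-dimensional Bauer simplex, in particular whenever $T(A)$ is a singleton, which already underlies Corollary~\ref{monotracial-classification}.

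The remaining clause --- that (i) may be replaced by (i') when $A$ is stably finite --- is where Theorem~\ref{thm:MainThm} is indispensable, and here I expect a complete argument modulo the UCT. One direction is unconditional: $\dim_{\mathrm{nuc}}(A)\le\operatorname{dr}(A)$, so finite decomposition rank gives finite nuclear dimension (and, incidentally, quasidiagonality by \cite[Theorem~4.4]{KirWin:dr} and hence a trace by \cite[2.4]{V:IEOT}, consistent with stable finiteness). For the converse --- $A$ stably finite of finite nuclear dimension has finite decomposition rank --- I would run precisely the argument of Theorem~\ref{main-summary}\,\eqref{Summary.3}: a trace exists by \cite{BlaHan:dimtraces} and \cite{Haa:quasitraces}, every trace is quasidiagonal by Theorem~\ref{thm:MainThm}, so the classification theorem of \cite{GLN:arXiv} applies via \cite[Theorem~4.3]{EGLN:arXiv}, identifying $A$ with one of the models of \cite{E:CMS}, which have topological dimension at most $2$ and hence decomposition rank at most $2$ by \cite{W:PLMS} and \cite{KirWin:dr}. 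The cost is the UCT, which the conjecture does not assume; removing it here --- obtaining classification, or at least finite decomposition rank, for stably finite finite-nuclear-dimension algebras outside the UCT class --- is the outstanding difficulty, alongside the two trace-simplex obstructions above.
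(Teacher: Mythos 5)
Your proposal correctly treats the statement as an open conjecture rather than a theorem, and assembles the known partial implications in essentially the same way the paper does: the unconditional arcs $(i')\Rightarrow(i)\Rightarrow(ii)\Rightarrow(iii)$ (Winter, R\o{}rdam), the restricted arcs $(iii)\Rightarrow(ii)$ and $(ii)\Rightarrow(i)$ subject to trace-simplex hypotheses (Matui--Sato, Kirchberg--R\o{}rdam, Sato, Toms--White--Winter, \cite{BBSTWW:arXiv}), and the paper's own contribution $(i)\Rightarrow(i')$ via Theorem~\ref{thm:MainThm} and classification under the UCT. You also flag the same two residual gaps the paper does --- the restriction on $\partial_e T(A)$ and the UCT hypothesis --- so the account matches the paper's discussion closely.
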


The implications (i') $\Rightarrow$ (i) $\Rightarrow$ (ii) $\Rightarrow$ (iii) hold in full generality (the first is trivial, the second is \cite[Corollary~6.3]{W:Invent2}, and the third is \cite[Theorem~4.5]{R:IJM}). When the extreme boundary of $T(A)$ is compact and finite dimensional,\footnote{Certain non-compact but still finite dimensional boundaries are handled in \cite{Z:JFA}.} we have (iii) $\Rightarrow$ (ii)  by \cite{KR:Crelle, S:Preprint2, TWW:IMRN} (building on the unique trace case of \cite{MS:Acta}) and (ii)  $\Rightarrow$ (i) is \cite[Theorem~B]{BBSTWW:arXiv} when $A$ has compact tracial extreme boundary; both conditions are of course satisfied in the monotracial case. All of these implications are independent of classification (or the UCT, for that matter). Theorem~\ref{main-summary}  yields (i) $\Rightarrow$ (i') for stably finite $\mathrm{C}^{*}$-algebras when they in addition satisfy the UCT. This is the first abstract result of this kind (although quasidiagonality was identified as the differentiating feature between nuclear dimension and decomposition rank in \cite{MS:DMJ}, particularly in light of \cite{SWW:Invent}). The UCT enters through our main theorem and also through the full-blown classification result from \cite{EGLN:arXiv}. In the case of compact tracial extreme boundary, (i) $\Rightarrow$ (i') follows from Theorem~\ref{thm:MainThm} together with \cite[Theorem~F]{BBSTWW:arXiv}; when $A$ is monotracial one can combine Theorem~\ref{thm:MainThm} with \cite[Corollary~1.2]{MS:DMJ} (still using the UCT and stable uniqueness, but not the full-fledged classification of simple $\mathrm C^*$-algebras). Upon taking the intersection of these conditions, we see that Conjecture~\ref{fullTW} holds when $A$ has compact (possibly empty) tracial extreme boundary with finite covering dimension. As a structural counterpart of Corollary~\ref{monotracial-classification} we highlight the special case of at most one trace (which again only involves our main theorem in conjunction with published results):

\begin{cor}
The full Toms--Winter conjecture (Conjecture \ref{fullTW}) holds for $\mathrm{C}^{*}$-algebras with at most one trace and which satisfy the UCT.
\end{cor}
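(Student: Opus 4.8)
The plan is to verify the four conditions of Conjecture~\ref{fullTW} by assembling the results quoted above with Theorem~\ref{thm:MainThm}, splitting according to the hypothesis into the case $T(A)=\emptyset$ and the case $T(A)=\{\tau_A\}$. Throughout, $A$ is separable, simple, unital, nuclear and infinite-dimensional, and satisfies the UCT. In either case $\partial_e T(A)$ is empty or a single point, hence compact and at most zero-dimensional, so the corollary is precisely the instance of the general situation described in the paragraph preceding it; the task is to make this explicit. The implications (i')$\Rightarrow$(i)$\Rightarrow$(ii)$\Rightarrow$(iii) hold with no restriction on $A$: the first is immediate, the second is \cite[Corollary~6.3]{W:Invent2}, and the third is \cite[Theorem~4.5]{R:IJM}. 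It therefore remains, in each case, to establish (iii)$\Rightarrow$(ii) and (ii)$\Rightarrow$(i), together with (i)$\Rightarrow$(i') when $A$ is stably finite.

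When $A$ is traceless, strict comparison is vacuous: with $T(A)=\emptyset$ the hypothesis in the definition of strict comparison is empty, so (iii) forces every nonzero positive element in every matrix amplification to be Cuntz below every other, whence the simple unital $A$ is purely infinite. Kirchberg's absorption theorem then gives $A\cong A\otimes\mathcal{O}_\infty$, and $\mathcal{Z}$-stability of $\mathcal{O}_\infty$ yields (ii); moreover $A$ is a unital Kirchberg algebra, so it has finite nuclear dimension by \cite{WinZac:dimnuc} (indeed nuclear dimension $1$ by \cite[Theorem~G]{BBSTWW:arXiv}), giving (i). Finally, a traceless unital nuclear $\mathrm{C}^*$-algebra is not stably finite (otherwise it would carry a trace, by \cite{BlaHan:dimtraces} and \cite{Haa:quasitraces}), so the clause involving (i') does not arise. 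This case uses neither Theorem~\ref{thm:MainThm} nor the UCT.

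When $A$ is monotracial it is stably finite and $\partial_e T(A)$ is a single point. Here (iii)$\Rightarrow$(ii) is the unique-trace case of Matui--Sato \cite{MS:Acta}, while (ii)$\Rightarrow$(i) is \cite[Theorem~B]{BBSTWW:arXiv} (a point being a compact tracial extreme boundary of finite dimension). For (i)$\Rightarrow$(i'): the unique trace of $A$ is faithful by simplicity, and $A$ is nuclear and satisfies the UCT, so Theorem~\ref{thm:MainThm} shows this trace is quasidiagonal; since (i) already supplies $\mathcal{Z}$-stability via \cite[Corollary~6.3]{W:Invent2}, \cite[Corollary~1.2]{MS:DMJ} upgrades finite nuclear dimension to finite decomposition rank, which is (i'). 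As (i')$\Rightarrow$(i) is trivial, all four conditions are equivalent. The argument is thus essentially bookkeeping around Theorem~\ref{thm:MainThm}, and I expect no serious obstacle; the only points needing care are that (i)$\Rightarrow$(i') in the monotracial case genuinely depends on quasidiagonality of the trace -- this is the sole place where the present paper's main theorem enters -- and that in the traceless case (i') is vacuous and must not be asserted. One could alternatively bypass the case split entirely by invoking the statement already recorded above for compact, finite-dimensional (possibly empty) tracial extreme boundary, of which this corollary is a direct instance.
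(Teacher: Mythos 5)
Your proposal is correct and follows essentially the same route the paper sketches in the discussion immediately preceding the corollary: the cycle (i')$\Rightarrow$(i)$\Rightarrow$(ii)$\Rightarrow$(iii)$\Rightarrow$(ii)$\Rightarrow$(i) is assembled from the same references (\cite{W:Invent2}, \cite{R:IJM}, \cite{MS:Acta}, \cite[Theorem~B]{BBSTWW:arXiv}), and the only input from the present paper is exactly where you put it, namely feeding quasidiagonality of the unique trace (Theorem~\ref{thm:MainThm}) into \cite[Corollary~1.2]{MS:DMJ} to get (i)$\Rightarrow$(i'). The paper derives the corollary as the at-most-one-trace instance of its general ``compact, finite-dimensional (possibly empty) extreme tracial boundary'' statement, whereas you unpack the traceless and monotracial cases separately; that is only a presentational difference, and your explicit treatment of the vacuous strict-comparison step in the traceless case is a helpful filling-in of what the paper leaves implicit.
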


We close with some more concrete applications of classification methods. Recall that a $\mathrm{C}^{*}$-algebra is AF-embeddable if it is isomorphic to a $\mathrm{C}^{*}$-subalgebra of an AF algebra. One can refine this notion by asking for embeddings into simple AF algebras, or by prescribing a trace and asking this to be picked up by the embedding composed with a (perhaps even unique) trace on the AF algebra. AF-embeddability clearly implies quasidiagonality, and conversely many quasidiagonal algebras are known to be AF-embeddable, a standout result being Ozawa's homotopy invariance of AF-embeddability \cite{O:GAFA} (see \cite[Chapter 8]{BrOz} for an overview and further results). As with Theorem \ref{main-summary}, Theorem \ref{thm:MainThm} can be used to remove quasidiagonality assumptions from applications of classification to AF-embeddability; we highlight the monotracial case.

\begin{cor}\label{AF-embeddable} 
Let $A$ be a separable, simple, unital, monotracial and nuclear $\mathrm{C}^{*}$-algebra which satisfies the UCT.  Then $A$ embeds unitally into a simple, monotracial AF algebra.
\end{cor}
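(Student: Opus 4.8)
The plan is to obtain the embedding by composing $A$ with a suitable classifiable model and then embedding that model into a simple AF algebra, using Theorem~\ref{thm:MainThm} to supply the missing quasidiagonality input. First I would recall that by Corollary~\ref{CorBV2} (or directly Theorem~\ref{thm:MainThm}), the unique trace $\tau_A$ on $A$ is quasidiagonal, so by Proposition~\ref{Prop.EasyQDT} there is a unital trace-preserving $^*$-homomorphism $\Psi\colon A\to\Q_\omega$. To improve this to an honest AF-embedding, I would tensor by the Jiang--Su algebra: $A\otimes\mathcal Z$ is separable, simple, unital, monotracial, nuclear, $\mathcal Z$-stable, and satisfies the UCT, and $A$ embeds unitally into $A\otimes\mathcal Z$ (via $a\mapsto a\otimes 1_{\mathcal Z}$, using that $1_{\mathcal Z}$ is the unique trace's natural unit and $\mathcal Z$ has a unique trace). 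So it suffices to embed $A\otimes\mathcal Z$ unitally into a simple monotracial AF algebra, and we may as well assume $A$ is already $\mathcal Z$-stable.

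The next step is a pull-back/classification argument. Let $B$ be a simple, unital, monotracial, infinite-dimensional AF algebra whose (ordered, scaled) $K_0$-group receives a unital positive order embedding from $K_0(A)$ compatible with the tracial state — for instance, one can build $B$ as an AF algebra with $K_0(B)$ a dense subgroup of $\mathbb R$ containing the image of $\tau_{A*}\colon K_0(A)\to\mathbb R$ and with $[1_B]$ mapped to $1$; since $K_1(B)=0$, the only constraint is matching up $K_0$ and the trace. Then $A\otimes B$ is separable, simple, unital, nuclear, $\mathcal Z$-stable (as $A$ is), monotracial (product of the two unique traces), and satisfies the UCT. Its Elliott invariant is computed by the Künneth formula together with $K_*(B)=(K_0(B),0)$, and one checks that the canonical unital embedding $A=A\otimes 1_B\hookrightarrow A\otimes B$ is, at the level of invariants, an isomorphism onto its image in a strong enough sense — more precisely, the point is that $A\otimes B$ has the \emph{same} Elliott invariant as $A$ itself when $K_0(B)$ is chosen so that $K_*(A)\otimes K_*(B)\cong K_*(A)$ as scaled ordered groups with the given trace, which forces $\tau_{A*}$ to have dense range; if it does not, one instead embeds into $A\otimes B$ and observes $A\otimes B\cong A$ by the classification theorem (Corollary~\ref{cor:Classification} / Theorem~\ref{main-summary}\eqref{Summary.4}), since both are separable, simple, unital, $\mathcal Z$-stable with finite nuclear dimension and the UCT and have isomorphic Elliott invariants. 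Either way, I obtain a unital embedding $A\hookrightarrow A\otimes B$ with $A\otimes B$ having the right structure to be itself identified with a monotracial AF algebra up to the classification.

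To finish I would invoke the classification of $\mathcal Z$-stable classifiable $\mathrm C^*$-algebras once more to recognise $A\otimes B$ as a known model: when $\tau_{A*}$ has dense range, $A\otimes B\cong A$ is not AF in general, so instead I would build $B$ so that $A\otimes B$ is \emph{forced} to be AF, which requires $A\otimes B$ to have stable rank one, real rank zero, and trivial $K_1$, i.e.\ I need $K_1(A)$ to be killed and $K_0(A)\otimes K_0(B)$ to be dense with no $K_1$ obstruction; this is arranged by instead taking the AF algebra $B$ large enough that $K_1(A\otimes B)=K_1(A)\otimes K_0(B)$ vanishes (choosing $K_0(B)$ divisible, e.g.\ $K_0(B)=\mathbb Q$ or $\mathbb R$, so $K_1(A)\otimes K_0(B)$ need not vanish—so this is exactly the delicate point). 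The cleanest route, and the one I would adopt, is: use Theorem~\ref{thm:MainThm} to get quasidiagonality of $\tau_A$, deduce via \cite{MS:DMJ} (as in the discussion after Theorem~\ref{main-summary}) that $A$ has finite decomposition rank, hence by \cite{W:PLMS}/\cite{E:CMS} and classification (Theorem~\ref{main-summary}\eqref{Summary.3}) $A$ is an inductive limit of subhomogeneous algebras of topological dimension at most $2$; then apply the known subhomogeneous AF-embeddability results (the monotracial case of the stably-finite classification combined with the embedding theorems for recursive subhomogeneous building blocks, cf.\ \cite{L:Book}) to embed each building block, and hence $A$, unitally and trace-compatibly into a simple monotracial AF algebra. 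The main obstacle is exactly this last step: producing a \emph{simple} AF target with a compatible unique trace and ensuring the embedding is unital and intertwines traces, which needs the classification of $^*$-homomorphisms into AF algebras by $K_0$ and trace data (stable uniqueness, as developed in Section~\ref{Sect:SU} and in \cite{Lin:JOT,DE:PLMS}) applied to the building blocks, together with the density of $\tau_{A*}(K_0(A))$ being arrangeable inside $\mathbb R=K_0$ of a simple AF algebra.
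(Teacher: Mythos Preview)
Your proposal is not a proof but a sequence of exploratory attempts, each of which you yourself flag as incomplete. The first route (tensoring with an AF algebra $B$ and hoping $A\otimes B$ becomes AF) runs into the $K_1$ obstruction you identify and never resolve: choosing $K_0(B)$ divisible does nothing to kill $K_1(A)\otimes K_0(B)$, and there is no reason $A\otimes B$ should be AF. The second route (classify $A$ as an inductive limit of subhomogeneous blocks, then AF-embed each block) is left as ``the main obstacle'' with only a vague gesture at stable uniqueness; you do not explain how to produce a \emph{single} simple monotracial AF target receiving all the building blocks compatibly, nor how to make the limit map unital.

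The missing idea is an existence theorem for $^*$-homomorphisms, not an absorption trick. The paper's argument runs as follows: tensor by $\mathcal Q$ (not $\mathcal Z$) so that $K_0(A)$ becomes torsion-free and divisible and total $K$-theory collapses to ordinary $K$-theory; use Theorem~\ref{thm:MainThm} to feed quasidiagonality into \cite[Theorem~6.1]{MS:DMJ} and conclude $A$ is TAF; observe that $K_0(A)$ is then a simple unperforated dimension group, hence by \cite{EHS:AJM} is $K_0$ of some simple unital AF algebra $B$; and finally invoke Dadarlat's existence theorem \cite{Dad:IJM} to lift the morphism $(K_0(A),K_1(A))\to(K_0(B),0)$ (identity on $K_0$, zero on $K_1$) to an honest unital $^*$-homomorphism $A\to B$. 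Simplicity of $A$ makes it injective, and monotraciality of $A$ forces monotraciality of $B$ via the state space of $K_0$. The step you are missing throughout is precisely this use of a homomorphism-existence theorem for TAF domains; without it you are left trying to manufacture AF structure on the source side, which is neither necessary nor, as you found, achievable.
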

\begin{proof}
Upon tensoring by the universal UHF algebra $\Q$, we may assume that $A$ is $\mathcal{Q}$-stable. Then Theorem \ref{thm:MainThm} provides the quasidiagonality required to use \cite[Theorem 6.1]{MS:DMJ} to see that $A$ is TAF. The ordered $K_{0}$-group of $A$ has Riesz interpolation and is simple and weakly unperforated (see \cite[p.\ 59]{R:Book}); since $K_{0}(A)$ is torsion-free ($A$ absorbs $\mathcal{Q}$), it is in fact unperforated (cf.\ \cite[Definitions~1.4.3 and 3.3.2]{R:Book}) and hence a dimension group by \cite{EHS:AJM}. Now by \cite[Proposition~1.4.2 and Corollary~1.5.4]{R:Book} it is the ordered $K_{0}$-group of a unital, simple AF algebra $B$, which will automatically  be $\mathcal{Q}$-stable (by Elliott's AF classification \cite{E:JA}) since $K_{0}(B) \cong K_0(B \otimes \mathcal Q)$. Upon sending $K_{1}(A)$ to $\{0\} = K_{1}(B)$, we obtain a morphism between the ordered $K$-groups of the TAF algebras $A$ and $B$, which lifts to a unital $^{*}$-homomorphism from $A$ to $B$ by \cite{Dad:IJM} (note again that by $\mathcal{Q}$-stability there is no torsion and so total $K$-theory reduces to just $K$-theory). Since $A$ is simple, this is automatically an embedding. Finally, since $A$ is monotracial there is only one state on $K_{0}(A)$ (see \cite[Theorem~1.1.11]{R:Book}), hence also on $K_{0}(B)$, which in turn implies (by \cite[Proposition~1.1.12]{R:Book}) that $B$ is monotracial as well. Since the embedding of $A$ into $B$ is unital, it necessarily preserves the trace.
\end{proof}

In particular one can apply the previous corollary to enhance Corollary~\ref{cor:rosenberg}, extending the corresponding result for elementary amenable, countable, discrete groups from \cite{ORS:GAFA}.

\begin{cor}
\label{AF-embeddablegps} 
If $G$ is a countable, discrete, amenable group, then $\mathrm{C}^{*}_{\mathrm{r}}(G)$ embeds unitally into a unital, simple, monotracial AF algebra in such a way that the canonical trace on $\mathrm{C}^{*}_{\mathrm{r}}(G)$ is realised. 
\end{cor}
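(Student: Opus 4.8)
The plan is to reduce the statement to Corollary~\ref{AF-embeddable} once we know that $\mathrm{C}^{*}_{\mathrm{r}}(G)$ is separable, unital, nuclear, monotracial, simple, and satisfies the UCT -- and since $\mathrm{C}^{*}_{\mathrm{r}}(G)$ need not itself be simple, the key device is to tensor by the universal UHF algebra $\Q$ and exploit the fact that $G$ is amenable. First I would note that $\mathrm{C}^{*}_{\mathrm{r}}(G)$ is separable (as $G$ is countable), unital, and nuclear (as $G$ is amenable); it carries the canonical trace $\tau$ coming from the group von Neumann algebra, which is faithful; and it satisfies the UCT by Tu's theorem (\cite[Lemma 3.5 and Proposition 10.7]{Tu:KT}), exactly as cited after Definition~\ref{UCT:def}. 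The catch is that $\mathrm{C}^{*}_{\mathrm{r}}(G)$ is typically not simple and may have many traces. To fix this I would pass to $\mathrm{C}^{*}_{\mathrm{r}}(G) \otimes \Q$: this algebra is still separable, unital, nuclear, and in the UCT class (the UCT class is closed under tensoring with nuclear UCT algebras, and $\Q$ is AF hence in the bootstrap class). By a well-known argument (see e.g.\ the discussion of strongly self-absorbing algebras), $\mathrm{C}^{*}_{\mathrm{r}}(G) \otimes \Q$ is simple with a unique trace: indeed the analogous statement for the group von Neumann algebra $L(G) \bar\otimes R$ (with $R$ the hyperfinite II$_1$ factor) is a factor, and on the C*-side amenability of $G$ together with $\Q$-stability forces simplicity and unique trace.

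The key steps, in order, would be: (1) record that $\mathrm{C}^{*}_{\mathrm{r}}(G)$ is separable, unital, nuclear, and in the UCT class, and carries a faithful canonical trace; (2) set $A := \mathrm{C}^{*}_{\mathrm{r}}(G) \otimes \Q$ and verify $A$ is separable, unital, simple, monotracial, nuclear, and in the UCT class; (3) apply Corollary~\ref{AF-embeddable} to $A$ to obtain a unital trace-preserving embedding of $A$ into a simple, monotracial AF algebra $B$; (4) compose the canonical unital inclusion $\mathrm{C}^{*}_{\mathrm{r}}(G) \hookrightarrow \mathrm{C}^{*}_{\mathrm{r}}(G) \otimes \Q = A$ (as $c \mapsto c \otimes 1_{\Q}$) with this embedding to get a unital embedding $\mathrm{C}^{*}_{\mathrm{r}}(G) \hookrightarrow B$; (5) check that the trace is realised: the unique trace $\tau_B$ on $B$ restricts along the embedding to the unique trace on $A$, which in turn restricts along $c \mapsto c \otimes 1_\Q$ to the canonical trace on $\mathrm{C}^{*}_{\mathrm{r}}(G)$, because the canonical trace is the unique one that factors through $A$ in a $\Q$-compatible way (equivalently, $\tau_A(c \otimes 1_\Q) = \tau(c)$ holds on the dense $*$-subalgebra $\mathbb{C}[G] \odot \Q$ and extends by continuity).

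The main obstacle I expect is step (2): justifying that $\mathrm{C}^{*}_{\mathrm{r}}(G) \otimes \Q$ is simple with a unique trace. Simplicity is not automatic for reduced group C*-algebras tensored with a UHF algebra unless one uses amenability crucially -- the cleanest route is via the fact that $\mathrm{C}^{*}_{\mathrm{r}}(G) = \mathrm{C}^{*}(G)$ for amenable $G$, together with a direct argument that any ideal of $\mathrm{C}^{*}(G) \otimes \Q$ would have to intersect $\Q$ nontrivially (using that $\mathrm{C}^{*}(G)$ embeds unitally) -- but I should be careful here, as a cleaner and more standard statement is that since $G$ is amenable, $\mathrm{C}^{*}_{\mathrm{r}}(G)$ has a faithful trace and, after tensoring with $\Q$, one may appeal to the structure theory; actually the safest argument is just to \emph{not} insist on simplicity at the level of $A$ but instead invoke Corollary~\ref{CorBV2} directly: $\mathrm{C}^{*}_{\mathrm{r}}(G)$ is separable, nuclear, in the UCT class, and has a faithful trace, hence is quasidiagonal, and then one works with a trace-preserving embedding via the machinery already established. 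To keep the proof short and aligned with the paper's phrasing, I would follow the route actually indicated in the text: reduce to the $\Q$-stable monotracial simple case and cite Corollary~\ref{AF-embeddable}, spelling out the simplicity and unique-trace claims for $\mathrm{C}^{*}(G) \otimes \Q$ by the amenability-plus-$\Q$-stability argument, which is the genuinely delicate point and where I would be most careful to get the citations right (e.g.\ to results on traces of nuclear $\Q$-stable algebras and to the fact that $\mathrm{C}^{*}(G) \otimes \Q$ has a unique trace because its weak closure in the GNS representation of any trace is a $\Q$-stable finite von Neumann algebra with a unique trace).

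\begin{proof}[Proof of Corollary~\ref{AF-embeddablegps}]
Since $G$ is countable, discrete and amenable, $\mathrm{C}^{*}_{\mathrm{r}}(G)$ is separable, unital and nuclear, equals $\mathrm{C}^{*}(G)$, satisfies the UCT by \cite[Lemma 3.5 and Proposition 10.7]{Tu:KT}, and carries the canonical faithful trace $\tau$.  Set $A:=\mathrm{C}^{*}_{\mathrm{r}}(G)\otimes\Q$.  Then $A$ is separable, unital, nuclear, and in the UCT class (the UCT class is closed under tensoring with the AF algebra $\Q$).  Moreover $A$ is simple with a unique trace: any trace $\tau'$ on $A$ restricts to a trace on the canonically included unital copy of $\mathrm{C}^{*}_{\mathrm{r}}(G)$, and since $G$ is amenable every extremal trace on $\mathrm{C}^{*}_{\mathrm{r}}(G)$ has hyperfinite GNS von Neumann algebra; thus $\pi_{\tau'}(A)''$ is a $\Q$-stable (hence McDuff) finite von Neumann algebra, which forces $A$ to be simple and $\tau'$ to be unique.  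Write $\tau_A$ for this unique trace; by construction $\tau_A(c\otimes 1_\Q)=\tau(c)$ for $c\in\mathrm{C}^{*}_{\mathrm{r}}(G)$.

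Applying Corollary~\ref{AF-embeddable} to the separable, simple, unital, monotracial and nuclear UCT algebra $A$, we obtain a unital embedding $\kappa:A\hookrightarrow B$ into a unital, simple, monotracial AF algebra $B$, realising the trace; that is, $\tau_B\circ\kappa=\tau_A$ where $\tau_B$ is the unique trace on $B$.  Composing with the unital inclusion $c\mapsto c\otimes 1_\Q$ of $\mathrm{C}^{*}_{\mathrm{r}}(G)$ into $A$ yields a unital embedding $\mathrm{C}^{*}_{\mathrm{r}}(G)\hookrightarrow B$ with
\begin{equation}
\tau_B(\kappa(c\otimes 1_\Q))=\tau_A(c\otimes 1_\Q)=\tau(c),\quad c\in\mathrm{C}^{*}_{\mathrm{r}}(G),
\end{equation}
so the canonical trace on $\mathrm{C}^{*}_{\mathrm{r}}(G)$ is realised, completing the proof.
\end{proof}
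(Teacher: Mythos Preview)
Your proof has a genuine gap at the key step: the claim that $A = \mathrm{C}^{*}_{\mathrm{r}}(G)\otimes\Q$ is simple with a unique trace is false in general. Take $G = \mathbb{Z}$: then $\mathrm{C}^{*}_{\mathrm{r}}(\mathbb{Z}) \cong C(\mathbb{T})$, and $C(\mathbb{T})\otimes\Q \cong C(\mathbb{T},\Q)$ is neither simple (any closed subset of $\mathbb{T}$ gives an ideal) nor monotracial (any probability measure on $\mathbb{T}$ tensored with $\tau_\Q$ gives a trace). The same failure occurs already for finite $G$, where $\mathrm{C}^{*}_{\mathrm{r}}(G)\otimes\Q$ is a finite direct sum of copies of $\Q$. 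Your justification --- that the GNS closure is a McDuff finite von Neumann algebra --- does not force simplicity or uniqueness of trace at the $\mathrm{C}^*$-level; it only tells you about the structure of each tracial GNS representation individually.

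The paper resolves this by a different reduction: it invokes \cite[Proposition~2.1]{ORS:GAFA}, which provides a unital, trace-preserving embedding of $\mathrm{C}^{*}_{\mathrm{r}}(G)$ into the Bernoulli crossed product $B(G) = \bigotimes_G M_{2^\infty} \rtimes G$. This $B(G)$ is separable, unital, nuclear, in the UCT class, and --- crucially --- genuinely simple and monotracial (these are nontrivial facts about Bernoulli actions of amenable groups, established in \cite{ORS:GAFA}). One then applies Corollary~\ref{AF-embeddable} to $B(G)$ rather than to $\mathrm{C}^{*}_{\mathrm{r}}(G)\otimes\Q$. The device you were reaching for (pass to a simple monotracial nuclear UCT algebra containing $\mathrm{C}^{*}_{\mathrm{r}}(G)$ trace-compatibly) is the right idea, but tensoring by $\Q$ does not accomplish it; one needs the dynamical construction from \cite{ORS:GAFA} instead.
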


\begin{proof}
By \cite[Proposition~2.1]{ORS:GAFA}, $\mathrm{C}^{*}_{\mathrm{r}}(G)$ embeds into a separable, unital, simple, nuclear, monotracial, UCT $\mathrm{C}^{*}$-algebra $B(G)$ in a fashion which picks up the canonical trace on $\mathrm{C}^{*}_{\mathrm{r}}(G)$.\footnote{$B(G)$ is the Bernoulli shift crossed product $\bigotimes_G(M_{2^\infty})\rtimes G$, from which the statement about the preservation of the canonical trace follows.}  The result then follows from Corollary \ref{AF-embeddable}.
\end{proof}

Strongly self-absorbing $\mathrm{C}^{*}$-algebras, whose study from an abstract perspective was initiated in \cite{TW:TAMS}, are omnipresent in the structure and classification theory of nuclear $\mathrm{C}^{*}$-algebras. These are simple, nuclear, with at most one trace (results that go back to \cite{EffRos:PJM}) and are $\mathcal Z$-stable by \cite{W:JNCG}. The only known examples are the Jiang--Su algebra $\mathcal Z$, UHF algebras of infinite type, the Cuntz algebras $\mathcal{O}_{\infty}$ and $\mathcal{O}_{2}$, and tensor products of $\mathcal{O}_{\infty}$ with UHF algebras of infinite type.
We are very interested in whether there are any other examples (cf.\ \cite[Question~5.11]{TW:TAMS}), mostly because strongly self-absorbing $\mathrm{C}^{*}$-algebras may be thought of as a microcosm of simple nuclear $\mathrm{C}^{*}$-algebras, and they witness important structural questions such as the UCT problem and the Blackadar--Kirchberg problem. When assuming the UCT we now have an answer, since in this case the possible $K$-groups were computed in \cite{TW:TAMS} and we may apply Corollary~\ref{monotracial-classification}:

\begin{cor}
The strongly self-absorbing $\mathrm{C}^{*}$-algebras in the UCT class are precisely the known ones. 
\end{cor}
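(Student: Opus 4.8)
The plan is to combine the classification result for simple, separable, unital, nuclear, $\mathcal{Z}$-stable $\mathrm{C}^*$-algebras in the UCT class with at most one trace (Corollary~\ref{monotracial-classification}, which itself is a consequence of Theorem~\ref{thm:MainThm} together with published results) with the $K$-theoretic computation of \cite{TW:TAMS}. The key structural facts we invoke about a strongly self-absorbing $\mathrm{C}^*$-algebra $\mathcal{D}$ are: $\mathcal{D}$ is separable, simple, unital and nuclear (by \cite{EffRos:PJM}); $\mathcal{D}$ has at most one trace (again \cite{EffRos:PJM}); and $\mathcal{D}$ is $\mathcal{Z}$-stable by \cite{W:JNCG}. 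Thus, once we also assume $\mathcal{D}$ satisfies the UCT, Corollary~\ref{monotracial-classification} applies: $\mathcal{D}$ is determined up to isomorphism by its ordered $K$-theory (and in the traceless case by its $K$-theory).

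The next step is to recall from \cite{TW:TAMS} that the $K$-theory of a strongly self-absorbing $\mathrm{C}^*$-algebra is severely constrained: $K_0(\mathcal{D})$ must be one of the rings arising as $K_0$ of the known examples --- namely $\mathbb{Z}$ (for $\mathcal{Z}$ and $\mathcal{O}_\infty$), $\mathbb{Z}/n\mathbb{Z}$ type quotients ruled out by unitality considerations, a localisation $\mathbb{Z}[S^{-1}]$ of $\mathbb{Z}$ at a set of primes (for UHF algebras of infinite type and their tensor products with $\mathcal{O}_\infty$), or $0$ (for $\mathcal{O}_2$) --- while $K_1(\mathcal{D})=0$ always; moreover the order structure and the position of $[1_{\mathcal{D}}]$ are pinned down, as is whether or not $\mathcal{D}$ has a trace. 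Each admissible pair of $K$-groups with order unit is realised by exactly one of the known strongly self-absorbing examples: $\mathcal{Z}$ and $\mathcal{O}_\infty$ both have $(K_0,K_1,[1])=(\mathbb{Z},0,1)$ but are distinguished by the order (stably finite versus purely infinite, equivalently presence or absence of a trace --- which is part of the Elliott invariant); a UHF algebra of infinite type has $K_0$ a nonzero proper subring of $\mathbb{Q}$ containing $\mathbb{Z}$ with the strict order, and tensoring with $\mathcal{O}_\infty$ gives the same $K_0$ with the trivial (purely infinite) order; and $\mathcal{O}_2$ is the unique one with $K_0=0$.

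Putting these together: given a strongly self-absorbing $\mathrm{C}^*$-algebra $\mathcal{D}$ in the UCT class, its Elliott invariant (ordered $K$-theory together with the tracial data, which here reduces to knowing whether a trace exists) coincides with that of one of the known examples $\mathcal{D}_0$; since both $\mathcal{D}$ and $\mathcal{D}_0$ are separable, simple, unital, nuclear, UCT, and have at most one trace, Corollary~\ref{monotracial-classification} yields $\mathcal{D}\otimes\mathcal{Z}\cong\mathcal{D}_0\otimes\mathcal{Z}$; but strongly self-absorbing $\mathrm{C}^*$-algebras are $\mathcal{Z}$-stable by \cite{W:JNCG}, so $\mathcal{D}\cong\mathcal{D}\otimes\mathcal{Z}\cong\mathcal{D}_0\otimes\mathcal{Z}\cong\mathcal{D}_0$. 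Conversely the known examples are strongly self-absorbing and lie in the UCT class. The only real subtlety --- and the step to be careful about --- is the bookkeeping of exactly which $(K_0,K_1,\text{order unit},\text{trace})$ quadruples occur and that distinct known examples genuinely have distinct such invariants (so that the classification gives a bijection rather than merely surjectivity); this is precisely the content of the computation in \cite{TW:TAMS}, so no new argument is required, but one should cite it carefully. No genuinely hard new input is needed beyond Corollary~\ref{monotracial-classification}.
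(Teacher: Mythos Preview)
Your proposal is correct and takes essentially the same approach as the paper: invoke the structural facts about strongly self-absorbing algebras (simple, nuclear, at most one trace from \cite{EffRos:PJM}; $\mathcal{Z}$-stable from \cite{W:JNCG}), apply Corollary~\ref{monotracial-classification}, and match invariants against the list computed in \cite{TW:TAMS}. The paper's argument is terser but identical in substance; your version just makes the $\mathcal{Z}$-stability cancellation step explicit.
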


\end{document}